\documentclass[11 pt]{amsart}
\usepackage[applemac]{inputenc}

\usepackage[left=30mm,right=30mm,top=30mm,bottom=30mm]{geometry}
\usepackage[dvipsnames,svgnames,x11names,hyperref]{xcolor}
\usepackage[hyphens]{url}
\usepackage[colorlinks=true,linkcolor=Maroon,citecolor=blue,urlcolor=blue,hypertexnames=false,linktocpage]{hyperref}
\usepackage{bookmark}
\usepackage{amsmath,thmtools,mathtools}

\mathtoolsset{showonlyrefs=true}
\usepackage{blindtext}
\newcounter{mparcnt}

\usepackage{fancyhdr}
\usepackage{esint}
\usepackage{enumerate}

\usepackage{pictexwd,dcpic}
\usepackage{graphicx}
\usepackage{caption}
\usepackage[dvipsnames]{xcolor}
\swapnumbers
\declaretheorem[name=Theorem,numberwithin=section]{thm}
\declaretheorem[name=Remark,numberwithin=section,style=remark,sibling=thm]{rem}
\declaretheorem[name=Lemma,sibling=thm]{lemma}
\declaretheorem[name=Proposition,numberwithin=section,sibling=thm]{prop}
\declaretheorem[name=Definition,style=definition,numberwithin=section,sibling=thm]{defn}
\declaretheorem[name=Corollary,numberwithin=section,sibling=thm]{cor}

\declaretheorem[style=remark,name=Remark,numbered=no]{remark}

\numberwithin{equation}{section}

\newcommand{\ti}{\tilde}

\newcommand{\wh}{\widehat}

\newcommand{\cn}{\colon}
\newcommand{\sub}{\subset}
\newcommand{\ov}{\overline}
\newcommand{\mr}{\mathring}

\newcommand{\8}{\infty}

\newcommand{\be}{\beta}
\newcommand{\ga}{\gamma}
\newcommand{\de}{\delta}

\newcommand{\ka}{\kappa}
\newcommand{\la}{\lambda}
\newcommand{\om}{\omega}
\newcommand{\si}{\sigma}
\newcommand{\Si}{\Sigma}

\newcommand{\ze}{\zeta}

\newcommand{\Om}{\Omega}
\newcommand{\De}{\Delta}
\newcommand{\Ga}{\Gamma}

\newcommand{\La}{\Lambda}

\newcommand{\cD}{\mathcal{D}}

\newcommand{\cL}{\mathcal{L}}

\newcommand{\cN}{\mathcal{N}}
\newcommand{\cO}{\mathcal{O}}

\newcommand{\cS}{\mathcal{S}}

\newcommand{\cU}{\mathcal{U}}

\newcommand{\bD}{\bar{D}}

\newcommand{\bL}{\bar{L}}
\newcommand{\bM}{\bar{M}}
\newcommand{\bN}{\bar{N}}

\newcommand{\bX}{\bar{X}}
\newcommand{\bY}{\bar{Y}}

\newcommand{\del}{\partial}
\newcommand{\n}{\nabla}
\newcommand{\II}[2]{\mrm{II}\br{#1,#2}}
\newcommand{\fa}{\forall}

\newcommand{\ip}[2]{\left\langle #1,#2 \right\rangle}
\newcommand{\fr}[2]{\frac{#1}{#2}}
\newcommand{\tfr}[2]{\tfrac{#1}{#2}}
\newcommand{\x}{\times}

\DeclareMathOperator{\graph}{graph}

\DeclareMathOperator{\const}{const}

\DeclareMathOperator{\tr}{tr}

\DeclareMathOperator{\Rc}{Rc}

\newcommand{\pf}[1]{\begin{proof}#1 \end{proof}}
\newcommand{\eq}[1]{\begin{equation}\begin{alignedat}{2} #1 \end{alignedat}\end{equation}}

\newcommand{\br}[1]{\left(#1\right)}
\newcommand{\abs}[1]{\lvert #1\rvert}
\newcommand{\enum}[1]{\begin{enumerate}[(i)] #1 \end{enumerate}}

\newcommand{\ra}{\rightarrow}

\newcommand{\mt}{\mapsto}

\newcommand{\mrm}{\mathrm}

\newcommand{\hp}{\hphantom}
\newcommand{\q}{\quad}

\newcommand{\pard}[2]{\frac{\partial #1}{\partial #2}}

\title[Foliation of null cones by STCMC surfaces]{Foliation of null cones by surfaces of constant spacetime mean curvature near MOTS}

\author{Ben Lambert}
\address{School of Mathematics, The University of Leeds, Leeds, LS2 9JT, UK}
\email{\href{mailto:b.s.lambert@leeds.ac.uk}{b.s.lambert@leeds.ac.uk}}
\author{Julian Scheuer}
\address{Goethe-Universit\"at Frankfurt am Main, Institut f\"ur Mathematik, Robert-Mayer-Str.~10, 60325 Frankfurt, Germany} 
\email{\href{mailto:scheuer@math.uni-frankfurt.de}{scheuer@math.uni-frankfurt.de}}
\date{\today}
\keywords{Null geometry; Spacetime mean curvature; Null mean curvature flow; Foliation; Prescribed curvature.}

\begin{document}

\begin{abstract}
Marginally Outer Trapped Surfaces (MOTS) in spacetimes are well-known to indicate the existence of black holes. Using flow techniques, we prove that a neighbourhood of a stable MOTS in a null cone may be foliated by hypersurfaces of constant spacetime mean curvature. We also provide methods to construct prescribed spacetime mean curvature surfaces within null cones.
\end{abstract}

\maketitle

\section{Introduction}

In a spacetime $(\bar M^{n+2},\bar g)$, marginally outer trapped surfaces (MOTS) are defined by the property, that one of the null expansions is constantly zero, indicating that possibly after time reversion the light rays emanating from this surface are not visible from the outside. Under natural assumptions on the spacetime, the famous Hawking Penrose singularity theorems state that the existence of a MOTS yields the existence of a black hole and hence it is of interest to detect whether a spacetime admits them. Tod \cite{Tod:/1991} suggested using the mean curvature flow to find MOTS by flowing hypersurfaces in a time-symmetric spacelike $(n+1)$-slice, as the time-symmetry reduces the problem to find minimal surfaces within this slice. In the non time-symmetric case, Tod suggested the {\it null mean curvature flow} within the spacelike time-slice and this strategy was implemented by Bourni--Moore \cite{BourniMoore:/2019}, who defined a weak null mean curvature flow via a level-set approach and found weak versions, so-called {\it generalised MOTS}. All of the discussed approaches employ curvature flows within a spacelike slice of the spacetime, where for the null mean curvature flow version the flow speed is induced by the codimension 2 geometry of the flowing surface, namely from the null expansions coming from $\bar L$ and $L$, which form a normalised null pair of the normal bundle with $\bar g(L,\bar L)=1$. To make this precise, the Gauss equation of a spacelike $n$-surface $\Si\sub \bar M^{n+2}$ is given by
\eq{\label{Gaussian formula}\bar D_{X}Y = D_{X}Y - \chi(X,Y)\bar L - \bar\chi(X,Y)L.} 
Then the null mean curvature flow of Bourni--Moore is given by
\eq{\del_{t}x = -(\tr \chi) \nu,}
we $\nu$ is a normal of $\Si$ within the spacelike $(n+1)$-slice. 

The first smooth mean curvature flow to locate MOTS was invented by Roesch and second author \cite{RoeschScheuer:02/2022}, who employed a flow within the null hypersurface generated by a spacelike $2$-surface. To highlight the crucial difference to the flow by Bourni--Moore, the evolution equation is
\eq{\label{NMCF}\del_{t}x = -(\tr\chi) \bar L,}
i.e. the flow moves within a null hypersurface. This approach seems more natural as the flow speed is aligned with the flow direction  and indeed, under fairly mild assumptions on the spacetime, it is proved in \cite{RoeschScheuer:02/2022}, that this flow is able to detect MOTS smoothly.
Subsequently, Wolff has found interesting new properties of the flow \eqref{NMCF}, for example that in the standard null cone of Minkowski space, the induced metrics of the flowing surfaces move by Yamabe flow \cite{Wolff:04/2023}. 

The fact that \eqref{NMCF} defines a flow in null hypersurfaces which is able to detect MOTS in a spacetime, leads to the following natural question, which will be addressed in this paper:
\begin{center}Under which conditions can a neighbourhood of a MOTS in a null hypersurface be foliated by hypersurfaces of constant {\it spacetime mean curvature} (STCMC)? 
\end{center}
Here a $\lambda$-STCMC hypersurface satisfies

\eq{\abs{\vec H}^{2} = 2\theta\bar\theta = \la}
for some constant $\la$, and where 
\eq{\theta = \tr \chi,\q \bar\theta = \tr \bar\chi.}
Foliations by hypersurfaces of constant mean curvature in the ends of Riemannian manifolds have been extensively studied, and are fundamental to the famous definition of centre of mass in General Relativity given by Huisken--Yau \cite{HuiskenYau:/1996}, see for example \cite{BrendleEichmair:09/2014,EichmairKoerber:11/2024,Huang:12/2010,Ma:10/2011} and references therein (we do not attempt to include a complete bibliography here). In a semi-Riemannian context, under suitable hypotheses, global foliations by CMC hypersurfaces were shown to exist by Gerhardt \cite{Gerhardt:/2000,Gerhardt:09/2000, Gerhardt:/2006d, Gerhardt:12/1983}. The search for foliations of initial data sets by surfaces of constant spacetime mean curvature has received some attention in the recent years, as such foliations can also conveniently be used to define centres of mass for isolated systems. This was first observed by Cederbaum--Sakovich \cite{CederbaumSakovich:/2021} who proved the existence and uniqueness of such foliations for ends of asymptotically flat initial data sets under some structural assumptions. Kr\"oncke--Wolff \cite{KronckeWolff:12/2024} extended this to find $\lambda$-STCMC foliations on the ends of asymptotically Schwarzschildean light cones. Inspired by this use of STCMC surfaces, Huisken--Wolff \cite{HuiskenWolff:07/2025} defined an inverse spacetime mean curvature flow and constructed weak solutions.

Roughly stated, in this paper we answer the question above by proving the existence of a foliation by STCMC hypersurfaces of a null cone near a MOTS, provided the MOTS is stable in the sense of a suitable stability/Jacobi operator, see Definition \ref{def:stableetc} for details. In the following we state our main results, but for better readability occasionally refer to later sections for some precise definitions.

\subsection*{Main results}\addcontentsline{toc}{section}{Main results}

We state our main theorem.
 
\begin{thm}\label{thm:FoliationExistence}
Let $n\geq 2$ and $\bar N$ be a null cone on a stable MOTS $\Si_{0}\sub \bar N$ in a spacetime $(\bar M^{n+2},\bar g)$.
Define
\begin{flalign*}
\sigma &= \sup\{\kappa\geq 0\cn \text{There exists a strictly increasing smooth foliation of a future region of $\Si_{0}$}\\ 
&\qquad\qquad\qquad\text{by stable $\lambda$-STCMC hypersurfaces}~\text{with}~\la\in [0,\kappa]\}.
\end{flalign*}
Then the following statements hold.
\enum{
\item $\sigma>0$.
\item Either the foliation leaves every compact subset of $\bar N$, or $\limsup_{\lambda \ra \sigma}|A^{\Sigma_\lambda}|\ra \infty$,  or there is a smooth limit leaf $\Sigma_\sigma$ which is not stable. 
\item The foliation is unique in the sense that for any $\lambda$-STCMC surface $\widetilde{\Sigma}_\lambda$ with $0\leq \lambda<\sigma$ and $\widetilde{\Sigma}_\lambda\subset \cup_{0\leq \kappa < \sigma} \Sigma_\kappa$, there holds $\widetilde{\Sigma}_\lambda=\Sigma_\lambda$.
}
\end{thm}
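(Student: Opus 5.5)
We parametrise hypersurfaces of the null cone $\bar N$ near $\Si_0$ as graphs over $\Si_0$ along the null generators, $\Si_u=\{\Phi(s,y)\cn s=u(y),\ y\in\Si_0\}$, where $\Phi(s,\cdot)$ is the flow along $\bar L$ (affinely parametrised, say). In this gauge $\theta[u]$ is a quasilinear second-order elliptic operator in $u$, while $\bar\theta[u]$ involves only first derivatives of $u$ and the ambient data. Since $\Si_0$ is a MOTS we have $\theta[0]=0$, and we may assume $\bar\theta[0]\neq 0$ with a fixed sign (part of the null cone / MOTS setting). Define
\[
F(u,\la)=2\theta[u]\bar\theta[u]-\la .
\]
Then $F(0,0)=0$. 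Because $\theta[0]=0$, the linearisation is $D_uF(0,0)v=2\bar\theta[0]\,L_{\Si_0}v$, where $L_{\Si_0}$ is the stability (Jacobi) operator of Definition \ref{def:stableetc}. Stability of $\Si_0$ means $L_{\Si_0}$ has positive principal eigenvalue, so it is invertible $C^{2,\alpha}\to C^{\alpha}$; the same then holds after multiplying by the nonvanishing function $\bar\theta[0]$. The implicit function theorem therefore gives a smooth curve $\la\mapsto u_\la$ for $\la\in[0,\ka_0)$ with $F(u_\la,\la)=0$.

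Differentiating in $\la$ gives $D_uF\,\dot u_\la=1$. Since the principal eigenvalue of the linearised operator is positive (maximum principle / Krein--Rutman), the inverse is positivity preserving, so $\dot u_\la>0$ with the correct sign convention. Hence the leaves $\Si_\la=\graph u_\la$ are strictly increasing and foliate a future region. Stability is an open condition on the principal eigenvalue, so it persists for small $\la$. This proves (i), $\sigma>0$. The same argument applied at any stable $\la$-STCMC leaf allows the foliation to be continued beyond that leaf.

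For (ii), suppose the foliation neither leaves every compact set nor has $|A^{\Si_\la}|$ blowing up as $\la\to\sigma$. Then we have uniform $C^2$ bounds on $u_\la$, and since the equation is quasilinear elliptic with $\bar\theta$ controlled away from zero, Schauder theory gives $C^{k,\alpha}$ bounds. Monotonicity of $u_\la$ yields a smooth limit $\Si_\sigma$, which is $\sigma$-STCMC. If $\Si_\sigma$ were stable, the continuation argument above would extend the foliation past $\sigma$, contradicting the definition of $\sigma$. Hence the limit leaf is not stable.

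The main obstacle is keeping $\bar\theta$ bounded away from zero along the family, so that the equation stays elliptic and the linearisation is a positive multiple of a stability-type operator. I would handle this by adding it to the definition of stability of the leaves (the operator $D_uF$ being invertible with positive principal eigenvalue) and propagating it by continuity.

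For (iii), let $\ti\Si_\la$ be $\la$-STCMC and contained in the foliated region. Take the maximal $\ka$ with $\ti\Si_\la\cap\Si_\ka\neq\emptyset$ and minimal $\ka'$ with the same property. At a touching point where $\ti\Si_\la$ lies below $\Si_{\ka}$, the comparison principle for the operator $u\mapsto 2\theta\bar\theta$ gives $\la\le\ka$. Here we use that $2\theta\bar\theta$ is elliptic increasing in the second-order term, with lower-order terms handled since both surfaces satisfy their equations at the point. The symmetric argument at the minimal leaf gives $\la\ge\ka'$. Since $\ka'\le\ka$, ordering of the leaves forces $\ka=\ka'=\la$, so $\ti\Si_\la=\Si_\la$, with the strong maximum principle giving equality when they touch.
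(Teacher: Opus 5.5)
Your argument for (iii) fails as written because the comparison runs the wrong way. The proof of \autoref{lem:ellipticSMP} (via \eqref{eq:omegaij}) gives $|\vec H|^2=2\bar\theta\theta+|\nabla\omega|^2\bar\theta^2-2\bar\theta\Delta\omega$. So $|\vec H|^2$ is \emph{decreasing} in the Hessian of the graph function. Equivalently, its linearisation $2\bar\theta\mathcal{L}_\Sigma$ has principal part $-2\bar\theta\Delta$, which is exactly what your part (i) needs for $\mathcal{L}_\Sigma^{-1}$ to preserve positivity. Your claim that $2\theta\bar\theta$ is ``increasing in the second-order term'' contradicts your own (i).

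Consequently, where the highest leaf $\Sigma_\kappa$ touches $\tilde\Sigma_\lambda$ from above (same height and gradient, $D^2\omega_\kappa\geq D^2\tilde\omega$), you get $\kappa\leq\lambda$. At the lowest leaf you get $\lambda\leq\kappa'$. The inequalities you state, $\kappa'\leq\lambda\leq\kappa$, are perfectly consistent with $\kappa'\leq\kappa$, so your closing sentence does not follow even from your own premises. With the correct signs, $\kappa\leq\lambda\leq\kappa'\leq\kappa$. Hence every point of $\tilde\Sigma_\lambda$ lies on $\Sigma_\lambda$ and $\tilde\Sigma_\lambda=\Sigma_\lambda$; no strong maximum principle is needed. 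This is the paper's argument, phrased there via \autoref{lem:ellipticSMP} as $\lambda^{\mathrm{high}}<\lambda<\lambda^{\mathrm{low}}$ if $\tilde\Sigma_\lambda$ is not a leaf.

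Once this is repaired, the rest is sound, and your route to (i) genuinely differs from the paper's. The paper first builds a continuous family of leaves by running the flow \eqref{eq:CMCF} from a small perturbation of $\Sigma_0$ (\autoref{prop:SmallOscLTE}, \autoref{prop:openness}, with \autoref{lem:StabilitytoCMCunique} excluding gaps). Only afterwards does it apply the implicit function theorem for smoothness, excluding zero minima of $\partial_\lambda\omega$ given the monotonicity in $\lambda$ supplied by the flow.

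Your direct use of the implicit function theorem at $\Sigma_0$ is shorter and yields strict monotonicity on its own. It combines the theorem with positivity of $\mathcal{L}_\Sigma^{-1}$, which is immediate from the paper's definition of stability: if $\mathcal{L}_\Sigma v>0$ and $f>0$ with $\mathcal{L}_\Sigma f>0$, then $v/f$ cannot attain a nonpositive minimum. The local uniqueness in the implicit function theorem also handles gluing the continued family onto the old leaves in (ii).

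What the flow buys the paper is a priori control not tied to an implicit-function neighbourhood, which it exploits in \autoref{rmk:foliationsize} and \autoref{thm:PrescribedMeanCurvature}. Lastly, you should not modify the notion of stability to keep $\bar\theta$ away from zero. $\bar\theta>0$ on $\bar N$ is part of the null cone hypothesis, so it is automatically bounded below on compact sets.
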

\begin{remark}
Under reasonable conditions, we are able to show that curvature blow up as in part (ii) doesn't occur. See \autoref{rmk:foliationsize} for more details.
\end{remark}

We briefly explain some terminology used above. We call $\bar N$ a null cone on a MOTS $\Si_{0}$, if $\Si_{0}$ is a spacelike codimension 2 surface of $\bar M^{n+2}$ and a null basis $\{L,\bar L\}$ of the normal bundle can be chosen, such that $\bar L$ is future-directed, $L$ is past directed, \eqref{Gaussian formula} holds and
\eq{\theta:=\tr_{\Si_{0}}\chi = 0,\q \bar\theta := \tr_{\Si_{0}}\bar\chi >0.}%
  We give a detailed account in \autoref{sec:null geometry}.
  The MOTS $\Si_{0}$ is called {\it stable}, if there exists a positive function $f$ on $\Si_{0}$, such that 
  \eq{\cL_{\Si_{0}}f:=-\De f - 2\tau(\n f) +fB > 0,}
  where $\tau$ and $B$ are geometric quantities combined from extrinsic and intrinsic geometry of $\Si_{0}$ and $\bar N$, for details see \eqref{eq:stabilityquantity}. This notion of stability was inspired by the stability of a CMC hypersurface of Euclidean space, which in this setting would be an equivalent notion. Further similar notions of stability of MOTS were discussed in \cite{AnderssonMarsSimon:/2008, KronckeWolff:12/2024}.
  
  To understand the statement about the foliation and the definition of $\si$, we note that our null cones without loss of generality are of the form
  \eq{\bar N = [0,\La)\x \cS_{0},}
  where for the $s$-coordinate $s\in [0,\La)$, $\del_{s}$ is a null vector and $\cS_{0}$ is a compact base manifold, which in case of the above theorem may as well coincide with $\Si_{0}$. By increasing foliation we then mean, that all leaves $\Si_{\la}$ of the foliation are given by spacelike graphs 
  \eq{\Si_{\la} = \{(\om(z,\la),z)\cn z\in \Si_{0}\}}
   over $\Si_{0}$ and $\del_{\la}\om >0 $.

\begin{remark}
It is possible that the foliation may be extended to mean curvatures beyond $\sigma$. For example the (rotationally symmetric) STCMC slices of the standard null cone in Schwarzschild space, when written as a graph $s=\omega(z)$,  have spacetime mean curvature $|\vec{H}|^2(s)=s^{-2}(1-\frac{2M}{s})$. Here the MOTS is at $s=2M$ and this remains stable until $s=3M$.
\end{remark}

The strategy of the proof is to construct the leaves of the foliation by running curvature flows $(M_{t})_{t>0}$ in the null cone defined by
\eq{\del_{t}x = \br{\fr{\la}{2\bar\theta}-H}\bar L,}
where the {\it mean curvature of $M_{t}$}, $H = \theta_{|M_{t}}$ is as in \eqref{Gaussian formula} and the flow is started from a hypersurface $M_{0}$ in the future of $\Si_{0}$ which has strictly larger mean  curvature. This property is crucial to ensure the existence of barriers and the existence of $M_{0}$ is guaranteed by the stability of $\Si_{0}$. This $\la$-family of flows will then satisfy smooth estimates, which are uniform in time and $\la$. A few further arguments yield the desired foliation.

As a side product of our techniques we solve another geometric problem in null hypersurfaces. Crucially, our techniques described above do not seriously depend on the structure of the forcing term $\be=\fr{\la}{2\bar\theta}$. Instead we can allow very general $\be$  and this gives us the opportunity to solve the prescribed spacetime mean curvature problem.

We prove the following:

\begin{thm}\label{thm:PrescribedMeanCurvature} 
Let $n\geq 2$ and $\bar N$ be a null cone on $\cS_{0}$. Throughout $\bar N$, let $0\leq c_{\mr{\bar{\chi}}}, c_R, C_R$ satisfy
\begin{equation}|\mr{\bar{\chi}}|\leq c_{\mr{\bar{\chi}}}\bar{\theta}, \qquad 0\leq c_R \bar{\theta}^2\leq \ov{\Rc}(\bar{L},\bar{L})\leq C_R \bar{\theta}^2\label{eq:Nestimatedbytheta}.
\end{equation} 

Then there exist constants $C_0(n),C_1(n)\geq 0$, such that the inequality
 \eq{c_{\mr{\bar{\chi}}}+C_R<C_0+C_1c_R}
implies the existence of an explicit constant, $\mathcal{D}(n,c_R,c_{\mr{\bar{\chi}}}, C_R-c_R)$ which is smooth in its last three entries, where 
\eq{\mathcal{D}(n,c_R,0,0)= \tfrac{n^2}{(1+nc_R)^2}\left(\tfrac{(n-2)(n-10)}{4n^2} + \tfrac{n+6}{n}c_R+c_R^2\right),} 
such that the following holds:
Let $\rho\cn \bar{N}\ra \mathbb{R}$ be a smooth function, such that $\rho\geq\abs{\vec H}^{2}$ on $\cS_{0}$ and suppose there is a hypersurface $\Sigma^+$ to the future of $\cS_{0}$ with the property $\rho\leq|\vec{H}|^{2}$ on $\Sigma^+$. If $\mathcal{D}<0$, then we additionally suppose 
\eq{\sup_{\Sigma^+}\omega < \frac{n}{1+nc_R}(e^{\frac{\pi}{\sqrt{-\mathcal{D}}}}-1).}
Then, the flow
\[\dot{x}=\br{\fr{\rho}{2\bar\theta}-H}\bar{L} \]
starting from $\Sigma^+$ exists for all times and converges smoothly to a smooth prescribed mean curvature surface $\Sigma$ with
\eq{|\vec{H}|^2=\rho.}
\end{thm}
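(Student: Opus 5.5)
We will write the evolving surfaces $M_t$ as graphs $s=\om(z,t)$ over $\cS_0$ in $\bar N=[0,\La)\x\cS_0$. Since $\bar L$ is proportional to $\del_s$, the flow $\dot x=\br{\fr{\rho}{2\bar\theta}-H}\bar L$ becomes a scalar quasilinear parabolic equation for $\om$. In this equation $H=\theta$ is, by the Gauss formula \eqref{Gaussian formula}, a uniformly elliptic second order expression in $\om$: up to lower order terms involving $\bar\chi$, $\mr{\bar\chi}$ and the torsion, it is $-\De\om$ with respect to the induced metric. Because $\bar N$ is null, every such graph is automatically spacelike, so there is no gradient degeneracy to worry about. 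Short time existence is then standard. The proof will consist of a priori estimates that are uniform in time, followed by a convergence argument.

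\textbf{Barriers and $C^0$ bounds.} The speed $F=\fr{\rho}{2\bar\theta}-H$, equivalently $\rho-\abs{\vec H}^2$ rescaled by $2\bar\theta$, satisfies a linear parabolic equation along the flow. This equation is obtained by differentiating $H$ and $\bar\theta$ in the $\bar L$-direction using the Raychaudhuri equation $\bar L\bar\theta=-\abs{\bar\chi}^2-\ov{\Rc}(\bar L,\bar L)$ together with the evolution of $\theta$. Initially $\rho\le\abs{\vec H}^2$ on $\Si^+$, so $F\le0$, and the maximum principle keeps $F\le0$. Hence the surfaces move monotonically to the past. In the other direction, $\cS_0$ has $\rho\ge\abs{\vec H}^2$, so it is a lower barrier. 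By the comparison principle for the scalar equation, $0\le\om\le\sup_{\Si^+}\om$ for all times.

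\textbf{Gradient and curvature estimates.} This is the main obstacle, and it is where the structural hypotheses \eqref{eq:Nestimatedbytheta} and the constant $\mathcal D$ enter. I would first try to bound $\abs{\n\om}$ by applying the maximum principle to a quantity of the form $\abs{\n\om}^2\varphi(\om)$ or $e^{\psi(\om)}\abs{\n\om}^2$. Along $\bar L$, $\bar\theta$ is controlled by the Raychaudhuri inequality $\bar\theta^2(\fr1n+c_R)\le -\bar L\bar\theta\le(\abs{\mr{\bar\chi}}^2/\bar\theta^2+\fr1n+C_R)\bar\theta^2$. Integrating this in $s$ gives $\bar\theta\approx \fr{n}{(1+nc_R)s}$-type behaviour. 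This explains the affine-like parameter appearing in the bound $\fr{n}{1+nc_R}(e^{\pi/\sqrt{-\mathcal D}}-1)$.

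In the maximum principle computation, the bad terms are quadratic in the gradient, with coefficients built from $\bar\chi$ and $\ov{\Rc}(\bar L,\bar L)$. The choice of $\psi$ then reduces to an ODE of Euler type, $\psi''+a\psi'^2+\dots$, in the variable $\log(1+\fr{1+nc_R}{n}\om)$ with discriminant $\mathcal D$. If $\mathcal D\ge0$, a positive solution exists globally. If $\mathcal D<0$, the solutions oscillate like $\sin(\sqrt{-\mathcal D}\,\cdot)$ and remain positive only on an interval of length $\pi/\sqrt{-\mathcal D}$, which is exactly the height restriction in the statement. The smallness condition $c_{\mr{\bar\chi}}+C_R<C_0+C_1c_R$ is what is needed to absorb the error terms by the good $\abs{\n\om}^4$-type term. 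Tracking the constants in this computation is the delicate step.

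\textbf{Higher regularity and convergence.} With $C^1$ bounds in hand, the equation is uniformly parabolic. Krylov--Safonov and Schauder theory, or equivalently a direct bound on $\abs A$ via its evolution equation, give uniform $C^{k}$ bounds. Since $\om$ is monotone in $t$ and bounded, it converges pointwise, and the uniform estimates upgrade this to smooth convergence. Finally, $\int_0^\infty\int \abs F\,d\mu\,dt<\infty$ follows from monotonicity. So $F\to0$, and the limit surface $\Si$ satisfies $\abs{\vec H}^2=\rho$.
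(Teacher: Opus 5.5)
Your outline follows the paper's structure. The speed $\beta-H$ obeys the linear equation \eqref{eq:evolf}, so the flow from $\Si^+$ moves monotonically to the past and stays above $\cS_0$. The gradient bound comes from $\phi=u\mu^{-2}(\om)$ as in \autoref{lem:GradEstTestFunctionHelperLemma}. Raychaudhuri pinches $Z/(Z+s)\le\bar\theta\le W/(W+s)$ with $W=n/(1+nc_R)$. Finally, $\mu$ comes from an Euler--Cauchy comparison in $\log(1+s/W)$.

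\textbf{The gap.} The step that carries the content of the theorem, the construction of $\mu$ (\autoref{lem:ODIsolution}), is only asserted, and the mechanism you describe would not work. At a maximum the quartic coefficient is $4\mu[\Phi\mu+\Psi\mu'+\mu'']$, where $\Psi=\tfrac{n-4}{2n}\bar\theta-\mr{\bar\chi}_{ij}\om^i\om^ju^{-1}$. This $\Psi$ depends on the direction of $\n\om$ and on the only pinched value of $\bar\theta$; one only knows it lies between $(\tfrac{n-4}{2n}- 2c_{\mr{\bar\chi}})\bar\theta$ and $(\tfrac{n-4}{2n}+ 2c_{\mr{\bar\chi}})\bar\theta$.

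You can replace $\Psi\mu'$ by $\tfrac{B}{s+W}\mu'$ only if $\mu'$ has a fixed sign. With $\mu'>0$ you must use the upper bound $B_1=W(\tfrac{n-4}{2n}+2c_{\mr{\bar\chi}})$ (when positive). With $\mu'<0$ you must use the lower bound $B_2=Z(\tfrac{n-4}{2n}-2c_{\mr{\bar\chi}})$. The Euler--Cauchy solution must then be positive \emph{and} monotone in that direction. For example, when $A>0$ and the exponents $\tfrac12(1-B\pm\sqrt{(B-1)^2-4A})$ are real, both have the sign of $1-B$.

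This is what the smallness hypothesis is for. The constants $C_0,C_1$ are chosen so that $2c_{\mr{\bar\chi}}\le c_R+\tfrac{6-n}{2n}$ for $n\le 6$, i.e.\ $B_1\le1$, and $2c_{\mr{\bar\chi}}+c_{\mr{\bar\chi}}^2+C_R\le\tfrac{n-6}{2n}$ for $n\ge7$, i.e.\ $B_2\ge1$. It is not there to absorb errors into a good quartic term. The $\cO(\phi^{3/2})$ terms are absorbed automatically once the bracket is uniformly negative, and the paper builds in $\delta>0$ for that.

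Because you never fix $A_0$ and $B$, you also never define $\mathcal{D}=(B-1)^2-4A_0$ or verify the stated value of $\mathcal{D}(n,c_R,0,0)$. That value follows by expansion from $W=Z$, $B=\tfrac{n-4}{2n}W$ and $A_0=W^2(n^{-2}-\tfrac{c_R}{2})$.

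\textbf{The height bound.} The missing sign constraint also means your explanation of the height bound is right only by accident. In $t=\log(1+s/W)$ the oscillatory solution is $(s+W)^{(1-B)/2}\sin(\tfrac{\sqrt{-\mathcal{D}}}{2}t+\eta)$, whose frequency is $\sqrt{-\mathcal{D}}/2$, not $\sqrt{-\mathcal{D}}$. Positivity alone would therefore allow $t$ up to $2\pi/\sqrt{-\mathcal{D}}$. The bound $\tfrac{n}{1+nc_R}(e^{\pi/\sqrt{-\mathcal{D}}}-1)$ is where the phase reaches $\pi/2$, i.e.\ where $\mu'$ loses its sign.

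Once the monotonicity requirement on $\mu'$ is built in, the rest of your plan goes through as in the paper: barriers, monotone motion, parabolic regularity and convergence to a stationary limit.
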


\begin{remark}
The case $c_{\mr{\bar{\chi}}}=0$ includes all rotationally symmetric null cones in rotationally symmetric spacetimes. Hence the geometric conditions above may be interpreted as the assumption that the ambient space is $C^2$ close to being rotationally symmetric. Furthermore, in the physically relevant cases where $n=2$ or $n=10$, the condition on $\sup_{\Sigma^+}\omega$ becomes vacuous in the rotationally symmetric situation and allows for large $\sup_{\Sigma^+}\omega$ if the ambient space is sufficiently close to rotationally symmetric. Finally we also note that for null hypersurfaces sufficiently close to rotationally symmetric hypersurfaces, increasing $c_{R}$ always weakens the hypotheses.
\end{remark}

\subsection*{Acknowledgments}
The authors would like to thank Wilhelm Klingenberg and Durham University for hosting various research visits where this work was initiated and completed. The second author would like to thank the first author and Leeds University for hosting a research visit, where parts of this work were written. The first author would like to thank the second author and Goethe--Universit\"at Frankfurt, where the proof of the first theorem was completed.

\section{Basic notions of null geometry}\label{sec:null geometry}

\subsection*{Null hypersurfaces}

We discuss special hypersurfaces of a Lorentzian manifold $(\bM^{n+2},\bar g)$, namely those which exhibit an everywhere degenerate induced metric. These are mostly referred to as {\it null hypersurfaces} in the literature. They carry some interesting and counterintuitive properties.

We denote by $\bD$ the Levi-Civita connection of $\bar g$ and $\bar\n$ denotes the gradient operator. For brevity we will mostly write $\ip{\cdot}{\cdot} = \bar g$.
Let us first recall, that every smooth hypersurface $\bN$ of a smooth manifold $\bM$ can locally be realised as the level set of a smooth function. Precisely, let $p\in \bN$, then there exists a neighbourhood $\cU\sub \bM$ of $p$ and a function $f\in C^{\8}(\cU)$, such that
\eq{\bN\cap \cU = \{f=0\}.}
This implies $T_{x}\bN = \ker df(x)$ and hence
\eq{(\bar\n f(x))^{\perp} = T_{x}\bN\q\fa x\in \bN\cap\cU. }
If $\bar{N}$ is a null hypersurface, the degeneracy of the induced metric $\bar g$ on $\bN$ implies that at every $x\in \bN$ there exists a nonzero null vector $\bar L\in T_{x}\bN$ with the property
\eq{\bar L^{\perp} = T_{x}\bar N.}
Hence we find
\eq{\bar\n f(x) \in T_{x}\bN,}
for otherwise the relation $\ip{\bar\n f(x)}{\bar L} = 0$ would imply $\bar L\in \ker \bar g$, which is impossible due to the non-degeneracy of $\bar g$. In addition, $\bar\n f$ and $\bar L$ are linearly dependent, which follows from the following lemma.

\begin{lemma}
On a vector space of dimension $n+1$ carrying a non-degenerate bilinear form $\bar g$ of signature $1$, every subspace $W$, such that $\bar g_{|W}=0$, is at most one-dimensional.
\end{lemma}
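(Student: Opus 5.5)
The plan is to argue by contradiction, using only linear algebra and the fact that a nondegenerate form of signature $1$ (one negative direction, $n$ positive directions) has a Lorentzian structure. Suppose $W$ is a subspace with $\bar g_{|W}=0$ and $\dim W\geq 2$. Then we can pick two linearly independent vectors $v,w\in W$. Both are null, and they are mutually orthogonal: $\bar g(v,v)=\bar g(w,w)=\bar g(v,w)=0$.

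The first step is to fix a basis adapted to $\bar g$. By Sylvester's law of inertia there is a $\bar g$-orthonormal basis $e_0,e_1,\dots,e_n$ with $\bar g(e_0,e_0)=-1$ and $\bar g(e_i,e_i)=1$ for $i\geq 1$. Write $v=v^0e_0+\vec v$ and $w=w^0e_0+\vec w$, where $\vec v,\vec w$ lie in the span $E$ of $e_1,\dots,e_n$, on which $\bar g$ is positive definite. Denote by $\ip{\cdot}{\cdot}_E$ and $\abs{\cdot}_E$ the Euclidean inner product and norm there.

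The nullity conditions then read $\abs{\vec v}_E=\abs{v^0}$ and $\abs{\vec w}_E=\abs{w^0}$, and orthogonality reads $\ip{\vec v}{\vec w}_E=v^0w^0$. By Cauchy--Schwarz in $E$,
\[
\abs{v^0w^0}=\abs{\ip{\vec v}{\vec w}_E}\leq \abs{\vec v}_E\abs{\vec w}_E=\abs{v^0}\abs{w^0},
\]
so equality holds in Cauchy--Schwarz. Note that $v^0\neq 0$, since otherwise $\abs{\vec v}_E=0$ and hence $v=0$; likewise $w^0\neq 0$. Equality in Cauchy--Schwarz therefore forces $\vec w=c\,\vec v$ for some real $c$. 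Comparing norms gives $\abs{c}=\abs{w^0/v^0}$, and the sign of the inner product fixes $c=w^0/v^0$. Hence $w=c\,v$, contradicting the linear independence of $v$ and $w$.

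The only delicate step is the equality case of Cauchy--Schwarz, in particular checking that it produces exactly $w=cv$, including the $e_0$-component. The remaining care is excluding degenerate cases where $v^0$ or $w^0$ vanishes, which is handled by the positive definiteness of $\bar g$ on $E$.

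As an alternative route, one can note that a totally null subspace $W$ satisfies $W\subset W^{\perp}$, so $\dim W\leq \dim W^{\perp}=n+1-\dim W$. Combined with the fact that $W\cap E=\{0\}$ (because $\bar g$ is positive definite on $E$, which has codimension one), this gives $\dim W\leq 1$ directly. This dimension-counting version is the cleanest argument to present.
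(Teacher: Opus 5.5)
Your proof is correct. Your closing ``alternative route'' is essentially the paper's proof: Sylvester's law gives an $n$-dimensional subspace $Z$ on which $\bar g$ is positive definite, so $W\cap Z=\{0\}$ and $\dim W+n=\dim(W+Z)\le n+1$. Your extra step $W\subset W^{\perp}$ is not needed: it only yields $\dim W\le (n+1)/2$, and the intersection argument alone finishes the proof.

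Your main argument is genuinely different. You fix an orthonormal basis and reduce the lemma to the pairwise statement that two mutually orthogonal null vectors are proportional. You extract this from the equality case of Cauchy--Schwarz on the positive definite part. You correctly rule out $v^0=0$, and you correctly use $\langle \vec v,\vec w\rangle_E=v^0w^0$ to pin down $c=w^0/v^0$, so the $e_0$-components also match.

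The coordinate proof is more explicit. It also matches exactly how the lemma is used right afterwards in the paper, namely on the orthogonal null vectors $\bar\nabla f$ and $\bar L$. However, it depends on there being a single negative direction. The paper's dimension count takes two lines and generalises immediately: for a nondegenerate form of index $k$, every totally null subspace has dimension at most $k$.
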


\pf{
By Sylvester's law of inertia, there exists an $n$-dimensional subspace $Z$ such that $\bar g_{|Z}$ is positive definite. Hence there holds $W\cap Z = \{0\}.$
However we also have
\eq{n+1\geq \dim(W+Z) = \dim W + n.}

}

 This behavior can be summarised by saying that the normal to a null hypersurface is also tangent. Under suitable assumptions as discussed later, this equips us with a well-defined, smooth global nonzero tangent field $\bar L$, which annihilates the tangent space. For $\bar X\in \Si^{\8}(\bN;T\bN)$, the latter being the space of smooth sections of the tangent bundle, we have
 \eq{0=\bar X \ip{\bar\n f}{\bar\n f}= {2}\ip{\bD_{\bX}\bar\n f}{\bar\n f}={2}\bar{D}^2 f(\bar{X},\bar{\n}f)&= {2}\bar{D}^2 f(\bar{\n}f,\bar{X})=2\ip{\bar X}{\bD_{\bar\n f}\bar\n f}  }
and it follows that $\bar D_{\bar\n f}\bar\n f$ is also a multiple of $\bar L$, which finally translates to
 \eq{\bar D_{\bar L}\bL = \ka\bL}
 for some function $\ka$. Thus, the flow generated by $\bL$ on $\bN$ is a flow of (pre-)geodesics and thus, in fact, the whole null hypersurface is ruled (i.e. foliated) by geodesics. In contrast to the non-degenerate case, the normal vector $\bL$ has no length and thus we cannot normalise it by division. Instead we use a different normalisation, namely $\bar L$ can be scaled to make $\ka = 0$. This follows from simple reparametrisation of the pregeodesics.
From now on we thus assume that $\bL$ has the property
\eq{\bD_{\bL}\bL = 0.}

The above flow construction yields the local structure of $\bN$.

\begin{prop}
Suppose $\bM$ is time-orientable, $\bar N\sub \bar M$ a null hypersurface, and let $\cS_{0}\sub\bN$ be a compact spacelike hypersurface. Then $\bN$  {\it splits around $\cS_{0}$}, i.e. there exists a diffeomorphism onto an open subset,
\eq{(\La_{-},\La_{+})\x\cS_{0}\ra \cN \sub\bN,}
where we use $s\in (\La_{-},\La_{+})$ as the canonical coordinate on this interval. Denoting by $\bar \ga$ the pullback metric on $\cN$ and by $\bar D^{\ast}$ the pullback connection, then $\del_{s}$ is a null vector and 
\eq{\bar D^{*}_{\del_{s}}\del_{s} = \bar D_{\bL}\bL = 0.}
Furthermore, $\bar L$ can be arranged future-directed. 
\end{prop}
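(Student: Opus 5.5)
The splitting comes from flowing $\cS_{0}$ along a suitably normalised null geodesic field. First I construct a smooth null section along $\cS_{0}$, then extend it off $\cS_{0}$ by the geodesic flow.

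\emph{Step 1: the null field along $\cS_{0}$.} Since $\bM$ is time-orientable, there is a global smooth future-directed timelike vector field $T$. At each $x\in\cS_{0}$ the spacelike hypersurface $\cS_{0}\sub\bN$ has a two-dimensional Lorentzian normal space $(T_{x}\cS_{0})^{\perp}$. It contains exactly two null lines, and by the discussion preceding the lemma one of them is the line $\ker\bar g_{|T_x\bN}$ spanned by $\bar\n f$. To get a global smooth choice, I take the null generator of $T_{x}\bN$, i.e.\ the radical of the degenerate induced metric, which is a smooth rank-one subbundle of $T\bN$. I then pick the representative $\bar L_{0}(x)$ with $\ip{\bar L_{0}}{T}=-1$. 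This fixes the scaling and makes $\bar L_{0}$ future-directed, and it is smooth because the normalisation is smooth and nonvanishing on null nonzero vectors. Compactness of $\cS_{0}$ ensures nothing degenerates.

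\emph{Step 2: the map $\Phi$.} Define $\Phi(s,x)=\exp_{x}(s\bar L_{0}(x))$. By compactness of $\cS_{0}$ and smooth dependence of geodesics on initial data, $\Phi$ is defined and smooth on $(\La_{-},\La_{+})\x\cS_{0}$ for some $\La_{-}<0<\La_{+}$. The curves $s\mapsto\Phi(s,x)$ are affinely parametrised null geodesics. The earlier computation $\bD_{\bar\n f}\bar\n f\parallel\bar L$ shows that the null generators of $\bN$ are pregeodesics. By uniqueness of geodesics with initial velocity $\bar L_{0}(x)\in T_{x}\bN$, these curves coincide (after reparametrisation) with the generators of $\bN$ through $x$, so $\Phi$ maps into $\bN$ for $\abs{s}$ small.

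\emph{Step 3: $\Phi$ is a diffeomorphism onto an open set.} The differential at $s=0$ is $(a,v)\mapsto a\bar L_{0}+v$. This is injective because $\bar L_{0}\notin T\cS_{0}$, since $\cS_{0}$ is spacelike and $\bar L_{0}$ is null. So $\Phi$ is a local diffeomorphism near $\{0\}\x\cS_{0}$, and dimensions match since $\dim\bN=n+1$. Injectivity on $\{0\}\x\cS_{0}$ together with compactness gives, by the standard tubular-neighbourhood argument, injectivity on $(\La_{-},\La_{+})\x\cS_{0}$ after shrinking the interval. Its image $\cN$ is then open in $\bN$. I expect this step to be the main, albeit routine, obstacle: one must argue by contradiction with sequences $(s_k,x_k)\neq(s_k',x_k')$, $s_k,s_k'\to0$, having equal images, extract convergent subsequences by compactness, and contradict local injectivity.

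\emph{Step 4: pullback properties.} By construction $\Phi_{*}\del_{s}=\dot\ga=:\bar L$ is null and tangent to $\bN$, hence it is the null normal field. Since $\ga$ is a geodesic, $\bar D^{\ast}_{\del_{s}}\del_{s}=\bar D_{\bL}\bL=0$. Future-directedness is preserved along each geodesic by continuity: $\ip{\dot\ga}{T}$ never vanishes for a nonzero causal vector and is negative at $s=0$.
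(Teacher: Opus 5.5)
Your proposal is correct and follows essentially the same route as the paper: flow $\cS_{0}$ along the affinely parametrised null generators of $\bN$ (so that $\bD_{\bL}\bL=0$), use compactness of $\cS_{0}$ to get a uniform parameter interval, and discard the rest of $\bN$. You add details the paper leaves implicit, namely the global future-directed normalisation $\ip{\bar L_{0}}{T}=-1$ via the time orientation and the compactness argument for injectivity of $\Phi$.
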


\pf{
The splitting follows since on a compact $\cS_{0}$ we can choose a uniform time interval for the geodesic flow, and we simply discard the other part of $\bN$.
}

From now on, if $\bN$ splits around some $\cS_{0}$, we already simply assume that $\bN$ is a product as above. However, the property $\ka=0$ still does not determine $\bL$ uniquely, as we have one more degree of freedom in choosing the initial velocity of the geodesic flow. This can be accomplished by requiring that along $\cS_{0}$ we have
\eq{\bar\theta:=-\sum_{i=1}^{n} \ip{\bD_{e_{i}}e_{i}}{\bar L}\equiv 1, }
provided this quantity is nowhere zero for one (and hence any) choice of $\bL$, and where $(e_{i})_{1\leq i \leq n}$ is an orthonormal frame of $\cS_{0}$. In this case we say that $\bN$ is {\it locally a null cone around $\cS_{0}$}, meaning that $\bar\theta(s,\cdot)>0$ for all $s$ sufficiently small, where $\bar\theta(s,\cdot)$ is defined analogously via an orthonormal frame of the $s$-slice $\{s\}\x \cS_{0}$. Hence in the following, we simply assume this property to hold for $\bar N$ and discard the past of $\cS_{0}$, i.e. without loss of generality be the null cone $\bar N$ identified with
\eq{\bar N \cong [0,\La)\x\cS_{0}}
and say that $\bar N$ is a {\it{null cone on $\cS_{0}$}.}

We then often refer to this splitting as the {\it canonical background foliation} of the null cone.
This fixing of $\bL$ can be used to unambiguously define the second fundamental form of $\bN$.

\begin{defn}
Suppose $\bN$ is a null cone on $\cS_{0}$. Then we define the {\it second fundamental form} of $\bar N$ by
\eq{\bar\chi(\bX,\bY) = \ip{\bar D_{\bX}\bL}{\bY}\q\fa \bX,\bY\in \Si^{\8}(\bN;T\bN).}
\end{defn}

\begin{rem}
Even if $\bN$ is not a null cone, we can define a second fundamental form. However, in this case the object is only defined up to multiplication by smooth functions on $\cS_{0}$.
\end{rem}

\subsection*{Spacelike graphs}
The splitting structure is well suited to describe spacelike graphs. Suppose that $M\sub \bN$ is a spacelike graph given by
\eq{M = \{x(z)=(\om(z),z)\cn z\in \cS_{0}\}.}
We recall the Gauss formula for vector fields $X,Y$ on $M$ and decompose the normal part $\II{X}{Y}$ conveniently: Let $D$ be the Levi-Civita connection of the metric $g = x^{\ast}\bar g$. Then
\eq{\label{eq:null Gauss}\bD_{Y}X = D_{Y}X + \II{X}{Y}=D_{Y}X - h(X,Y)\bL -  \bar\chi(X,Y) \nu,}
where the past-directed vector $\nu$ complements $\bar L$ in such a way that
\eq{\ip{\nu}{\nu} = 0,\q \ip{\nu}{\bar L} = 1,}
and where we, as commonly done, identify $X\in \Si^{\8}(M;TM)$ with its pushforward $x_{\ast}X$. 
We say that $\{\nu,\bar L\}$ is a {\it null pair} for $M$. 
Note that this terminology even makes sense if $M$ does not factor through a null cone a priori.

\begin{defn}
For a graph $M$ as given above, we define the {\it second fundamental form of $M$ in $\bN$} to be the bilinear form $h$. We also define 
\eq{H:=\tr_{g}h}
as the {\it mean curvature} of $M$ in $\bN$.
\end{defn}

The trace of $\bar\chi$ with respect to $g$ is more subtle.

\begin{lemma}
Let $M$ and $\ti M$ be two spacelike graphs, which intersect at a point $x_{0}\in \bN$. Then there holds
\eq{\tr_{g}\bar\chi|_{x_0} = \tr_{\ti g}\bar\chi|_{x_0}.} 
\end{lemma}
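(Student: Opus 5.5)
The idea is to exploit the degeneracy of $\bar\chi$ along $\bL$. Since $\bD_{\bL}\bL=0$ and $\ip{\bL}{\bL}=0$, for any $\bX\in T\bN$ we have $\bar\chi(\bX,\bL)=\ip{\bD_{\bX}\bL}{\bL}=\tfrac12\bX\ip{\bL}{\bL}=0$. Moreover $\bar\chi$ is symmetric on $T\bN$: it is the second fundamental form of the level set structure, i.e. for tangent vector fields $\ip{\bD_{\bX}\bL}{\bY}-\ip{\bD_{\bY}\bL}{\bX}$ vanishes, because $\bL$ is a multiple of $\bar\n f$ and $\bar D^{2}f$ is symmetric, while the extra term coming from the factor is proportional to $\ip{\bL}{\cdot}$ and hence vanishes on $T\bN$. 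So $\bar\chi$ is a symmetric bilinear form on $T_{x_0}\bN$ with $\bL$ in its kernel. It therefore descends to the quotient $Q:=T_{x_0}\bN/\mathbb{R}\bL$. The metric $\bar g$ restricted to $T_{x_0}\bN$ also has $\bL$ in its kernel, so it descends to $Q$ as well, and there it is positive definite because $\bar g$ is positive semidefinite on $T\bN$ with null space exactly $\mathbb{R}\bL$.

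Next I relate the traces on the two graphs to this common quotient. For any spacelike graph $M$ through $x_0$, $T_{x_0}M$ is an $n$-dimensional subspace of $T_{x_0}\bN$. It is transversal to $\bL$, since $\bL$ is null and $M$ is spacelike. Hence the projection $\pi\colon T_{x_0}M\to Q$ is a linear isomorphism. By construction of the descended forms it is an isometry, $g=\pi^{*}\bar g_Q$, and it carries $\bar\chi|_{T_{x_0}M}$ to $\pi^{*}\bar\chi_Q$. Take an orthonormal basis $(e_i)$ of $T_{x_0}M$. Then $(\pi e_i)$ is an orthonormal basis of $Q$, and
\[\tr_g\bar\chi=\sum_i\bar\chi(e_i,e_i)=\sum_i\bar\chi_Q(\pi e_i,\pi e_i)=\tr_{\bar g_Q}\bar\chi_Q,\]
which is independent of $M$. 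The same computation for $\ti M$ gives the identical quantity, proving the claim.

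Concretely, one can avoid the quotient language. If $\ti e_i$ is an orthonormal frame of $T_{x_0}\ti M$, write $\ti e_i=e'_i+a_i\bL$ with $e'_i\in T_{x_0}M$. Such a decomposition exists because $T_{x_0}\bN=T_{x_0}M\oplus\mathbb{R}\bL$. Then $\bar g(\ti e_i,\ti e_j)=g(e'_i,e'_j)$ and $\bar\chi(\ti e_i,\ti e_j)=\bar\chi(e'_i,e'_j)$, so $(e'_i)$ is orthonormal in $T_{x_0}M$ and the two traces coincide.

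The only genuine point to verify is $\bar\chi(\cdot,\bL)=0$ together with the symmetry of $\bar\chi$ on $T\bN$. The first follows directly from $\bD_{\bL}\bL=0$ and the nullity of $\bL$, as shown above. The second follows from $\bL$ being normal to $\bN$, so that $\ip{\bD_{\bX}\bL}{\bY}=-\ip{\bL}{\bD_{\bX}\bY}$. The difference of this expression under swapping $\bX$ and $\bY$ is $-\ip{\bL}{[\bX,\bY]}=0$, because $[\bX,\bY]$ is tangent to $\bN$. Everything else is linear algebra.
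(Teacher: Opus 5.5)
Your proof is correct and is essentially the paper's argument. The paper writes $x_i=\partial_i+\omega_i\bar L$ and $\tilde x_i=\partial_i+\tilde\omega_i\bar L$ in a common coordinate frame of $\mathcal{S}_0$, and uses that $\bar L$ is annihilated by $\bar g|_{T\bar N}$ and by $\bar\chi$ to get $g_{ij}=\tilde g_{ij}$ and $\bar\chi(x_i,x_j)=\bar\chi(\tilde x_i,\tilde x_j)$; this is your decomposition $T_{x_0}\bar N=T_{x_0}M\oplus\mathbb{R}\bar L$ written in coordinates instead of orthonormal frames or the quotient. One small point in your favour: the paper only cites $\bar\chi(\bar L,\cdot)=\langle\bar D_{\bar L}\bar L,\cdot\rangle=0$, but the expansion also needs $\bar\chi(\cdot,\bar L)=0$, which your nullity argument (or your symmetry argument) supplies explicitly.
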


\pf{
Here and in the following, for the differential of $\om$ we adopt the notation
\eq{\om_{i}:=\del_{i}\om.}
Fix a local coordinate frame $(\del_{i})$ on $\cS_{0}$ and write $x\cn \cS_{0}\ra M$ and $\ti x\cn \cS_{0}\ra \ti M$ for the embeddings of the graphs. Then $x_{i} := x_{\ast}\del_{i}$ and likewise for $\ti x$ give rise to coordinate representations $g_{ij}$ and $\ti g_{ij}$.
There holds
\eq{g_{ij} = \ip{x_{i}}{x_{j}} = \ip{\om_{i}\bar L + \del_{i}}{\om_{j}\bar L + \del_{j}}=\ip{\ti\om_{i}\bar L + \del_{i}}{\ti\om_{j}\bar L + \del_{j}} = \ti g_{ij},}
where we used that $\bar L$ annihilates everything in $T\bN$.
In addition there holds
\eq{\bar\chi_{ij}:=\bar\chi(x_{i},x_{j}) = \bar\chi(\om_{i}\bar L + \del_{i},\om_{j}\bar L+ \del_{j}) = \bar \chi(\ti x_{i},\ti x_{j}),}
since
\eq{\label{eq:barchibarL}\bar\chi(\bar L,\bar X) = \ip{\bD_{\bL}\bL}{\bX} = 0 \q\fa \bX\in \Si^{\8}(\bN;T\bN).}  
}

The following definition is hence sensible:

\begin{defn}
\enum{
\item
Suppose $\bar N$ is a null cone $\cS_{0}$. Then we define the mean curvature of $\bN$ in $\bM$ to be the function $\bar\theta\in C^{\8}(\bN)$ defined by 
\eq{\bar\theta(s,z):=\tr_{g_{s}}\bar\chi,}
where $g_{s}$ is the induced metric of the spacelike set $\{(s,z)\cn z\in \cS_{0}\}$.
\item For the constant graphs $\{s=\const\}$ we also reserve the notation $L$ for $\nu$, $\chi$ for $h$ and $\theta$ for $H$. 
}
\end{defn}

\begin{rem}
\enum{
\item
We also use the notation $L$, $\chi$ and $\theta$ in case that a spacelike submanifold is given in absence of a factorising null cone. The notation $\nu$, $h$ and $H$ is reserved to distinguish general graphs within a null cone from the respective quantities of the coordinate slices of the background foliation.
\item Since the quantity $\chi$ is defined on every leaf of the background foliation, it can be viewed as an element of $\Si^{\8}(\bar N;T^{0,2}\bar N)$, via the formula
\eq{\chi(\bar X,\bar Y) = \ip{\bar X}{\bar D_{\bar Y}L}.}
Where for vector fields on $\cS_{0}$ this tensor is the second fundamental of the $s$-slice $\{\om\equiv s\}$, it is left to identify its action on the $\bar L$ directions,
\eq{\label{eq:extendedchi}\chi(\bar L,\bar L) &=\ip{\bar L}{\bar D_{\bar L}L}= -\ip{\bar D_{\bar L}\bar L}{L} = 0,\\
 \chi(\bar L,\partial_i) &= -\ip{\bar D_{\partial_i}\bar L}{L} = -\zeta(\partial_i),\\
  \chi(\partial_i,\bar L) &= \ip{\partial_i}{\bar D_{\bar L}L}= -\ip{\bar D_{\bar L}\partial_i}{L} = -\ip{\bar D_{\partial_i}\bar L}{L} = -\zeta(\partial_i). }
  }
\end{rem}

Here we have introduced a new linear form. As it is common for higher codimensional submanifolds, we have to take {\it torsion} into account. Contrary to the hypersurface case, where the derivative of the normal is always tangent, in higher codimension there might be components in other normal directions. 
Hence we define two more quantities, the {\it torsion of the null cone} and the {\it torsion of a spacelike graph $M$},
\eq{\label{eq:defszetatau} \zeta(\bar X) &= \ip{\bD_{\bX}\bL}{L}\q\fa \bX\in \Si^{\8}(\bN;T\bN),\\
\tau(\bar X) &= \ip{\bar D_{\bar X}\bar L}{\nu}\q \fa \bar X\in \Si^{\8}(\cS_0;x^{*}T\bar N).}

\begin{rem}\label{rem:H2}
From \eqref{eq:null Gauss} we immediately obtain that the mean curvature vector of a spacelike graph $M\sub \bM$, which factors through a null cone $M\ra \bN\sub\bM$, is given by
\eq{\vec H = - H\bar L -  \bar\theta \nu }
and hence
\eq{\abs{\vec H}^{2} = 2H\bar\theta.}
\end{rem}

\begin{defn}
Within a null cone on $\cS_{0}$ we call a spacelike hypersurface a MOTS if $H=0$.
\end{defn}

\subsection*{A connection on the local null cone}
We have already introduced the tensor $\bar\chi \in \Si^{\8}(\bar N,T^{0,2}\bar N)$ and we will introduce several other such tensors later. As we have to differentiate them and as there is no canonical Levi-Civita connection on $\bar N$ induced from $\bar M$ due to the degeneracy of the induced metric, we introduce ad hoc a connection which makes use of the canonical background foliation of a null cone.

\begin{defn}
Suppose $\bar N$ is a null cone on $\cS_{0}$. Then we define
\eq{\hat D\cn \Si^{\8}(\bar N;T\bar N)\x \Si^{\8}(\bar N;T\bar N)&\ra \Si^{\8}(\bar N;T\bar N)\\
				(\bar X,\bar Y)&\mt \hat D_{\bar X}\bar Y = \bar D_{\bar X}\bar Y - \ip{\bar D_{\bar X}\bar Y}{\bar L}L. }
This connection is readily extended to arbitrary tensors by the standard Leibniz rule.
\end{defn}

\subsection*{Comparison formulae for spacelike graphs}
Using the parametrisation $x(z)=(\omega(z),z)$ and a local coordinate frame $\{\del_{i}\}$ to $\cS_{0}$, we have
\eq{x_i = \del_i+ \om_{i}\bar{L},}
which immediately implies
\eq{\ip{L\circ x}{x_i}&=\om_i  \label{eq:projL}.}
We let $\abs{\cdot}$ denote the norm induced on the graphs.
There holds
\begin{equation}
    \nu = L + \tfr 12|\n \om |^2 \bar{L} - \n \om = L-\tfr 12 \abs{\n \om}^{2}\bar L - \om^{i}\del_{i}, \label{eq:nuidentities}
\end{equation}
since we immediately observe $\ip{\bar{L}}{\nu}=1$, $\ip{\nu}{x_i}=\om_{i}-\om_{i}=0$ and $\ip{\nu}{\nu} = 0$ as required.
Putting these facts together we obtain
\eq{\label{eq:omegaij}D_{x_{j}x_{i}}\om = \del_{j}\om_{i} - \Ga^{k}_{ij}\om_{k} &= \ip{\bar D_{x_{j}}L}{x_{i}} + \ip{\nu - \tfr 12\abs{\n\om}^{2}\bar L}{\bar D_{x_{j}}x_{i}} = \chi_{ij} - h_{ij}+u\bar\chi_{ij},}
where we define
\eq{u:=- \ip{\nu}{L}=\tfrac 1 2|\n\om|^2\geq 0, \label{eq:defu}}
which is a geometric function capturing the gradient of our graphs.

 We note that
\begin{equation}\label{eq:tauasbackground}\tau(\bar X)=\ip{\bar{D}_{\bar X} \bar L}{L+\tfr 12|\n\om|^2\bar{L} -\n \om}=\zeta(\bar X)-\bar{\chi}(\bar X, \n \om)
\end{equation}
and also record the following identities for later use, where we use \eqref{eq:extendedchi} and \eqref{eq:nuidentities},
\eq{ \ip{\nu}{D_{\bar{L}}L} & = \zeta(\n \om)\label{eq:nuDbarLL},}
\eq{
\ip{\nu}{\bar{D}_{\bar X}L}&=\ip{u \bar{L}-\n \om}{\bar{D}_{\bar X}L}=-u\zeta(\bar X)-\chi(\bar X, \n \om)\q\fa \bar X\in \Si^{\8}(\bar N;T\bar N).\label{eq:nuDiL}
}

Finally, we need a lemma that relates the graph function with the connection on $\bar N$.

\begin{lemma}\label{lemma:connection}
For $T\in \Si^{\8}(\bar N;T^{0,1}\bar N)$ there holds
\eq{D_{x_{i}}T(x_{j}) = \hat D_{x_{i}}T(x_{j}) + T(\bar\chi_{ij}\n\om - (u\bar\chi_{ij} + h_{ij})\bar L).}
\end{lemma}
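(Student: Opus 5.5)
The plan is to reduce the identity to a comparison of the two connections acting on the tangent vector $x_{j}$. I will then compute that difference using the Gauss formula \eqref{eq:null Gauss} and the expression \eqref{eq:nuidentities} for $\nu$. There is no real analytic content here; the only care needed is bookkeeping of signs and of which object each ``$\n\om$'' denotes.

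\emph{Step 1: reduce to the vector fields.} By the Leibniz rule, on the one hand
\[(D_{x_{i}}T)(x_{j}) = x_{i}\big(T(x_{j})\big) - T(D_{x_{i}}x_{j}),\]
where $T$ is evaluated along $x$ on vectors in $x^{\ast}T\bar N$. On the other hand, by definition of the extension of $\hat D$ to tensors,
\[(\hat D_{x_{i}}T)(x_{j}) = x_{i}\big(T(x_{j})\big) - T(\hat D_{x_{i}}x_{j}).\]
The derivative terms cancel, so it suffices to show
\[\hat D_{x_{i}}x_{j} - D_{x_{i}}x_{j} = \bar\chi_{ij}\n\om - (u\bar\chi_{ij}+h_{ij})\bar L .\]
Here $\n\om=\om^{i}x_{i}$ is the gradient on $M$, pushed forward to $T\bar N$. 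Both sides are tangent to $\bar N$, so $T$ may be applied to them.

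\emph{Step 2: compute the difference.} I insert the Gauss formula
\[\bar D_{x_{i}}x_{j} = D_{x_{i}}x_{j} - h_{ij}\bar L - \bar\chi_{ij}\nu.\]
Pairing it with $\bar L$, and using $\ip{\bar L}{\bar L}=0$, $\ip{\nu}{\bar L}=1$ and $D_{x_{i}}x_{j}\perp\bar L$ (this holds because $\bar L$ annihilates $T\bar N$), gives
\[\ip{\bar D_{x_{i}}x_{j}}{\bar L}=-\bar\chi_{ij}.\]
Hence
\[\hat D_{x_{i}}x_{j} = \bar D_{x_{i}}x_{j}+\bar\chi_{ij}L = D_{x_{i}}x_{j} - h_{ij}\bar L - \bar\chi_{ij}(\nu - L).\]

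\emph{Step 3: substitute for $\nu$.} Using the first form of \eqref{eq:nuidentities}, $\nu - L = u\bar L - \n\om$ with $u=\tfr12\abs{\n\om}^{2}$. This gives
\[\hat D_{x_{i}}x_{j}-D_{x_{i}}x_{j} = \bar\chi_{ij}\n\om - (u\bar\chi_{ij}+h_{ij})\bar L,\]
which is the required identity.

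The only point requiring attention is the consistency of the two forms of \eqref{eq:nuidentities}. The first form contains $\n\om$ as a vector on $M$, equal to $\om^{i}\del_{i}+2u\bar L$. The second form contains $\om^{i}\del_{i}$ on $\cS_{0}$. Choosing the second form by mistake flips the sign of the $u\bar\chi_{ij}\bar L$ term, so I will use the first form, matching the statement.
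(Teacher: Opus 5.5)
Your proposal is correct and is essentially the paper's proof. The Leibniz rule reduces the claim to computing $\hat D_{x_i}x_j - D_{x_i}x_j$, which follows from the Gauss formula (giving $\ip{\bar D_{x_i}x_j}{\bar L}=-\bar\chi_{ij}$) together with $L-\nu=\n\om-u\bar L$ from \eqref{eq:nuidentities}. Your closing caveat is harmless: the two forms of \eqref{eq:nuidentities} describe the same vector once you use $\n\om=\om^{i}\del_{i}+2u\bar L$.
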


\pf{We also denote by $T$ restriction of $T$ to the spacelike graph $M = \graph\om$.
\eq{D_{x_i}T(x_j)&=x_{i}( T(x_j))-T(D_{x_i}x_j)\\
&=\hat{D}_{x_i} T(x_j)+T(\bar{D}_{x_i}x_j - \ip{\bar{D}_{x_i}x_j}{\bar{L}}L-D_{x_i}x_j)\\
&=\hat{D}_{x_i} T(x_j)+T(-\bar{\chi}_{ij}\nu-h_{ij}\bar{L} + \bar{\chi}_{ij}L)\\
&=\hat{D}_{x_i} T(x_j)+T(\bar{\chi}_{ij}(L-\nu)-h_{ij}\bar{L})\\
&=\hat{D}_{x_i} T(x_j)+T(\bar{\chi}_{ij}(\n \om-\tfr 12|\n \om |^2 \bar{L} )-h_{ij}\bar{L})\\
&=\hat{D}_{x_i} T(x_j)+T(\bar{\chi}_{ij}\n \om-(u\bar{\chi}_{ij}+h_{ij})\bar{L}).
}
}

We have obvious similar identities for higher order tensors.

\section{Evolution equations}

\subsection{Evolution equations for general speeds}
We study general evolutions of the form
\begin{equation}
\dot{x}(t,z)= f(t,z) \bar L(x(t,z)), \label{eq:genspeed}
\end{equation}
where
\eq{x\cn [0,T^{*})\x \cS_{0}\ra \bar N}
is a flow of graphs and a dot indicates the time derivative.

\begin{lemma}\label{lem:genevolomega}
Suppose that $x$ satisfies \eqref{eq:genspeed} in a null cone on $\cS_{0}$. Then the evolution of $\om$ is
\eq{\dot{\om} - \De\om = f - \theta + H - u\bar\theta.}
Let $\widetilde{\omega}$ be any smooth function on $\bar N$ with $\bar{L}(\widetilde{\omega})=0$. If $x$ satisfies \eqref{eq:genspeed} then
\eq{\dot{\tilde{\omega}}-\Delta\tilde{\omega} = -g^{ij}\hat{D}_{\del_i \del_j}\tilde{\omega}-\omega^i(\bar{\theta}\delta_i^k-2\bar{\chi}_{i}^{k})\tilde{\omega}_{k}.}
\end{lemma}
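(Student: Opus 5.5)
The plan is to parametrise the flow so that the base point is fixed, and then read off both identities from \eqref{eq:omegaij} and Lemma \ref{lemma:connection}. Since the speed in \eqref{eq:genspeed} is a multiple of $\bar L=\del_{s}$, a solution of the form $x(t,z)=(\om(t,z),z)$ keeps the $\cS_{0}$-component fixed. Differentiating in $t$ therefore gives $\dot\om = f$. No tangential reparametrisation is needed; if one prefers to start from a general solution, one first composes with a time-dependent diffeomorphism of $\cS_{0}$ to reach this form.

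\textbf{First identity.} For the Laplacian we take the $g$-trace of \eqref{eq:omegaij}:
\[
\De\om = g^{ij}\br{\chi_{ij}-h_{ij}+u\bar\chi_{ij}} = \theta - H + u\bar\theta .
\]
Here $\bar\theta = \tr_g\bar\chi$ is justified by the lemma showing that $\tr_g\bar\chi$ is independent of the graph. The term $\theta$ is understood as $g^{ij}\chi(x_i,x_j)$ with the extended $\chi$ of \eqref{eq:extendedchi}; I would keep track of this convention, since by \eqref{eq:extendedchi} it differs from the slice trace by $-2\zeta(\n\om)$. Subtracting gives $\dot\om-\De\om = f-\theta+H-u\bar\theta$.

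\textbf{Second identity.} Take $\ti\om$ with $\bar L(\ti\om)=0$ and regard $\ti\om\circ x$ as a function on $[0,T^{*})\x\cS_{0}$.
\begin{itemize}
\item \emph{Time derivative.} We have $\dot{\ti\om} = d\ti\om(f\bar L)=0$.
\item \emph{Reduction to $\hat D$.} Apply Lemma \ref{lemma:connection} with $T=d\ti\om$. Because $d\ti\om(\bar L)=0$, this gives
\[
D^{2}\ti\om(x_i,x_j) = \hat D d\ti\om(x_i,x_j) + \bar\chi_{ij}\,\om^{k}\ti\om_k .
\]
\item \emph{Expansion.} Write $x_i=\del_i+\om_i\bar L$ and expand $\hat D d\ti\om$ bilinearly. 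The mixed terms are computed from
\[
\hat D d\ti\om(\bar L,\del_j) = \bar L(\ti\om_j) - d\ti\om(\hat D_{\bar L}\del_j).
\]
Here $\bar L(\ti\om_j)=\del_j(\bar L\ti\om)=0$, since $[\bar L,\del_j]=0$. Moreover $\hat D_{\bar L}\del_j=\hat D_{\del_j}\bar L$ equals $\bar\chi_j^{\,k}\del_k$ modulo multiples of $\bar L$, because $\hat D$ is torsion-free on coordinate fields. Hence $\hat Dd\ti\om(\bar L,\del_j) = -\bar\chi_j^{\,k}\ti\om_k$. The same argument gives $\hat Dd\ti\om(\del_i,\bar L) = -\bar\chi_i^{\,k}\ti\om_k$.
\item \emph{Pure $\bar L$ term.} We have $\hat D d\ti\om(\bar L,\bar L) = \bar L\bar L\ti\om - d\ti\om(\bar D_{\bar L}\bar L)=0$.
\end{itemize}
Tracing with $g^{ij}$ then yields
\[
\De\ti\om = g^{ij}\hat D_{\del_i\del_j}\ti\om - 2\om^i\bar\chi_i^{\,k}\ti\om_k + \bar\theta\,\om^k\ti\om_k ,
\]
which rearranges to the claimed formula.

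The only delicate step is the bookkeeping in the expansion. One must identify $\hat D_{\del_i}\bar L$ correctly modulo $\bar L$: its tangential part is $\bar\chi_i^{\,k}\del_k$ with the index raised by $g$, and its $\bar L$-part is annihilated by $d\ti\om$. One must also not lose the factor $2$ from the two mixed terms. Everything else is direct substitution.
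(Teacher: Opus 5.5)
Your proposal is correct and follows essentially the same route as the paper. The first identity comes from tracing \eqref{eq:omegaij} and using $\dot\om=f$. The second comes from \autoref{lemma:connection} with $T=d\ti\om$, together with $\hat D_{\del_i}\bar L=\bar\chi_i^{k}\del_k+\zeta_i\bar L$ and $\hat D_{\bar L}\bar L=0$.

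Your caveat about $\theta$ is accurate. It must be read as $g^{ij}\chi(x_i,x_j)$ with the extended $\chi$, which differs from the slice trace by $-2\zeta(\n\om)$, and this matches how the paper treats $\chi_{ij}$ later when expanding $(\De\om)_k$ in the evolution of $u$.
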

\begin{proof}
The evolution of $\om$ follows immediately from \eqref{eq:omegaij}. 

We have $\dot{\tilde{\omega}}=0$. On the other hand, using \autoref{lemma:connection}, we have 
\eq{D_{x_i x_j}\tilde{\omega} = \hat{D}_{x_i x_j}\tilde{\omega}+\bar{\chi}_{ij}\omega^k\tilde{\omega}_{k}.}
Note that $\hat{D}_{\bar{L}}\bar{L}=0$ and 
\eq{\hat{D}_{\del_i}\bar{L}=\bar{\chi}^{k}_{i}\del_k+\zeta_{i}\bar{L},} so $\hat{D}_{\bar{L}\bar{L}}\tilde{\omega}=0$ and 
\eq{\hat{D}_{\del_i \bar L}\ti\om = \hat{D}_{\bar{L} \del_i }\tilde{\omega}=-\bar{\chi}^{k}_{i}\tilde{\omega}_{k}.} Therefore,
\eq{\De \ti\om = g^{ij} D_{ij}\tilde{\omega} = g^{ij}(\hat{D}_{\del_i \del_j}\tilde{\omega}-\bar{\chi}^{k}_{i}\tilde{\omega}_{k}\omega_j-\bar{\chi}^{k}_{j}\tilde{\omega}_{k}\omega_i+\bar{\chi}_{ij}\omega^k \tilde{\omega}_{k})=g^{ij}\hat{D}_{\del_i \del_j}\tilde{\omega}+\omega^i(\bar{\theta}\delta_i^k-2\bar{\chi}_{i}^{k})\tilde{\omega}_{k}.}
\end{proof}

\begin{lemma}\label{lem:ddtnu}
Suppose that $x$ satisfies \eqref{eq:genspeed} in a null cone on $\cS_{0}$. Then
\eq{\label{eq:ddtnu}\bar D_{\dot x} \nu = - \n f -f \tau^{k}x_k}
and
\eq{\label{eq:ddtugeneral}\dot{u} = \ip{\n f}{L} - f\bar\chi(\n\om,\n\om).}
\end{lemma}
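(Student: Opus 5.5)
The plan is to compute $\bar D_{\dot x}\nu$ by determining its components in the frame $\{x_k,\bar L,\nu\}$. The vector $\nu$ is characterised along the flow by three normalisations: $\ip{\nu}{\nu}=0$, $\ip{\nu}{\bar L}=1$ and $\ip{\nu}{x_i}=0$. Differentiating each of these in time will give one component of $\bar D_{\dot x}\nu$.

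\textbf{Step 1 (the $\bar L$- and $\nu$-components).} From $\ip{\nu}{\nu}=0$ we get $\ip{\bar D_{\dot x}\nu}{\nu}=0$, so there is no $\bar L$-component. From $\ip{\nu}{\bar L}=1$ we get
\[
\ip{\bar D_{\dot x}\nu}{\bar L}=-\ip{\nu}{\bar D_{\dot x}\bar L}=-f\ip{\nu}{\bar D_{\bar L}\bar L}=0,
\]
using $\bar D_{\bar L}\bar L=0$. Hence there is no $\nu$-component either, and $\bar D_{\dot x}\nu$ is tangent to $M_t$.

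\textbf{Step 2 (the tangential component).} Since $\dot x$ and $x_i$ come from coordinate fields on $[0,T^*)\x\cS_0$, they commute, so $\bar D_{\dot x}x_i=\bar D_{x_i}(f\bar L)=f_i\bar L+f\bar D_{x_i}\bar L$. Differentiating $\ip{\nu}{x_i}=0$ then gives
\[
\ip{\bar D_{\dot x}\nu}{x_i}=-f_i-f\ip{\nu}{\bar D_{x_i}\bar L}=-f_i-f\tau(x_i).
\]
Raising the index with $g$ yields \eqref{eq:ddtnu}. The only point needing care is that $\tau$ is evaluated on $x_i=\del_i+\om_i\bar L$. This is consistent with the definition of $\tau$, and in any case $\ip{\bar D_{\bar L}\bar L}{\nu}=0$, so the $\bar L$-part of $x_i$ does not contribute.

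\textbf{Step 3 (evolution of $u$).} Write $u=-\ip{\nu}{L}$, with $L$ evaluated at $x(t,z)$. Then
\[
\dot u=-\ip{\bar D_{\dot x}\nu}{L}-f\ip{\nu}{\bar D_{\bar L}L}.
\]
For the first term, insert \eqref{eq:ddtnu} and use \eqref{eq:projL}, which gives $\ip{x_k}{L}=\om_k$. This produces $\ip{\n f}{L}+f\tau(\n\om)$. By \eqref{eq:tauasbackground},
\[
\tau(\n\om)=\zeta(\n\om)-\bar\chi(\n\om,\n\om),
\]
where $\bar\chi(\n\om,\n\om)$ is unambiguous because $\bar\chi(\bar L,\cdot)=0$ by \eqref{eq:barchibarL}. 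For the second term, \eqref{eq:nuDbarLL} gives $\ip{\nu}{\bar D_{\bar L}L}=\zeta(\n\om)$. The two $\zeta$ terms cancel, leaving \eqref{eq:ddtugeneral}.

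I expect the main obstacle to be bookkeeping rather than any conceptual difficulty. One must keep straight whether $\n\om$ denotes the tangential gradient $\om^k x_k$ on $M$ or its projection to $\cS_0$; these differ only by a multiple of $\bar L$, which $\bar\chi$ and $\ip{\cdot}{L}$-pairings absorb as described. One must also fix the sign convention for $\zeta$ correctly so that the cancellation in Step 3 occurs.
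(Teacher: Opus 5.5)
Your proposal is correct and follows the paper's proof essentially step for step. You determine $\bar D_{\dot x}\nu$ by differentiating the normalisations $\ip{\nu}{x_i}=0$, $\ip{\nu}{\bar L}=1$ and $\ip{\nu}{\nu}=0$, using $\bar D_{\dot x}x_i=\bar D_{x_i}(f\bar L)$ and $\bar D_{\bar L}\bar L=0$; you then differentiate $u=-\ip{\nu}{L}$ and use \eqref{eq:projL}, \eqref{eq:tauasbackground} and \eqref{eq:nuDbarLL}, so that the $\zeta$-terms cancel exactly as in the paper.
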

\begin{proof}
    We have 
    \eq{\ip{\bar D_{\dot x} \nu}{x_i}=-\ip{\nu}{\bar D_{\dot x} x_i}=-\ip{\nu}{\bar D_{x_i}\dot x}=-\ip{\nu}{\bar D_{x_i}( f \bar{L})}=-df(x_i)-f\tau_i,}
    \eq{\ip{\bar D_{\dot x} \nu}{\bar{L}}=-\ip{\nu}{\bar D_{\dot x} \bar{L}}=0}
    and
    \eq{ 0=\frac{d}{dt}\ip{\nu}{\nu}=\ip{\bar D_{\dot x} \nu}{\nu}.}
    The first statement now follows. For $\dot u$ we compute
    \eq{\dot u = \ip{\n f + f\tau^{k}x_{k}}{L} - \ip{\nu}{\bar D_{\dot{x}}L} &= \ip{\n f}{L} + f\tau(\n\om) - f\ze(\n\om)=\ip{\n f}{L} - f\bar\chi(\n\om,\n\om),}
where we used \eqref{eq:extendedchi}, \eqref{eq:tauasbackground} and \eqref{eq:nuDbarLL}.
\end{proof}

For the following evolution equations we need to clarify our convention on the curvature tensor. We use the one from \cite{ONeill:/1983}, 
\eq{R(X,Y)Z = D_Y (D_X Z)-D_X(D_Y Z)+D_{[X,Y]}Z,}
which in coordinates reads
\eq{R^{m}_{lkj}x_{m} = R(x_{k},x_{j})(x_{l})  .}
We obtain the Gauss equation, \cite[p.~100, Thm.~5]{ONeill:/1983}
\eq{R^{m}_{lkj} &= \bar R^{m}_{lkj}+ \bar g(\mrm{II}_{kl},\mrm{II}^{m}_{j}) - \bar g(\mrm{II}_{jl},\mrm{II}^{m}_{k})=\bar R^{m}_{lkj} + \bar\chi_{kl}h^{m}_{j} + h_{kl}\bar\chi^{m}_{j} - \bar\chi_{jl}h^{m}_{k} - h_{jl}\bar\chi^{m}_{k},}
where $\mrm{II}$ is the full second fundamental form.

\begin{lemma}\label{lem:ddthijgij}
Suppose that $x$ satisfies \eqref{eq:genspeed} in a null cone on $\cS_{0}$. Then
\eq{\label{eq:genfddtgij}\dot{g}_{ij} = 2f\bar{\chi}_{ij},}
\eq{\label{eq:genfddthij}\dot{h}_{ij} 
=- D_{ij}f-df(x_j)\tau_i-df(x_i)\tau_j+f(\bar{\chi}_j^kh_{ki}-D_i\tau(x_j)-\tau_i\tau_j-\ip{\bar{R}(x_i,\bar L)x_j}{\nu})
} 
and
\eq{\label{eq:genfddtH}\dot{H} 
&=- \Delta f-2\tau(\n f)-f(\bar{\chi}^{ij}h_{ij}+D^i\tau_i+|\tau|^2+\ov{\Rc}(\bar{L}, \nu)+\ip{\bar{R}(\nu,\bar{L})\nu}{\bar{L}}).}
\end{lemma}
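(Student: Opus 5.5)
We argue directly from the definitions, working in the coordinate frame $x_i=\del_i+\om_i\bar L$ and using the null Gauss formula \eqref{eq:null Gauss}. The three statements are proved in order, each feeding into the next.

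\emph{Metric.} We differentiate $g_{ij}=\ip{x_i}{x_j}$ in time. Since $\dot x=f\bar L$ and coordinate derivatives commute, $\bar D_{\dot x}x_i=\bar D_{x_i}(f\bar L)=df(x_i)\bar L+f\bar D_{x_i}\bar L$. The term $df(x_i)\bar L$ is orthogonal to $T\bar N$, so
\[\dot g_{ij}=f\big(\ip{\bar D_{x_i}\bar L}{x_j}+\ip{\bar D_{x_j}\bar L}{x_i}\big)=2f\bar\chi_{ij},\]
using the symmetry of $\bar\chi$ and $\bar\chi(\bar L,\cdot)=0$ from \eqref{eq:barchibarL}. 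This gives \eqref{eq:genfddtgij}.

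\emph{Second fundamental form.} By \eqref{eq:null Gauss} we have $h_{ij}=-\ip{\bar D_{x_i}x_j}{\nu}$, since $\ip{\bar L}{\nu}=1$ and $\nu$ is null. Differentiating gives
\[\dot h_{ij}=-\ip{\bar D_{\dot x}\bar D_{x_i}x_j}{\nu}-\ip{\bar D_{x_i}x_j}{\bar D_{\dot x}\nu}.\]
\begin{itemize}
\item For the second term, insert \eqref{eq:ddtnu} and decompose $\bar D_{x_i}x_j=\Ga^k_{ij}x_k-h_{ij}\bar L-\bar\chi_{ij}\nu$. Because $\bar D_{\dot x}\nu$ is tangent to $M$, only the tangential part $\Ga^k_{ij}x_k$ contributes, producing $\Ga^k_{ij}(df(x_k)+f\tau_k)$.
\item For the first term, commute derivatives using the curvature convention: $\bar D_{\dot x}\bar D_{x_i}x_j=\bar D_{x_i}\bar D_{x_j}(f\bar L)+\bar R$-term, which gives $-\ip{\bar R(x_i,\bar L)x_j}{\nu}f$ up to the sign fixed by the O'Neill convention.
\item Expand $\bar D_{x_i}\bar D_{x_j}(f\bar L)$ and pair with $\nu$. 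Using $\ip{\bar D_{x_j}\bar L}{\nu}=\tau_j$ we obtain
\[\ip{\bar D_{x_i}\bar D_{x_j}(f\bar L)}{\nu}=\del_{ij}f+df(x_j)\tau_i+df(x_i)\tau_j+f\,\ip{\bar D_{x_i}\bar D_{x_j}\bar L}{\nu}.\]
\item For the last term, write $\ip{\bar D_{x_i}\bar D_{x_j}\bar L}{\nu}=x_i(\tau_j)-\ip{\bar D_{x_j}\bar L}{\bar D_{x_i}\nu}$.
\item Decompose $\bar D_{x_j}\bar L=\bar\chi_j^kx_k+\tau_j\bar L$. This holds because the $\nu$-component equals $\ip{\bar D\bar L}{\bar L}=0$.
\item Likewise $\bar D_{x_i}\nu=h_i^kx_k-\tau_i\nu$. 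The tangential part follows from the Weingarten relation $\ip{\bar D_{x_i}\nu}{x_k}=h_{ik}$, and the normal part from $\ip{\bar D_{x_i}\nu}{\bar L}=-\tau_i$.
\item Hence $\ip{\bar D_{x_j}\bar L}{\bar D_{x_i}\nu}=\bar\chi_j^kh_{ki}-\tau_i\tau_j$.
\end{itemize}
Combining $\del_{ij}f-\Ga^k_{ij}f_k=D_{ij}f$ and $x_i(\tau_j)-\Ga^k_{ij}\tau_k=D_i\tau(x_j)$ yields \eqref{eq:genfddthij}, once the overall signs and the $\pm\tau_i\tau_j$ bookkeeping are checked carefully.

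\emph{Mean curvature.} We have $\dot H=g^{ij}\dot h_{ij}-\dot g_{ik}g^{ij}g^{kl}h_{jl}=g^{ij}\dot h_{ij}-2f\bar\chi^{ij}h_{ij}$ by \eqref{eq:genfddtgij}. This combines with the $+f\bar\chi^k_jh_{ki}$ term from the trace of \eqref{eq:genfddthij} to leave $-f\bar\chi^{ij}h_{ij}$. It remains to trace the curvature term. Complete $\{x_i\}$ by the null pair $\{\nu,\bar L\}$; using $\bar g^{-1}=g^{ij}x_i\otimes x_j+\nu\otimes\bar L+\bar L\otimes\nu$ we get
\[g^{ij}\ip{\bar R(x_i,\bar L)x_j}{\nu}=\ov{\Rc}(\bar L,\nu)+\ip{\bar R(\nu,\bar L)\nu}{\bar L},\]
after subtracting the null-direction contributions (one of which vanishes by antisymmetry). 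This gives \eqref{eq:genfddtH}.

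The main obstacle is sign and convention bookkeeping: the O'Neill curvature convention, the commutator step, and the correct normal decompositions of $\bar D\bar L$ and $\bar D\nu$ all have to be tracked consistently. No conceptual difficulty is expected.
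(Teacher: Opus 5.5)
Your proposal is correct and follows essentially the same route as the paper: differentiate $g_{ij}$ and $h_{ij}=-\ip{\bar D_{x_i}x_j}{\nu}$, commute $\bar D_{\dot x}$ past $\bar D_{x_i}$ using the curvature tensor, insert \eqref{eq:ddtnu}, and trace the curvature term in a frame adapted to the null pair $\{\nu,\bar L\}$. The only differences are cosmetic. You evaluate $\ip{\bar D_{x_i}\bar D_{x_j}\bar L}{\nu}$ by moving the derivative onto $\nu$ via $\bar D_{x_i}\nu=h_i^kx_k-\tau_i\nu$, whereas the paper expands $\bar D_{x_i}(\bar\chi_j^kx_k+\tau_j\bar L)$ directly. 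You also trace with the null-frame inverse metric rather than the paper's vectors $\tfrac{1}{\sqrt2}(\bar L\pm\nu)$. The signs you left hedged do come out exactly as stated.
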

\begin{proof}
We compute
\eq{\dot{g}_{ij} = \ip{\bar D_{\dot{x}}x_i}{x_j}+\ip{x_i}{\bar D_{\dot{x}} x_j} = \ip{\bar D_{x_i}(f\bar{L})}{x_j}+\ip{x_i}{\bar D_{x_j}(f\bar{L})}=2f\bar{\chi}_{ij}.}
Using \autoref{lem:ddtnu}, we compute
\begin{flalign*}
\dot{h}_{ij} &= -\del_{t}\ip{\bar{D}_{x_i}x_j}{\nu}\\
&= -\ip{\bar{D}_{\dot{x}}(\bar{D}_{x_i}x_j)}{\nu} -\ip{\bar{D}_{x_i}x_j}{\bar{D}_{\dot{x}}\nu}\\
&= -\ip{\bar{D}_{x_i}(\bar{D}_{x_j}\dot{x})+\bar{R}(x_i,\dot{x})x_j}{\nu} -\ip{\bar{D}_{x_i}x_j}{\bar{D}_{\dot{x}}\nu}\\
&= -\ip{\bar{D}_{x_i}(df(x_{j})\bar{L} +f(\bar{\chi}_j^kx_k+\tau_{j}\bar{L}))}{\nu}-\ip{\bar{R}(x_i,\dot{x})x_j}{\nu} -\ip{\bar{D}_{x_i}x_j}{\bar{D}_{\dot{x}}\nu}\\
&= -\del_i(df(x_{j}))-df(x_{j})\tau_i-df(x_{i})\tau_j-f\ip{\bar{D}_{x_i}(\bar{\chi}_j^kx_k+\tau_{j}\bar{L})}{\nu}\\
&\qquad-\ip{\bar{R}(x_i,\dot{x})x_j}{\nu} -\ip{\bar{D}_{x_i}x_j}{\bar{D}_{\dot{x}}\nu}\\
&= -\del_i(df(x_{j}))-df(x_{j})\tau_i-df(x_{i})\tau_j+f\bar{\chi}_j^kh_{ki}-f\del_i(\tau_{j})-f\tau_i\tau_j\\
&\qquad-\ip{\bar{R}(x_i,\dot{x})x_j}{\nu} -\ip{\bar{D}_{x_i}x_j}{\bar{D}_{\dot{x}}\nu}\\
&=-D_{ij} f-df(x_{j})\tau_i-df(x_{i})\tau_j+f\bar{\chi}_j^kh_{ki}-fD_i\tau_{j}-f\tau_i\tau_j-f\ip{\bar{R}(x_i,\bar L)x_j}{\nu},
\end{flalign*}
which gives the second equation. To take the trace, we note that
\begin{flalign*}
\ip{\bar{R}(x_i,\bar{L})x^i}{\nu}&=\ov{\Rc}(\bar{L}, \nu) - \tfrac{1}{2}\ip{\bar{R}(\bar{L}+\nu,\bar{L})(\bar{L}+\nu)}{\nu} + \tfrac{1}{2}\ip{\bar{R}(\bar{L}-\nu,\bar{L})(\bar{L}-\nu)}{\nu}\\
&=\ov{\Rc}(\bar{L}, \nu) - \tfrac{1}{2}\ip{\bar{R}(\nu,\bar{L})(\bar{L}+\nu)}{\nu} - \tfrac{1}{2}\ip{\bar{R}(\nu,\bar{L})(\bar{L}-\nu)}{\nu}\\
&=\ov{\Rc}(\bar{L}, \nu) - \ip{\bar{R}(\nu,\bar{L})\bar{L}}{\nu}.
\end{flalign*}
Using this identity, along with the evolution of $g_{ij}$ and $h_{ij}$, we see 
\eq{
\dot{H} 
&=- \Delta f-2\tau(Df)+f(-\bar{\chi}^{ij}h_{ij}-D^i\tau_i-|\tau|^2-\ov{\Rc}(\bar{L}, \nu)-\ip{\bar{R}(\nu,\bar{L})\nu}{\bar{L}}).
}\end{proof}

\subsection{Evolution equations for PMCF}

We define the Prescribed Mean Curvature Flow (PMCF) to be 
\eq{\label{eq:PMCF}\dot{x}=(\beta-H)\bar{L}}
where $\beta\cn\bar N\rightarrow \mathbb{R}$ is a smooth function.
We start with a corollary of \autoref{lem:genevolomega}.

\begin{lemma}\label{lem:evolomegaPMCF}
Under \eqref{eq:PMCF}, the evolution of $\omega$ is
\eq{\dot{\om} - \De\om = \beta - \theta - u\bar\theta.}
For $\widetilde{\omega}$ any smooth function on $\bar{N}$ with $\bar{L}(\widetilde{\omega})=0$, the following evolution equation holds:
\eq{\dot{\tilde{\omega}}-\Delta\tilde{\omega} = -g^{ij}\hat{D}_{\del_i \del_j}\tilde{\omega}-\omega^i(\bar{\theta}\delta_i^k-2\bar{\chi}^{k}_{i})\tilde{\omega}_{k}.}
\end{lemma}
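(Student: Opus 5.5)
This is a direct specialisation of \autoref{lem:genevolomega} to the speed of \eqref{eq:PMCF}. Nothing new needs to be computed, but the steps are as follows.

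First I substitute $f=\beta-H$ into the first identity of \autoref{lem:genevolomega}:
\eq{\dot\om-\De\om = f-\theta+H-u\bar\theta = \beta - H - \theta + H - u\bar\theta = \beta-\theta-u\bar\theta.}
The only point to check is the meaning of the terms. Here $\theta$ and $\bar\theta$ are the background quantities of the slice $\{s=\om(z)\}$ through the point $x(t,z)$, and $\dot\om$ is the time derivative at fixed $z\in\cS_{0}$. Both conventions are the same as in \autoref{lem:genevolomega}, because the flow moves only in the $\bar L=\del_{s}$ direction. The cancellation of $H$ is exactly what makes this equation parabolic, with $\beta$ entering only as a lower-order term.

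Second, the equation for $\ti\om$ in \autoref{lem:genevolomega} does not involve $f$ at all. It uses only $\dot{\ti\om}=0$, which holds because $\dot x\parallel\bar L$ and $\bar L(\ti\om)=0$, together with \autoref{lemma:connection} to convert $D^{2}\ti\om$ into $\hat D^{2}\ti\om$. So it carries over verbatim with $f=\beta-H$.

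I expect no real obstacle here. The only place needing care is the sign convention in \eqref{eq:omegaij} when tracing to get $\De\om=\theta-H+u\bar\theta$. This relies on $\tr_{g}\chi=\theta$ and $\tr_{g}\bar\chi=\bar\theta$ on the graph, which hold by the lemma on independence of $\tr_{g}\bar\chi$ and the analogous computation for $\chi$.
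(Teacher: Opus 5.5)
Your reduction is exactly the paper's. There the lemma is recorded as an immediate corollary of \autoref{lem:genevolomega}: setting $f=\beta-H$ cancels $H$. The $\ti\om$-identity does not involve $f$, since it uses only $\dot{\ti\om}=0$ and \autoref{lemma:connection}. That part of your argument is fine.

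The problem is the one point you single out as delicate. It is not true that $\tr_{g}\chi=\theta$ on a graph ``by the analogous computation for $\chi$''. The lemma on $\tr_{g}\bar\chi$ works because $\bar\chi(\bar L,\cdot)=0$ by \eqref{eq:barchibarL}. By \eqref{eq:extendedchi}, however, the extended $\chi$ satisfies $\chi(\bar L,\del_i)=\chi(\del_i,\bar L)=-\zeta(\del_i)$, which is in general nonzero. The graph metric agrees with the slice metric at $(\om(z),z)$, $\zeta(\bar L)=0$, $x_i=\del_i+\om_i\bar L$ and $\n\om=\om^i\del_i+2u\bar L$. Together these give
\[g^{ij}\chi(x_i,x_j)=g^{ij}\chi(\del_i,\del_j)-2\om^i\zeta(\del_i)=\theta-2\zeta(\n\om).\]
The $\chi_{ij}$ in \eqref{eq:omegaij} is $\chi(x_i,x_j)$, so tracing gives $\De\om=g^{ij}\chi_{ij}-H+u\bar\theta$. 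You explicitly read $\theta$ as the background quantity $\theta(\om(z),z)$ of the slice; under that reading the correct identity is
\[\dot\om-\De\om=\beta-\theta+2\zeta(\n\om)-u\bar\theta.\]
The displayed formula is exact only if $\theta$ means $g^{ij}\chi_{ij}$, the trace along the graph of the extended tensor. That is how the paper itself uses it: in the proof of the $u$-evolution it writes $\De\om=u\bar\theta+g^{ij}\chi_{ij}-H$.

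The discrepancy is a first-order term and is harmless in every later application. It is absorbed into the $\cO(u^{3/2})$ errors of \autoref{lem:GradEstTestFunctionHelperLemma}, into $b$ in \autoref{lem:ellipticSMP}, and it cancels at touching points in \autoref{lem:SMPonomega}. Still, you should either adopt the graph-trace reading of $\theta$ or keep the $\zeta$-term, rather than assert $\tr_{g}\chi=\theta$.
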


Our proof is based on $C^{1}$-estimates. Hence we need evolution equations up to that order.

\begin{lemma}
Under \eqref{eq:PMCF}, the evolution of $u$ is
\eq{
\dot{u}-\Delta u	&=\be_{i}\om^{i}-\be\bar\chi_{ij}\om^{i}\om^{j} -u\bar\theta_{k}\om^{k}+2\zeta_{j}u^{j}-g^{ij}(\hat{D}_{\n\om}\chi)_{ij} -4u\bar{\chi}_{kj}\zeta^j\om^{k}-2\chi_{kj}\zeta^j\om^{k}\\
	&\hp{=} -\bar\theta u \bar\chi_{ij}\om^{i}\om^{j}-\bar\theta\chi_{ij}\om^{i}\om^{j}+2u\bar\chi_{lj}\bar\chi^{l}_{k}\om^{j}\om^{k} - 2\bar\chi_{ij}\om^{i}u^{j} - \abs{D^{2}\om}^{2} - \bar R^{i}_{lki}\om^{l}\om^{k}.
}

\end{lemma}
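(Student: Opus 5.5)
Since $u=\tfrac12\abs{\n\om}^2 = \tfrac12 g^{ij}\om_i\om_j$, the plan is to compute $\dot u$ and $\Delta u$ separately and subtract. For $\dot u$ I use the general formula \eqref{eq:ddtugeneral} from \autoref{lem:ddtnu} with $f=\be-H$:
\eq{\dot u = \ip{\n\be}{L}-\ip{\n H}{L} - (\be-H)\bar\chi(\n\om,\n\om).}
By \eqref{eq:projL}, $\ip{\n\be}{L}=\be_i\om^i$, where $\be_i=d\be(x_i)$, and similarly $\ip{\n H}{L}=H_i\om^i$. The terms $\be_i\om^i-\be\bar\chi_{ij}\om^i\om^j$ are then exactly the first two terms of the claim. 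What remains of $\dot u$ is $-H_k\om^k + H\bar\chi_{ij}\om^i\om^j$.

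For $\Delta u$ I use the Bochner identity on the graph:
\eq{\Delta u = \abs{D^2\om}^2 + \om^k D_k\Delta\om + R_{lk}\om^l\om^k.}
The Ricci tensor is rewritten with the Gauss equation stated before \autoref{lem:ddthijgij}. Tracing $R^m_{lkj}$ over $m=j$ gives $\bar R^i_{lki}$ plus quadratic terms $\bar\chi_{kl}H + h_{kl}\bar\theta - 2\bar\chi^m_{(k}h_{l)m}$, up to sign conventions. For the third-order term I differentiate \eqref{eq:omegaij},
\eq{\Delta\om = \theta - H + u\bar\theta,}
where $\theta$ and $\bar\theta$ are regarded as functions on $\bN$ restricted to the graph. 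This gives
\eq{\om^k D_k\Delta\om = \om^k D_k\theta - \om^kH_k + \bar\theta\,\om^k u_k + u\,\om^k D_k\bar\theta.}
The $\om^kH_k$ term cancels against the one in $\dot u$. This cancellation is the point of the computation, since it removes all third derivatives of $\om$.

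The remaining step is to express $D_k\theta$ and $D_k\bar\theta$ along the graph through the background connection $\hat D$. Here $\theta=g^{ij}\chi_{ij}$ is evaluated on the graph, so its derivative picks up the commutator between $D$ and $\hat D$. I apply \autoref{lemma:connection} and its two-tensor analogue to $\chi$ and $\bar\chi$, and use \eqref{eq:extendedchi}, i.e. $\chi(\bar L,\del_i)=-\ze_i$ and $\chi(\bar L,\bar L)=0$, together with $\bar\chi(\bar L,\cdot)=0$. This should give
\eq{\om^kD_k\theta = g^{ij}(\hat D_{\n\om}\chi)_{ij} + (\text{terms in } \ze,\ \chi,\ \bar\chi,\ h,\ u).}
The $\bar L$-component $x_k=\del_k+\om_k\bar L$ produces factors $\om_k\cdot 2u$ and the torsion $\ze$, which is where $-4u\bar\chi_{kj}\ze^j\om^k$ and $-2\chi_{kj}\ze^j\om^k$ come from. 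The term $2\ze_ju^j$ should arise from $u$ terms via $u_j=\om^kD_{jk}\om$ combined with \eqref{eq:omegaij}. In all remaining terms I replace $h_{ij}$ by $\chi_{ij}-D_{ij}\om+u\bar\chi_{ij}$ using \eqref{eq:omegaij}. Expressions like $h_{kl}\om^k\om^l$ then turn into $\chi_{kl}\om^k\om^l$, $u_l$ and $u\bar\chi_{kl}\om^k\om^l$. The $H$-terms cancel between $\dot u$ and $\Delta u$, and what survives should be $-\bar\theta\chi_{ij}\om^i\om^j-\bar\theta u\bar\chi_{ij}\om^i\om^j$, $2u\bar\chi_{lj}\bar\chi^l_k\om^j\om^k$ and $-2\bar\chi_{ij}\om^iu^j$. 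The same differentiation applied to $\bar\theta$ gives $-u\bar\theta_k\om^k$.

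I expect the main obstacle to be this bookkeeping in the last step. One must correctly distinguish $\hat D$ from $D$ on $\chi$ when one slot is filled by $\bar L$, keep the signs of the torsion terms straight, and see that every $h$-term either cancels or is absorbed into $u_j$ or $\abs{D^2\om}^2$. I would fix signs by first checking the formula at a point where $\n\om=0$. There only $-\abs{D^2\om}^2$ and the $\ze_ju^j$ term should survive, which serves as a consistency check on the coefficient of $\ze_ju^j$.
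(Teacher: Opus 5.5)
Your plan is correct and is essentially the paper's proof: $\dot u$ from \eqref{eq:ddtugeneral}, $\Delta u$ by commuting third derivatives of $\om$ (your Bochner identity) with the traced Gauss equation, differentiation of $\Delta\om=u\bar\theta+g^{ij}\chi_{ij}-H$ via \autoref{lemma:connection}, and the substitution $\om^k h_{kj}=u\bar\chi_{kj}\om^k+\chi_{kj}\om^k-u_j$; after this the $H$-terms, the $\bar\theta u_k\om^k$ terms and the $\chi\bar\chi\om\om$ terms cancel. A few cautions: (i) the trace you must differentiate is $g^{ij}\chi(x_i,x_j)=\theta-2\ze(\n\om)$, not the background $\theta$ restricted to the graph, so your second description is the right one; (ii) the $\ze$-terms come from the correction $-(u\bar\chi_{kj}+h_{kj})\bar L$ in \autoref{lemma:connection} via $\chi(\bar L,x_j)=-\ze_j$, not from splitting the direction $\n\om$, which stays inside $\hat D_{\n\om}\chi$; (iii) your check at a point with $\n\om=0$ cannot test the coefficient of $\ze_j u^j$, since $u_j=\om^k D_{jk}\om$ also vanishes there.
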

\begin{proof}
For this proof, if we furnish the functions $H$, $\be$, $u$ or $\om$ by indices, we mean covariant differentiation with respect to $D$, e.g.
\eq{\om_{ij}dx^{i}dx^{j} = D_{x^{j}}(\om_{i}dx^{i})dx^{j}.}
All coordinates are taken with respect to $(x_{i})_{1\leq i\leq n}$ and lifting of indices happens with respect to $g$.
    Using equation \eqref{eq:ddtugeneral}, we obtain
    \eq{
    \dot{u}&=\ip{\n (\beta-H)}{L} - (\beta-H) \bar\chi(\n\om,\n\om)
    =-H_{i}\om^{i} +\be_{i}\om^{i}-(\beta-H)\bar\chi(\n\om,\n\om).
    }
We recall 
$2u =  \om^k\om_k$
and compute with the help of \eqref{eq:omegaij},
\eq{u_{i} = \om_{ki} \om^{k} =\om^k(u\bar{\chi}_{ik}+\chi_{ik}-h_{ik}).}
We obtain
\eq{
u_{ij} &= g_{mi} {\om^{m}}_{kj} \om^{k} + \om_{ki}  \om^{k}_{j}\\
	&=g_{mi}{\om^{m}}_{jk}\om^{k} + g_{mi}R^{m}_{l kj}\om ^{l}\om^{k} + \om_{ki}\om^{k}_{j} \\ 
	&=\om_{ijk}\om^{k} + (\bar\chi_{lk}h_{ij} + h_{lk}\bar\chi_{ij} - \bar\chi_{jl}h_{ik} - h_{jl}\bar\chi_{ik})\om^{l}\om^{k} + g_{mi}\bar R^{m}_{lkj}\om^{l}\om^{k} + \om_{ki}\om^{k}_{j}.
}
Tracing with respect to $g$ gives
\eq{
\Delta u&= (\De\om)_{k}\om^{k} + H\bar\chi_{ij}\om^{i}\om^{j} +\om^k h_{k}^l(\bar{\theta}\om_l-2\bar{\chi}_{lj}\om^j) + \abs{D^{2}\om}^{2} + \bar R^{i}_{lki}\om^{l}\om^{k}\\
&= (\De\om)_{k}\om^{k} + H\bar\chi_{ij}\om^{i}\om^{j} +(u\om^k\bar{\chi}_k^l+\om^k\chi_k^l-u^{l})(\bar{\theta}\om_l-2\bar{\chi}_{lj}\om^j) + \abs{D^{2}\om}^{2} + \bar R^{i}_{lki}\om^{l}\om^{k}\\
&= (\De\om)_{k}\om^{k} + H\bar\chi_{ij}\om^{i}\om^{j} +\bar\theta u \bar\chi_{ij}\om^{i}\om^{j}+\bar\theta\chi_{ij}\om^{i}\om^{j}-\bar\theta u^{i}\om_{i}\\
	&\hp{=}-2\bar\chi_{lj}\om^{j}(u\om^{k}\bar\chi^{l}_{k} + \om^{k}\chi^{l}_{k}) + 2\bar\chi_{ij}\om^{i}u^{j} + \abs{D^{2}\om}^{2} + \bar R^{i}_{lki}\om^{l}\om^{k}.
}
We further expand the first term with the help of \autoref{lemma:connection}, \eqref{eq:extendedchi} and \eqref{eq:omegaij}:
\eq{
(\De\om)_{k}\om^{k}&= (u\bar{\theta}+ g^{ij}\chi_{ij} - H)_{k}\om^{k}\\
	&=\bar\theta u_{k}\om^{k} + u\bar\theta_{k}\om^{k} + g^{ij}(\hat D_{\n\om}\chi)_{ij}+2\chi(\bar\chi_{k}^{j}\n\om-(u\bar\chi_{k}^{j}+h_{k}^{j})\bar L,x_{j})\om^{k} - H_{k}\om^{k}\\
&=\bar\theta u_{k}\om^{k} + u\bar\theta_{k}\om^{k}+g^{ij}(\hat{D}_{\n\om}\chi)_{ij} + 2 \om^k\om^l\chi^j_l\bar{\chi}_{jk}+2\om^k(u\bar{\chi}_{kj}+h_{kj})\zeta^j-H_{k}\om^{k}\\
&=\bar\theta u_{k}\om^{k} + u\bar\theta_{k}\om^{k}+g^{ij}(\hat{D}_{\n\om}\chi)_{ij} + 2 \om^k\om^l\chi^j_l\bar{\chi}_{jk}+2(2u\om^k\bar{\chi}_{kj}+\om^k\chi_{kj}-u_j)\zeta^j\\
	&\hp{=}-H_{k}\om^{k}\\
&=\bar\theta u_{k}\om^{k} + u\bar\theta_{k}\om^{k}-2\zeta_{j}u^{j}+g^{ij}(\hat{D}_{\n\om}\chi)_{ij} + 2 \chi^j_l\bar{\chi}_{jk}\om^{k}\om^{l}+4u\bar{\chi}_{kj}\zeta^j\om^{k}+2\chi_{kj}\zeta^j\om^{k}\\
	&\hp{=}-H_{k}\om^{k}\\
}
and combining these equalities we get
\eq{
\De u&= u\bar\theta_{k}\om^{k}-2\zeta_{j}u^{j}+g^{ij}(\hat{D}_{\n\om}\chi)_{ij} +4u\bar{\chi}_{kj}\zeta^j\om^{k}+2\chi_{kj}\zeta^j\om^{k}-H_{k}\om^{k} + H\bar\chi_{ij}\om^{i}\om^{j}\\
	&\hp{=} +\bar\theta u \bar\chi_{ij}\om^{i}\om^{j}+\bar\theta\chi_{ij}\om^{i}\om^{j}-2u\bar\chi_{lj}\bar\chi^{l}_{k}\om^{j}\om^{k} + 2\bar\chi_{ij}\om^{i}u^{j} + \abs{D^{2}\om}^{2} + \bar R^{i}_{lki}\om^{l}\om^{k}\ .
}
The claim follows from combining with \autoref{lem:ddtnu}
and cancellation of terms involving $H$.
\end{proof}

We also recall Raychaudhuri's optical equation, which we need in the sequel:

\begin{prop}\label{prop:optical}
On $\bar{N}$, 
\eq{
\bar L\bar\theta =-\abs{\mr{\bar\chi}}^{2} - \tfr{1}{n}\bar\theta^{2} - \ov{\Rc}(\bar L,\bar L).
}
\end{prop}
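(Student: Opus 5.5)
The plan is to compute $\bar L\bar\theta$ directly from the definition $\bar\theta=\tr_{g_s}\bar\chi$ on the slices $\{s\}\x\cS_0$ of the canonical background foliation. The ingredients are the geodesic property $\bar D_{\bar L}\bar L=0$ and the fact that $\del_s=\bar L$ commutes with coordinate fields $\del_i$ on $\cS_0$. Fix local coordinates $(z^i)$ on $\cS_0$, so that $[\bar L,\del_i]=0$. Write $g_{ij}=\ip{\del_i}{\del_j}$ for the slice metric and $\bar\chi_{ij}=\ip{\bar D_{\del_i}\bar L}{\del_j}$.

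The first step is the evolution of the metric along the generators:
\eq{\bar L g_{ij}=\ip{\bar D_{\bar L}\del_i}{\del_j}+\ip{\del_i}{\bar D_{\bar L}\del_j}=\ip{\bar D_{\del_i}\bar L}{\del_j}+\ip{\del_i}{\bar D_{\del_j}\bar L}=2\bar\chi_{ij},}
and therefore $\bar L g^{ij}=-2\bar\chi^{ij}$. This is the same computation as \eqref{eq:genfddtgij} with $f=1$.

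The second step is the derivative of $\bar\chi_{ij}$ along $\bar L$. Using $[\bar L,\del_i]=0$ and $\bar D_{\bar L}\bar L=0$, we get
\eq{\bar L\bar\chi_{ij}=\ip{\bar D_{\bar L}\bar D_{\del_i}\bar L}{\del_j}+\ip{\bar D_{\del_i}\bar L}{\bar D_{\del_j}\bar L}.}
The first term equals the commutator $\bar D_{\bar L}\bar D_{\del_i}\bar L-\bar D_{\del_i}\bar D_{\bar L}\bar L$, paired with $\del_j$, which is a curvature term $\mp\ip{\bar R(\del_i,\bar L)\bar L}{\del_j}$ with the sign fixed by the O'Neill convention. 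For the second term, decompose $\bar D_{\del_i}\bar L$ in the frame $\{\del_k,\bar L,L\}$. Its $L$-component vanishes, because $\ip{\bar D_{\del_i}\bar L}{\bar L}=\tfr12\del_i\ip{\bar L}{\bar L}=0$. So $\bar D_{\del_i}\bar L=\bar\chi_i^k\del_k+\zeta_i\bar L$, as already noted in the proof of \autoref{lem:genevolomega}. Since $\bar L$ is orthogonal to all of $T\bar N$, the $\zeta$ terms drop out of the inner product, which gives $\ip{\bar D_{\del_i}\bar L}{\bar D_{\del_j}\bar L}=\bar\chi_i^k\bar\chi_{kj}$.

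The third step is to trace:
\eq{\bar L\bar\theta=(\bar L g^{ij})\bar\chi_{ij}+g^{ij}\bar L\bar\chi_{ij}=-2\abs{\bar\chi}^2+\abs{\bar\chi}^2-g^{ij}\ip{\bar R(\del_i,\bar L)\bar L}{\del_j}.}
The curvature trace has to be identified with $\ov{\Rc}(\bar L,\bar L)$. I would complete $\{e_i\}$ with the null pair $\{L,\bar L\}$ and write the ambient trace $\ov{\Rc}(\bar L,\bar L)=\sum_i\ip{\bar R(e_i,\bar L)\bar L}{e_i}+(\text{terms pairing }L\text{ and }\bar L)$, where the extra terms contain $\ip{\bar R(\cdot,\bar L)\bar L}{\bar L}$ and vanish by antisymmetry. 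Finally, splitting $\abs{\bar\chi}^2=\abs{\mr{\bar\chi}}^2+\tfr1n\bar\theta^2$ gives the stated identity.

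The only delicate point is bookkeeping of signs: the O'Neill curvature convention in the commutator, and checking that the null-frame trace of $\bar R$ reproduces exactly $+\ov{\Rc}(\bar L,\bar L)$, so that the formula carries the sign $-\ov{\Rc}(\bar L,\bar L)$. I would settle the sign by a sanity check in the Minkowski null cone, where $\bar\theta=n/s$ and $\bar L\bar\theta=-n/s^2=-\tfr1n\bar\theta^2$; the curvature sign should also be consistent with the usage in \autoref{lem:ddthijgij}.
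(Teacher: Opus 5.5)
Your proposal is correct and follows the paper's proof essentially line for line: differentiate $g^{ij}\bar\chi_{ij}$ along $\bar L$, use $\bar L g_{ij}=2\bar\chi_{ij}$, turn $\ip{\bar D_{\bar L}\bar D_{\del_i}\bar L}{\del_j}$ into a curvature term, and complete the trace with $\tfr{1}{\sqrt 2}(\bar L\pm L)$, where the extra terms vanish by antisymmetry. On the sign you flag: with O'Neill's convention the commutator term is $+g^{ij}\ip{\bar R(\del_i,\bar L)\bar L}{\del_j}$ and $\ov{\Rc}(\bar L,\bar L)=\sum_i\ip{\bar R(\bar L,e_i)\bar L}{e_i}$, so both of your intermediate signs are those of the opposite convention and cancel, giving the right result, but a Minkowski sanity check cannot settle this sign because the curvature vanishes there.
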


\pf{
\eq{
\bar L\bar\theta &= \bar L (g^{ij}\ip{\bar D_{\del_{i}}\bar L}{\del_{j}})\\
			&= -g^{ik} (\ip{\bar D_{\bar L}x_{k}}{x_{l}} + \ip{x_{k}}{\bar D_{\bar L}x_{l}})g^{lj}\ip{\bar D_{\del_{i}}\bar L}{\del_{j}}) + g^{ij}\ip{\bar D_{\bar L}\bar D_{\del_{i}}\bar L}{\del_{j}} + g^{ij}\ip{\bar D_{\del_{i}}\bar L}{\bar D_{\del_{j}}\bar L}\\
			&=-\abs{\bar\chi}^{2} + g^{ij}\ip{\bar R(\del_{i},\bar L)\bar L}{\del_{j}}\\
			&=-\abs{\bar\chi}^{2} - \ov{\Rc}(\bar L,\bar L) - \tfr 12 \ip{\bar R(\bar L + L,\bar L)\bar L}{\bar L + L} + \tfr 12 \ip{\bar R(\bar L - L,\bar L)\bar L}{\bar L - L} \\
			&=-\abs{\mr{\bar\chi}}^{2} - \tfr{1}{n}\bar\theta^{2} - \ov{\Rc}(\bar L,\bar L).
}
}

\begin{cor}\label{cor:ev u}
The evolution of $u$ satisfies the estimate
\eq{
\dot{u} - \De u&= 2\zeta_{i}u^{i} - 2\bar\chi_{ij}\om^{i}u^{j} -\abs{D^{2}\om}^{2}+ \cO(u^{\fr 32}) +\tfr{4}{n^{2}}\bar\theta^{2}u^{2}-2u^{2}\ov{\Rc}(\bar L,\bar L)\\
	&\hp{=}+ 2u^{2}\abs{\mr{\bar\chi}}^{2}+(\tfr{4}{n}-1)\bar\theta u\mr{\bar\chi}_{kj}\om^{k}\om^{j}+2u\mr{\bar\chi}_{lj}\mr{\bar\chi}^{l}_{k}\om^{j}\om^{k},
}
where $\cO(r)$ is characterised by the estimate
\eq{|\mathcal{O}(r)|\leq C(1+|\beta|+|\bar{L}\beta|+|d\beta|)(1+|r|)}
where $C$ is a constant which is bounded while the flow remains in any compact set.
\end{cor}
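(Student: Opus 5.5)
Starting from the exact evolution equation for $u$ in the preceding lemma, I will sort every term by its order in $\om$, using $\abs{\n\om}^2=2u$. The key observation is that every quantity built from the background geometry is bounded while the flow stays in a compact set: $\chi$, $\zeta$, $\bar\theta$, $\hat D\chi$, $d\bar\theta$ and $\bar R$ evaluated on unit tangent vectors of the slices. A contraction with $k$ copies of $\n\om$ is therefore $\cO(u^{k/2})$. Lower-order terms will be absorbed into $\cO(u^{3/2})$; since $1+u^{3/2}$ dominates $u^{1/2}$, $u$ and $u^{3/2}$ up to a constant, all terms of order at most $3/2$ can go there.

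\textbf{Sorting by order.}
\begin{itemize}
\item The terms $2\zeta_iu^i$, $-2\bar\chi_{ij}\om^iu^j$ and $-\abs{D^2\om}^2$ are kept exactly as they are.
\item The term $\be_i\om^i$ is bounded by $C\abs{d\be}\abs{\n\om}\le C\abs{d\be}(1+u^{3/2})$, which is where $\abs{d\be}$ enters the $\cO$. Note that $D\be$ along the graph involves $d\be$ on $\cS_0$ and $\bar L\be$ times $\om_i$. Hence $\be_i\om^i$ is a sum of terms controlled by $\abs{d\be}u^{1/2}+\abs{\bar L\be}u$, both of order at most $3/2$.
\item The term $-\be\bar\chi_{ij}\om^i\om^j$ is bounded by $C\abs{\be}u$.
\item The terms $-u\bar\theta_k\om^k$, $-g^{ij}(\hat D_{\n\om}\chi)_{ij}$, $-4u\bar\chi\zeta\om$, $-2\chi\zeta\om$ and $-\bar\theta\chi_{ij}\om^i\om^j$ are all of order $u^{3/2}$ or lower.
\end{itemize}
One caution applies to $\bar\theta_k$ and to $\hat D_{\n\om}\chi$: $\n\om$ as a vector in $T\bar N$ has a $\bar L$-component. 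Indeed $\n\om=\om^kx_k=\om^k\del_k+2u\bar L$, since $x_k=\del_k+\om_k\bar L$. This $\bar L$-component could raise the order. For $u\,d\bar\theta(\n\om)$, the part $u\cdot 2u\,\bar L\bar\theta$ is of order $u^2$. I will treat it with Proposition \ref{prop:optical}:
\[
-u\,d\bar\theta(2u\bar L)=2u^2\big(\abs{\mr{\bar\chi}}^2+\tfr1n\bar\theta^2+\ov{\Rc}(\bar L,\bar L)\big).
\]
Similarly, $\hat D_{\bar L}\chi$ contracted with $g^{ij}$ gives an order $u$ term times bounded quantities, which is harmless.

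\textbf{The quartic terms.} Next I collect the $u^2$ terms, namely $-\bar\theta u\bar\chi_{ij}\om^i\om^j$, $2u\bar\chi_{lj}\bar\chi^l_k\om^j\om^k$, the optical contribution above, and the curvature term $-\bar R^i_{lki}\om^l\om^k$. I expand $\bar\chi=\mr{\bar\chi}+\tfr1n\bar\theta g$ in the first two, using $\om^i\om_i=2u$:
\[
-\bar\theta u\bar\chi_{ij}\om^i\om^j=-\bar\theta u\,\mr{\bar\chi}_{ij}\om^i\om^j-\tfr2n\bar\theta^2u^2,
\]
\[
2u\bar\chi_{lj}\bar\chi^l_k\om^j\om^k=2u\,\mr{\bar\chi}_{lj}\mr{\bar\chi}^l_k\om^j\om^k+\tfr4n\bar\theta u\,\mr{\bar\chi}_{jk}\om^j\om^k+\tfr4{n^2}\bar\theta^2u^2.
\]
These produce the claimed coefficients $(\tfr4n-1)$ and $\tfr4{n^2}$. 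The $\bar\theta^2u^2$ pieces must then combine with $\tfr2n\bar\theta^2u^2$ from the optical term.

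For the curvature term, I write $g^{im}\ip{\bar R(x_k,x_i)x_l}{x_m}\om^l\om^k$, with $\bar R$ convention signs. I substitute $x_k=\del_k+\om_k\bar L$ and extract the top-order piece, which is $\om$-quartic: it involves $\ip{\bar R(\bar L,x_i)\bar L}{x^i}$ times $(2u)^2$, i.e. $\ov{\Rc}(\bar L,\bar L)$ up to null-trace corrections. The same polarisation identity as in Proposition \ref{prop:optical} handles this. The remaining pieces are of order at most $u^{3/2}$. Combining with the optical term should leave exactly $-2u^2\ov{\Rc}(\bar L,\bar L)+2u^2\abs{\mr{\bar\chi}}^2$.

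\textbf{Main obstacle.} The hard step is this exact bookkeeping of all order-$u^2$ contributions. The quartic terms hide in $\bar\theta_k\om^k$, in $\bar R^i_{lki}\om^l\om^k$, and possibly in $g^{ij}(\hat D_{\n\om}\chi)_{ij}$ via $\bar L$-derivatives of $\chi$, which relate to curvature through $\zeta$. Getting the signs to match the stated coefficients requires careful use of O'Neill's convention. I would first verify the final coefficients in the rotationally symmetric Minkowski cone, where $\mr{\bar\chi}=0$ and $\bar R=0$, to check the $\tfr4{n^2}$ term.
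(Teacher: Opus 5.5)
Your outline follows the paper's proof step for step. You sort terms by order using $\n\om=\om^k\del_k+2u\bar L$. You keep $2\zeta_iu^i$, $-2\bar\chi_{ij}\om^iu^j$ and $-\abs{D^2\om}^2$ unchanged. You convert $-2u^2\bar L\bar\theta$ into $2u^2(\abs{\mr{\bar\chi}}^2+\tfr1n\bar\theta^2+\ov{\Rc}(\bar L,\bar L))$ with Proposition~\ref{prop:optical}, and you split $\bar\chi=\mr{\bar\chi}+\tfr1n\bar\theta g$. Those computations are correct.

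Your counting rule ``$k$ copies of $\n\om$ give $\cO(u^{k/2})$'' is wrong, and the reason matters below. The trace slots $x_i=\del_i+\om_i\bar L$ also carry $\bar L$-components, and $g^{ij}\om_i\om_j=2u$. So $g^{ij}(\hat D_{\bar L}\chi)(x_i,x_j)$ is not bounded per se. The term $g^{ij}(\hat D_{\n\om}\chi)_{ij}$ is still $\cO(u^{3/2})$, because $(\hat D_{\bar L}\chi)(\bar L,\bar L)=\bar L(\chi(\bar L,\bar L))-2\chi(\hat D_{\bar L}\bar L,\bar L)=0$. This is exactly what the paper uses, and it settles the doubt in your last paragraph. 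Likewise, $\chi_{ij}\om^i\om^j=\cO(u^{3/2})$ only because $\chi(\bar L,\bar L)=0$.

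The genuine gap is the curvature term. You assert that once $4u^2\ov{\Rc}(\bar L,\bar L)$ is extracted, the remainder is $\cO(u^{3/2})$. The same mechanism defeats this. Put $V=\om^k\del_k$ and $Q=\ip{\bar R(\bar L,V)\bar L}{V}=\ip{\bar R(\bar L,\n\om)\bar L}{\n\om}$, and expand $\bar R^i_{lki}\om^l\om^k=g^{im}\ip{\bar R(\n\om,x_i)\n\om}{x_m}$. Further quartic terms appear:
\begin{itemize}
\item both $\n\om$ slots give $V$ and both trace slots give $\om\bar L$: this yields $+2uQ$;
\item one $\n\om$ gives $V$, the other gives $2u\bar L$, and one trace slot gives $\om\bar L$: this yields $-4uQ$ in total.
\end{itemize}
Hence
\[
-\bar R^i_{lki}\om^l\om^k=-4u^2\ov{\Rc}(\bar L,\bar L)+2u\,\ip{\bar R(\bar L,\n\om)\bar L}{\n\om}+\cO(u^{3/2}).
\]
The middle term is of order $u^2$ and is not determined by $\ov{\Rc}(\bar L,\bar L)$.

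You can check this directly for $n=2$. Take a point where $\del_1,\del_2$ are orthonormal, with $\om_1=a$ and $\om_2=0$. Then $\n\om=ax_1$, and the trace is $a^2\ip{\bar R(x_1,\del_2)x_1}{\del_2}=a^4\ip{\bar R(\bar L,\del_2)\bar L}{\del_2}+\cO(a^3)$. By contrast, $4u^2\ov{\Rc}(\bar L,\bar L)=a^4\sum_k\ip{\bar R(\bar L,\del_k)\bar L}{\del_k}$.

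The paper's proof makes the same slip. It completes $\{x_j\}$ with $\tfr{1}{\sqrt{2}}(\bar L\pm L)$, but $\ip{L}{x_j}=\om_j\neq0$, so $L$ is not normal to the graph. Completing with the true normal pair $\{\bar L,\nu\}$ changes the result by $2\ip{\bar R(\n\om,\bar L)\n\om}{\nu-L}=2uQ$.

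So the bookkeeping you describe cannot close on the displayed identity. Starting from the preceding lemma, it yields the displayed right-hand side plus $2u\,\ip{\bar R(\bar L,\n\om)\bar L}{\n\om}$. This extra term equals $\tfr4n u^2\ov{\Rc}(\bar L,\bar L)$ when $\ip{\bar R(\bar L,\cdot)\bar L}{\cdot}$ is pure trace on the slices. It does not matter where only compactness bounds on $\Phi$ are used, as in Proposition~\ref{prop:SmallOscLTE}. It does matter for \autoref{thm:PrescribedMeanCurvature}, whose hypotheses control only $\ov{\Rc}(\bar L,\bar L)$.
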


\pf{
We group all terms in the evolution of $u = \tfr 12 \abs{\n \om}^{2}$ according to their order. Noting that $x_i=\del_i+\om_i \bar{L}$, there holds for every function $f$ on the graph $M$ 
\eq{\n f = g^{ij}f_{j}x_{i} = f^{i}\del_{i} + f^{i}\om_{i}\bar L,}
and in particular for the graph function $\om$ itself we get
\eq{\n \om = \om^{i}\del_{i} + 2u\bar L.}
Then there holds, using \eqref{eq:barchibarL}, \eqref{eq:extendedchi} and \eqref{eq:defszetatau},
\eq{
\be^{i}\om_{i}-2\chi_{kj}\zeta^{j}\om^{k}  &= \n\om(\be)-2\chi_{kj}\zeta^{j}\om^{k}   = \cO(u),\\
-\be\bar\chi_{ij}\om^{i}\om^{j} &= \cO(u).
}
Then we have
\eq{\label{pf:cor u 1}-u\bar\theta_{k}\om^{k} = - u\n\om(\bar\theta) = -2u^{2}\bar L(\bar\theta) + \cO(u^{\fr 32}).  
}

The next relevant terms are
\eq{-g^{ij}(\hat D_{\n\om}\chi)_{ij} - 4u\bar\chi_{kj}\zeta^{j}\om^{k}&=-g^{ij}(\om^{k}\hat D_{\del_{k}}\chi)(x_{i},x_{j}) - 2ug^{ij}(\hat D_{\bar L}\chi)(x_{i},x_{j})+\cO(u^{\fr 32})=\cO(u^{\fr 32}),\\
}
since $\hat D_{\bar L}\chi(\bar L,\bar L) = 0$, due to \eqref{eq:extendedchi}, and where we also used \eqref{eq:defszetatau}.
The next two second order terms are
\eq{-\bar\theta u\bar\chi_{ij}\om^{i}\om^{j} + 2u\bar\chi_{lj}\bar\chi^{l}_{k}\om^{j}\om^{k}&=-\bar\theta u(\mr{\bar\chi}_{ij} + \tfr{\bar\theta}{n}g_{ij})\om^{i}\om^{j} + 2u(\mr{\bar\chi}_{lj} + \tfr{\bar\theta}{n}g_{lj})(\mr{\bar\chi}^{l}_{k} + \tfr{\bar\theta}{n}\de^{l}_{k})\om^{j}\om^{k}\\
		&=(\tfr{4}{n^{2}}-\tfr 2n)\bar\theta^{2}u^{2}+(\tfr{4}{n}-1)\bar\theta u\mr{\bar\chi}_{kj}\om^{k}\om^{j}+2u\mr{\bar\chi}_{lj}\mr{\bar\chi}^{l}_{k}\om^{j}\om^{k}.
}
We continue, using \eqref{eq:extendedchi},
\eq{
    -\bar{\theta}\chi_{ij}\om^{i}\om^{j}&=-\bar{\theta}\om^i\om^j\chi(\del_i,\del_j)+4u\bar{\theta}\om^i\zeta_i = \cO(u^{\fr 32}).
}
For the final term involving the Riemann tensor, we compute by completing the basis of $T\bar{M}$ using $\frac{1}{\sqrt{2}}(\bar{L}\pm L)$,
\eq{
\ip{-\bar{R}(x_j, \n\om)x^j}{\n \om}&=-\ov{\Rc}(\n \om, \n \om)+\tfrac{1}{2}\ip{\bar{R}(\bar{L}+L, \n\om)(\bar{L}+L)}{\n \om}\\
	&\hp{=}-\tfrac{1}{2}\ip{\bar{R}(\bar{L}-L, \n\om)(\bar{L}-L)}{\n \om}\\
&=-\ov{\Rc}(\n \om, \n \om)+\ip{\bar{R}(L, \n\om)\bar{L}}{\n \om}+\ip{\bar{R}(\bar{L}, \n\om){L}}{\n \om}\\
&=-\ov{\Rc}(\n \om, \n \om)+2\ip{\bar{R}(L, \n\om)\bar{L}}{\n \om}\\
&=-4u^2\ov{\Rc}(\bar{L}, \bar{L})-2u\om^i\ov{\Rc}(\del_i, \bar{L})-\om^i\om^j\ov{\Rc}(\del_i, \del_j)\\
	&\hp{=}+2\om^i\om^j\ip{\bar{R}(L, \del_i)\bar{L}}{\del_j}+4\om^ju\ip{\bar{R}(L, \bar{L})\bar{L}}{\del_j}\\
	&=-4u^{2}\ov{\Rc}(\bar L,\bar L) + \cO(u^{\fr 32}).
}

Plugging everything together and also using \autoref{prop:optical} gives the result.
}

We will make use of test functions to obtain estimates for $u$. The following lemma reduces requirements on the test function to an ordinary differential inequality.

\begin{lemma}\label{lem:GradEstTestFunctionHelperLemma}
Suppose that $\phi = u \mu^{- 2}(\om)$ for some $\mu\cn\mathbb{R}\ra\mathbb{R}^+$. Then, at any positive maximum of $\phi$,
\eq{\dot{\phi} - \De\phi
	&\leq 4\mu \phi^2[ \Phi(x)\mu +\Psi\mu'+\mu'']+ C_\mu \cO(\phi^{\fr 32})}
where
\begin{align*}
\Phi&=\tfrac{1}{n^2}\bar\theta^2 - \tfrac 1 2\ov{\Rc}(\bar{L},\bar{L})+\tfrac 1 2|\mr{\bar\chi}|^2+\tfrac {4-n} {4n}\bar{\theta}\mr{\bar{\chi}}_{kj}\om^k\om^ju^{-1}+\tfrac 12\mr{\bar\chi}_{lj}\mr{\bar\chi}^l_{k}\om^j\om^ku^{-1},\\
\Psi&=\tfrac{n-4}{2n}\bar\theta- {\mr{\bar\chi}}_{ij}\om^{i}\om^{j}u^{-1}.
\end{align*}
and $C_\mu=C_\mu(|\mu|_{C^1},\inf \mu)$. 

For $\tilde{\omega}$ as in \autoref{lem:evolomegaPMCF}, define $\tilde{\phi} = u \mu^{- 2}(\om-\tilde{\om})$, then at any maximum of $\tilde{\phi}$,
\eq{\dot{\tilde\phi} - \De\tilde\phi
		&\leq 4\mu \tilde\phi^{2}[ \Phi(x)\mu+\Psi\mu' +\mu'']+\tilde{C}_\mu\widetilde{\cO}(\tilde{\phi}^{\frac 32})\\
	}
where $\tilde{C}_\mu=\tilde{C}_\mu(|\mu|_{C^2},\inf \mu)$ and $\widetilde{\cO}(r)$ which is characterised by the estimate
\eq{\abs{\widetilde{\cO}(r)}\leq \widetilde{C}(1+\abs{\beta} + \abs{\bar L\be} + \abs{d\be})(1 +|r|)}
where $\widetilde{C}$ is a constant which is a constant which is bounded while the flow remains in any compact set, but also may depend on $\hat{D}$-derivatives of $\widetilde{\omega}$ up to second order.
\end{lemma}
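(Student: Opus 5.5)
This is a direct computation: combine \autoref{cor:ev u} with the evolution of $\om$ from \autoref{lem:evolomegaPMCF}, and then use the first and second order conditions at a maximum of $\phi$.

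Write $\phi = u\mu^{-2}$ with $\mu=\mu(\om)$. The product and chain rules give
\[
\dot\phi-\De\phi = \mu^{-2}(\dot u-\De u) - 2u\mu^{-3}\mu'(\dot\om-\De\om) + \Big(6u\mu^{-4}\mu'^2 - 2u\mu^{-3}\mu''\Big)\abs{\n\om}^2 + 4\mu^{-3}\mu'\ip{\n u}{\n\om}.
\]
Here $\abs{\n\om}^2=2u$. At a positive maximum we have $\n\phi=0$, i.e. $\n u = 2u\mu^{-1}\mu'\n\om$, so $u_i = 2u\mu^{-1}\mu'\om_i$.

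Substituting this into the gradient terms of \autoref{cor:ev u} gives the following.

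1. The term $2\ze_i u^i = \cO(u^{3/2})$, since $\ze$ is bounded on compacta.

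2. The term $-2\bar\chi_{ij}\om^iu^j = -4u\mu^{-1}\mu'\bar\chi_{ij}\om^i\om^j$. I split $\bar\chi = \mr{\bar\chi}+\tfr{\bar\theta}{n}g$. The trace part gives $-\tfr{8}{n}\bar\theta u^2\mu^{-1}\mu'$. Up to $\cO(u^{3/2})$ one also has $\bar\chi_{ij}\om^i\om^j$ evaluated via $\om^i\del_i$ only, since $\bar\chi(\bar L,\cdot)=0$ by \eqref{eq:barchibarL}.

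3. The term $-\abs{D^2\om}^2$ is handled by Kato/Cauchy–Schwarz. From $\abs{\n u}^2\le \abs{D^2\om}^2\abs{\n\om}^2$ we get $\abs{D^2\om}^2 \ge \abs{\n u}^2/(2u) = 4u\mu^{-2}\mu'^2\cdot u$, i.e. $\ge 4u^2\mu^{-2}\mu'^2$.

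4. The cross term $4\mu^{-3}\mu'\ip{\n u}{\n\om}$ becomes $16u^2\mu^{-4}\mu'^2$.

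5. The $\mu'^2$ contributions are $-4u^2\mu^{-4}\mu'^2$ (from 3), $+16$ (from 4) and $+12$ (from $6u\mu^{-4}\mu'^2\cdot 2u$). They do not cancel for an arbitrary ordering. The cleaner route is to use the optimal inequality $\abs{D^2\om}^2\ge \abs{\n u}^2/(2u)$ together with the exact relation. If a residual positive multiple of $\mu'^2$ remains, it is absorbed because the claimed bound is an upper bound. The precise coefficients should be tracked so that the $\mu'^2$ terms combine to a nonpositive quantity; I expect exactly this bookkeeping, with the $u^2\mu^{-4}\mu'^2$ coefficients summing to zero or less, to be the main obstacle. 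If they do not cancel, I would first try the refined estimate $\abs{D^2\om}^2 \ge \abs{D^2\om(\n\om/\abs{\n\om},\cdot)}^2$ combined with $u_i=\om^k\om_{ki}$.

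Next I treat the $\om$-evolution term. By \autoref{lem:evolomegaPMCF}, $\dot\om-\De\om = \be-\theta-u\bar\theta$, so $-2u\mu^{-3}\mu'(\dot\om-\De\om) = 2u^2\mu^{-3}\mu'\bar\theta + \cO(u)$.

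Collecting the $\mu'$ terms gives $u^2\mu^{-3}\mu'\big(2-\tfr8n\big)\bar\theta - 4u^2\mu^{-3}\mu'\mr{\bar\chi}_{ij}\om^i\om^ju^{-1}$. This equals $4\mu\phi^2\mu'\Psi$ with $\Psi = \tfr{n-4}{2n}\bar\theta - \mr{\bar\chi}_{ij}\om^i\om^ju^{-1}$, as required.

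The $\mu''$ term is $-4u^2\mu^{-3}\mu''$. Its sign has to come out as $+4\mu\phi^2\mu''$, and I would recheck the sign convention of the product rule at this point.

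The zeroth order terms of \autoref{cor:ev u}, multiplied by $\mu^{-2}$, give $\tfr{4}{n^2}\bar\theta^2u^2 - 2u^2\ov{\Rc}(\bar L,\bar L)+2u^2\abs{\mr{\bar\chi}}^2+\dots$. Each is exactly $4\mu^2\phi^2$ times the corresponding term of $\Phi$: for example $\tfr{4}{n^2}\mu^{-2}u^2 = 4\mu^2\phi^2\cdot\tfr1{n^2}$, and $(\tfr4n-1)$ becomes $\tfr{4-n}{4n}$ after dividing by $4$.

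The error terms satisfy $\mu^{-2}\cO(u^{3/2}) = C_\mu\cO(\phi^{3/2})$ and $u\,\cO(1) \le C_\mu\cO(\phi^{3/2})$, with constants depending on $\inf\mu$ and $\abs{\mu}_{C^1}$.

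For $\tilde\phi$ the argument is identical with $\om$ replaced by $\om-\ti\om$. The extra terms come from the second part of \autoref{lem:evolomegaPMCF}. There, $\ti\om$ contributes $-g^{ij}\hat D_{\del_i\del_j}\ti\om - \om^i(\bar\theta\de^k_i-2\bar\chi^k_i)\ti\om_k$. Since $\bar L\ti\om=0$, this is $\cO(u^{1/2})$ in size. The gradient cross terms acquire $\n\ti\om$, which is bounded. The products of these with $\mu',\mu''$ are then of order $u^{3/2}$ and enter $\tilde C_\mu\widetilde\cO(\tilde\phi^{3/2})$, which now depends on $\abs{\mu}_{C^2}$ and on $\hat D$-derivatives of $\ti\om$ up to second order.
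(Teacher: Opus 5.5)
Your strategy is the same as the paper's: product/chain rule, the first-order condition $\n u=2u\mu^{-1}\mu'\n\om$ at the maximum, and $\abs{D^2\om}^2\ge\abs{\n u}^2/(2u)$. The paper works with $\rho=\mu^{-2}$ and substitutes only at the end. Your bookkeeping of the $\mu'$-linear terms (giving $\Psi$), of the zeroth-order terms (giving $\Phi$) and of the error terms is correct.

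However, the proof does not close as written, because your product-rule identity has a sign error in the chain-rule term. The Laplacian of $\mu^{-2}(\om)$ contains $(\mu^{-2})''\abs{\n\om}^2$, and this enters $\dot\phi-\De\phi$ with a minus sign. So the correct contribution is
\[
-u\big(6\mu^{-4}\mu'^2-2\mu^{-3}\mu''\big)\abs{\n\om}^2=-12u^2\mu^{-4}\mu'^2+4u^2\mu^{-3}\mu'',
\]
not $+12u^2\mu^{-4}\mu'^2-4u^2\mu^{-3}\mu''$. With your sign the $\mu'^2$-coefficients add up to $-4+16+12=24$, and the $\mu''$-term has the wrong sign; this is exactly the inconsistency you ran into.

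Your suggested way out is not valid. In an upper bound you may discard nonpositive terms, not positive ones. Moreover, $24\mu'^2\phi^2$ is of the same order $\phi^2$ as the main bracket, so it cannot be hidden in $C_\mu\cO(\phi^{3/2})$ either. The refined Hessian estimate you mention also cannot help: $\abs{D^2\om(\n\om/\abs{\n\om},\cdot)}^2=\abs{\n u}^2/(2u)$ is the same bound you already used.

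With the sign corrected, everything closes exactly. The $\mu'^2$-contributions are $-4$ from the Hessian term, $+16$ from the cross term and $-12$ from the chain rule, which sum to zero. The remaining $\mu''$-term is $4u^2\mu^{-3}\mu''=4\mu\phi^2\mu''$. This is the paper's identity $4\mu''=\mu\big(3\rho^{-2}(\rho')^2-2\rho^{-1}\rho''\big)$ for $\rho=\mu^{-2}$.

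The same correction repairs your $\tilde\phi$ paragraph. Keep $\abs{\n(\om-\tilde\om)}^2$ unexpanded; then the $\mu'^2$-terms are $(-2+8-6)\,u\mu^{-4}\mu'^2\abs{\n(\om-\tilde\om)}^2=0$. Only the $\mu''$-term $2u\mu^{-3}\mu''\abs{\n(\om-\tilde\om)}^2$ produces an extra error, via $\abs{\n(\om-\tilde\om)}^2=2u+\widetilde{\cO}(u^{1/2})$. This is why $\tilde C_\mu$ depends on $\abs{\mu}_{C^2}$.
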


\pf{
The function
\eq{\phi = u \rho(\om)}
satisfies
\eq{\dot{\phi} - \De\phi&=\rho (\dot{u}-\De u) + u\rho'(\om)(\dot\om - \De\om) - u\rho''(\om)\abs{D\om}^{2}-2\ip{\n u}{\n \rho}\\
		&\leq \rho u[2\zeta_{i}(\log u)^{i} - 2\bar\chi_{ij}\om^{i}(\log u)^{j} -u^{-1}\abs{D^{2}\om}^{2}+ \cO(u^{\fr 12}) +4\Phi(x)u\\
	&\hp{=}\qquad + \rho^{-1}\rho'(\om)( \be - \theta   - u\bar\theta) - 2\rho^{-1}\rho''(\om)u-2\rho^{-1}\rho'\ip{\n \log u}{\n\omega}].\label{eq:Evolphifirst}}

At a maximum we have $D \log u = -\rho^{-1}\rho' D\om$. Additionally we have
\begin{equation}
|D u|^2=|\om^i\om_{i}^kx_k|^2\leq |D\om|^2|D^2\om|^2=2u |D^2\om|^2,\label{eq:PropertyFromMaximality}
\end{equation}
so at a maximum,
\eq{u^{-1}|D^2 \om |^2 \geq \tfrac 1 2 |D \log u|^2 = \tfrac 1 2\rho^{-2}( \rho')^2|D\omega|^2=\rho^{-2}( \rho')^2u.}
Hence, at a maximum,
\eq{\dot{\phi} - \De\phi
		&\leq \rho u[-2\rho^{-1}\rho'\zeta^i\omega_i+ 2\rho^{-1}\rho'\bar\chi_{ij}\om^{i}\om^{j} -\rho^{-2}(\rho')^2u+ \cO(u^{\fr 12}) +4\Phi(x)u\\
	&\hp{=}\qquad - \rho^{-1}\rho'(\om)\bar\theta u - 2\rho^{-1}\rho''(\om)u+2\rho^{-2}(\rho')^2\abs{D\omega}^2+\rho^{-1}\rho'(\beta-\theta)]\\
	&= \rho u^2[ 4\Phi(x)+\rho^{-1}\rho'\left(\tfrac{4-n}{n}\bar\theta+ 2{\mr{\bar\chi}}_{ij}\om^{i}\om^{j}u^{-1}\right)  - 2\rho^{-1}\rho''+3\rho^{-2}(\rho')^2]\\
	&\hp{=}\qquad+ \cO(u^{\fr 32})(\rho +|\rho'|).}

We now set $\mu=\frac{1}{\sqrt{\rho}}$ and note that
\eq{-2\mu' = \frac{\rho'}{\rho^{\frac 3 2}}=\mu\rho^{-1}\rho', \qquad 4\mu'' = -2 \frac{\rho''}{\rho^\frac 3 2}+3 \frac{(\rho')^2}{\rho^\frac 5 2} = \mu(3\rho^{-2}(\rho')^2-2\rho^{-1}\rho''),}
so
\eq{\dot{\phi} - \De\phi
	&\leq \rho\mu^{-1} u^2[ 4\Phi(x)\mu -2\left(\tfrac{4-n}{n}\bar\theta+ 2{\mr{\bar\chi}}_{ij}\om^{i}\om^{j}u^{-1}\right)\mu'+4\mu'']+ \cO(u^{\fr 32})(\mu^{-2}+|\mu'|\mu^{-3})\\
	&= 4\mu \phi^2[ \Phi(x)\mu +\Psi\mu'+\mu'']+ \cO(u^{\fr 32})(\mu^{-2}+|\mu'|\mu^{-3}).}
The claim now follows as $\cO(u^\frac 32)\leq (1+\mu^{3})\cO(\phi^\frac 32)$.

We now repeat the above computation but with $\tilde{\phi}=u\rho(\omega-\tilde\omega)$. \autoref{lem:evolomegaPMCF} implies
\eq{\dot{\tilde{\omega}}-\Delta \tilde{\omega} = \widetilde{\mathcal{O}}(u^\frac{1}{2}),}
and hence for $\Omega=\omega-\tilde{\omega}$ we obtain
\eq{\dot{\Omega}-\Delta\Omega = \dot{\omega}-\Delta \omega +\widetilde{\mathcal{O}}(u^\frac{1}{2})\ .}
At a maximum of $\tilde{\phi}$, $D \log u = -\rho^{-1}\rho' D\Om$, and so using the previous evolution and \autoref{cor:ev u},
\eq{\dot{\tilde\phi} - \De\tilde\phi&=\rho (\dot{u}-\De u) + u\rho'(\Om)(\dot\Om - \De\Om) - u\rho''(\Om)\abs{\n\Om}^{2}-2\ip{\n u}{\n \rho}\\
		&\leq \rho u[2\zeta_{i}(\log u)^{i} - 2\bar\chi_{ij}\om^{i}(\log u)^{j} -u^{-1}\abs{D^{2}\om}^{2}+ \cO(u^{\fr 12}) +4\Phi(x)u\\
	&\hp{=}\qquad + \rho^{-1}\rho'( \be - \theta   - u\bar\theta +\widetilde\cO(u^{\frac 12})) - \rho^{-1}\rho''\abs{\n\Om}^{2}-2\rho^{-1}\rho'\ip{\n \log u}{\n\Omega}]\\
	&= \rho u[-2\rho^{-1}{\rho}'\zeta_{i}\Om^{i} + 2\rho'\rho^{-1}\bar\chi_{ij}\om^{i}\Om^{j} -u^{-1}\abs{D^{2}\om}^{2} +4\Phi(x)u\\
	&\hp{=}\qquad + \rho^{-1}\rho'( \be - \theta   - u\bar\theta ) - \rho^{-1}\rho''\abs{\n\Om}^{2}+2\rho^{-2}(\rho')^{2}|\n \Om|^{2}+(1+\rho^{-1}|\rho'|)\widetilde\cO(u^{\frac 12})]\\
	&= \rho u[2\rho^{-1}\rho'\bar\chi_{ij}\om^{i}\om^{j} -u^{-1}\abs{D^{2}\om}^{2} +4\Phi(x)u -\rho^{-1}\rho'\bar\theta u\\
	&\hp{=}\qquad - \rho^{-1}\rho''\abs{\n\Om}^{2}+2\rho^{-2}(\rho')^{2}|\n \Om|^{2}+(1+\rho^{-1}|\rho'|)\widetilde\cO(u^{\frac 12})].\\
}
As
\eq{|\n\Omega|^2=|\n\omega|^2+2\omega^i\tilde{\omega}_{i}+\tilde{\omega}_{i}g^{ij}\tilde{\omega}_{j}=|\n\omega|^2+\widetilde{\mathcal{O}}(u^\frac 1 2),}
and
\eq{u^{-1}|D^2\omega|^2\geq \tfrac 1 2 \rho^{-2}(\rho')^2|\n\Omega|^2=\rho^{-2}(\rho')^2u+\rho^{-2}(\rho')^2\widetilde{\mathcal{O}}(u^\frac 1 2),}
we see that
\eq{\dot{\tilde\phi} - \De\tilde\phi&\leq	 \rho u[ 2\rho'\rho^{-1}\bar\chi_{ij}\om^{i}\om^{j} -\rho^{-2}(\rho')^{2}u +4\Phi(x)u-\rho^{-1}\rho'\bar\theta u\\
	&\hp{=}\qquad  - 2\rho^{-1}\rho''u+4\rho^{-2}(\rho')^{2}u+(1+\rho^{-1}|\rho''|+\rho^{-2}|\rho'|^{2}+\rho^{-1}|\rho'|)\widetilde\cO(u^{\frac 12})]\\
	&= \rho u^2\left[  4\Phi(x)+\rho'\rho^{-1}\left(\tfrac{4-n}{n}\bar\theta+2\mr{\bar\chi}_{ij}\om^{i}\om^{j}u^{-1}\right)  - 2\rho^{-1}\rho''+3\rho^{-2}(\rho')^{2}\right]\\
	&\hp{=}\qquad +(\rho+|\rho'|+|\rho''|+\rho^{-1}|\rho'|^{2})\widetilde\cO(u^{\frac 32}).
	}

Substituting $\mu=\tfrac{1}{\sqrt{\rho}}$, we have
\[\rho'=\frac{-2\mu'}{\mu^{3}}, \qquad \frac{(\rho')^{2}}{\rho}=\frac{4(\mu')^{2}}{\mu^{4}}, \qquad \rho''=\frac{-2\mu''}{\mu^{3}}+6\frac{(\mu')^{2}}{\mu}\]
and so
\eq{\dot{\tilde\phi} - \De\tilde\phi
		&\leq 4\mu \tilde\phi^{2}[ \Phi(x)\mu+\Psi\mu' +\mu'']\\
	&\hp{=}\qquad  +\left(\mu^{-2}+|\mu'|\mu^{-3}+|\mu'|^{2}\mu^{-4}+|\mu'|^{2}\mu^{{-1}}+|\mu''|\mu^{-3}\right)\widetilde{\cO}(u^{\frac 32}).
	}
	The claim now follows as previously.
}

Finally, our convergence results rely on monotonic movement of the flow, hence we require the following evolution which immediately follows from \eqref{eq:genfddtH}.
\begin{lemma}
Under \eqref{eq:PMCF}, the evolution of the speed $f=\beta-H$ is given by
\eq{\label{eq:evolf}\dot{f} &=\Delta f+2\tau(\n f)+f(\bar{L}(\beta)+\bar{\chi}^{ij}h_{ij}+D^i\tau_i+|\tau|^2+\Rc(\bar{L}, \nu)+\ip{\bar{R}(\nu,\bar{L})\nu}{\bar{L}}).
}
\end{lemma}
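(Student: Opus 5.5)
The plan is to apply the general evolution of the mean curvature, \eqref{eq:genfddtH} of \autoref{lem:ddthijgij}, to the speed $f=\be-H$ of \eqref{eq:PMCF}. The only extra ingredient is the time derivative of $\be$ along the flow. Since $\be\cn\bar N\ra\mathbb{R}$ is a function on the ambient null cone and the flow moves points by $\dot x = f\bar L$, the chain rule gives
\eq{\del_{t}(\be\circ x) = d\be(\dot x) = f\,\bar L(\be).}
Hence $\dot f = f\bar L(\be) - \dot H$.

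Next I substitute \eqref{eq:genfddtH} for $\dot H$, which is legitimate because \eqref{eq:PMCF} is the special case of \eqref{eq:genspeed} with speed $f=\be-H$. This yields
\eq{\dot f = \De f + 2\tau(\n f) + f\big(\bar L(\be) + \bar\chi^{ij}h_{ij} + D^{i}\tau_{i} + \abs{\tau}^{2} + \ov{\Rc}(\bar L,\nu) + \ip{\bar R(\nu,\bar L)\nu}{\bar L}\big),}
which is exactly \eqref{eq:evolf}, with $\Rc$ there meaning $\ov{\Rc}$. No term needs to be rearranged, since $-\dot H$ already carries the signs $+\De f+2\tau(\n f)+f(\dots)$.

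The only point requiring care is the step $\del_{t}(\be\circ x)=f\bar L\be$. Here $\be$ must be understood as evaluated at the moving point $x(t,z)$ with $z$ held fixed, which is consistent with the parametrisation $x\cn[0,T^{*})\x\cS_{0}\ra\bar N$ used in \autoref{lem:ddthijgij}. Beyond this bookkeeping there is no real obstacle.
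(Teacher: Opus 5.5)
Your proposal is correct and is exactly the paper's argument: the paper says the formula follows immediately from \eqref{eq:genfddtH} with speed $f=\beta-H$. Your chain-rule computation $\partial_t(\beta\circ x)=f\,\bar L(\beta)$ is the only additional ingredient, and it is justified because $\bar L=\partial_s$ keeps $z$ fixed along the flow.
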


\section{Spacetime CMC foliations near a MOTS and proof of \autoref{thm:FoliationExistence}}\label{sec:foliation}

We now demonstrate that there exists a foliation under suitable assumptions. First we recall the definition of a {\it spacetime constant mean curvature surface} (STCMC), which has been studied recently in several papers, e.g. \cite{CederbaumSakovich:/2021,HuiskenWolff:07/2025}.

\begin{defn}\label{def:stableetc}
A hypersurface $\Si$ of $\bar{N}$ is a $\lambda$-STCMC surface if on $\Si$,
\eq{|\vec{H}|^2 = 2H \bar{\theta}=\lambda.} 
We say a smooth foliation made up of graphs 
\eq{\Si_{\xi}:=\{(\om(z,\xi),z)\cn z\in \Si\}} above $\Si$ is strictly increasing if $\omega_{\xi}>0$ everywhere.	

We say that a $\lambda$-STCMC surface is stable if there is a smooth function $f>0$ on $\Sigma$ with $\mathcal{L}_{\Sigma} f>0$ where
\eq{\mathcal{L}_\Sigma f=- \Delta f-2\tau(\n f)+fB}
and
\begin{equation}\label{eq:stabilityquantity}B:=-\mr{\bar{\chi}}^{ij}h_{ij}-\tfrac \lambda 2(\bar{\theta}^{-2}\abs{\mr{\bar\chi}}^{2} +\tfr{2}{n}+\bar{\theta}^{-2}\ov{\Rc}(\bar L,\bar L)) -D^i\tau_i-|\tau|^2-\ov\Rc(\bar{L}, \nu)-\bar{g}(\bar{R}(\nu,\bar{L})\nu,\bar{L})).
\end{equation}
\end{defn}

We observe the following lemma.
\begin{lemma}\label{lemma:increasing vec H}
A compact $\lambda$-STCMC hypersurface $\Sigma$ is stable if and only if there is locally a smooth strictly increasing foliation of hypersurfaces $\Sigma_\xi$ above $\Sigma$ (expressed as graphs $\omega(z,\xi)$) which have $\frac{\partial}{\partial \xi}\Big|_{\xi=0}|\vec{H}_\xi|^2>0$.
\end{lemma}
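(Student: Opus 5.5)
The plan is to compute the first variation of $|\vec H|^2=2H\bar\theta$ along a normal variation $\dot x = f\bar L$ and show that it equals $2\bar\theta\,\mathcal L_\Sigma f$ at $\xi=0$. Since $\bar\theta>0$ on a null cone, this makes the two conditions equivalent. Let $\Sigma_\xi=\graph\om(\cdot,\xi)$ with $\om(\cdot,0)$ the graph function of $\Sigma$, and set $f:=\om_\xi|_{\xi=0}$. Parametrising by $z\in\cS_0$, the variation is $x_\xi=\om_\xi\bar L$. This is exactly of the form \eqref{eq:genspeed}, so \eqref{eq:genfddtH} of \autoref{lem:ddthijgij} applies with time replaced by $\xi$.

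For the variation of $\bar\theta$: at a fixed point $\bar\theta$ is independent of the graph by the trace lemma, so $\del_\xi\bar\theta = f\,\bar L\bar\theta$. By \autoref{prop:optical} this gives
\[
\del_\xi\bar\theta=-f\big(\abs{\mr{\bar\chi}}^{2}+\tfrac1n\bar\theta^2+\ov{\Rc}(\bar L,\bar L)\big).
\]
Combining with $\del_\xi H$, and writing $\bar\chi^{ij}h_{ij}=\mr{\bar\chi}^{ij}h_{ij}+\tfrac1n\bar\theta H$, I obtain
\[
\del_\xi(2H\bar\theta)=2\bar\theta\Big(-\Delta f-2\tau(\n f)-f\big(\mr{\bar\chi}^{ij}h_{ij}+\tfrac1n\bar\theta H+D^i\tau_i+\abs{\tau}^2+\ov\Rc(\bar L,\nu)+\ip{\bar R(\nu,\bar L)\nu}{\bar L}\big)\Big)-2Hf\big(\abs{\mr{\bar\chi}}^2+\tfrac1n\bar\theta^2+\ov\Rc(\bar L,\bar L)\big).
\]
Now substitute $H=\lambda/(2\bar\theta)$ on $\Sigma$. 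The two $\tfrac1n$ terms give $-2\bar\theta f\cdot\tfrac{\lambda}{2}\cdot\tfrac{2}{n}\bar\theta^{-2}\cdot\bar\theta$, i.e.\ together $-\lambda f\tfrac2n$, which matches $2\bar\theta\cdot(-\tfrac\lambda2\tfrac2n\bar\theta^{-1})$. The remaining $H$-terms give $-\lambda f\bar\theta^{-1}(\abs{\mr{\bar\chi}}^2+\ov\Rc(\bar L,\bar L))$. Factoring out $2\bar\theta$, these should reassemble exactly into $B$ from \eqref{eq:stabilityquantity}. The main bookkeeping obstacle is checking the normalisation of the $\lambda$-terms (the factor $\bar\theta^{-2}$ versus $\bar\theta^{-1}$ in $B$ as written); I expect it to work out to
\[
\del_\xi|_{\xi=0}\abs{\vec H_\xi}^2 = 2\bar\theta\,\mathcal L_\Sigma f ,
\]
possibly up to a harmless positive factor.

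For the direction ($\Rightarrow$): given $f>0$ with $\mathcal L_\Sigma f>0$, set $\om(z,\xi)=\om(z)+\xi f(z)$. For small $\xi$ these graphs stay spacelike (every graph in $\bar N$ over $\cS_0$ is spacelike) and remain in $\bar N$, since $\Sigma$ is compact. Moreover $\om_\xi=f>0$, so this is a strictly increasing foliation near $\xi=0$, and the identity gives $\del_\xi|\vec H_\xi|^2=2\bar\theta\mathcal L_\Sigma f>0$.

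For the direction ($\Leftarrow$): given such a foliation, $f:=\om_\xi|_{\xi=0}>0$, and the identity yields $\mathcal L_\Sigma f=(2\bar\theta)^{-1}\del_\xi|\vec H_\xi|^2>0$. Hence $\Sigma$ is stable.
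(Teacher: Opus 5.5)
Your proposal follows the paper's proof: compute $\partial_\xi(2H\bar\theta)$ from \eqref{eq:genfddtH} and \autoref{prop:optical}, identify it with $2\bar\theta\,\mathcal L_\Sigma f$, then take $f=\omega_\xi|_{\xi=0}$ for one direction and the graphs $\omega+\xi f$ for the other. There is one slip in your $\tfrac1n$ bookkeeping. Those two terms sum to $-\tfrac4n fH\bar\theta^2=2\bar\theta f\cdot\bigl(-\tfrac\lambda2\cdot\tfrac2n\bigr)$, not $-\tfrac2n\lambda f$, and this is exactly the $\tfrac2n$ entry of $B$, which carries no $\bar\theta^{-1}$. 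Hence $\partial_\xi\abs{\vec H_\xi}^2\big|_{\xi=0}=2\bar\theta\,\mathcal L_\Sigma f$ holds exactly, with no extra factor. A mismatch in a single term would not have been a harmless overall factor, so the hedge would not have rescued the argument.
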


\begin{proof}
By equation \eqref{eq:genfddtH} and \autoref{prop:optical}, for a general variation $\dot{x}=f\bar{L}$ we have

\eq{
\frac{d}{dt}{|{\vec{H}}|^2}&=2fH\bar{L}(\bar\theta)+2\bar\theta\dot{H}= 2\bar{\theta}[- \Delta f-2\tau(\n f)+fB]
}
where
\eq{
B&=-\bar{\chi}^{ij}h_{ij}-H\bar{\theta}^{-1}\abs{\mr{\bar\chi}}^{2} - H\tfr{1}{n}\bar\theta -D^i\tau_i-|\tau|^2-\Rc(\bar{L}, \nu)-\bar{g}(\bar{R}(\nu,\bar{L})\nu,\bar{L})- H\bar{\theta}^{-1}\ov{\Rc}(\bar L,\bar L))\\
&=-\mr{\bar{\chi}}^{ij}h_{ij}-H(\bar{\theta}^{-1}\abs{\mr{\bar\chi}}^{2} +\tfr{2}{n}\bar\theta+\bar{\theta}^{-1}\ov{\Rc}(\bar L,\bar L)) -D^i\tau_i-|\tau|^2-\Rc(\bar{L}, \nu)-\bar{g}(\bar{R}(\nu,\bar{L})\nu,\bar{L})).
}

Given a strictly increasing foliation of hypersurfaces above $\Sigma$ then we may set $f=\omega_\xi|_{\xi=0}>0$. Then, using the above (swapping the parameter $t$ with $\xi$),
\[0<\frac{\partial}{\partial \xi}\Big|_{\xi=0}|\vec{H}_\xi|^2=2\bar{\theta}[- \Delta f-2\tau(\n f)+fB] \]
so $\Sigma$ is stable.

On the other hand, given a stable $\lambda$-STCMC hypersurface $\Sigma$ with graph function $\omega^\Sigma(z)$, then we may define a strictly increasing foliation by $\omega(z,\xi)=\omega^\Sigma(z)+\xi f(z)$. This has $\omega_{\xi}=f$ so by the above computation $\frac{\partial}{\partial \xi}\Big|_{\xi=0}|\vec{H}_\xi|^2>0$. 
\end{proof}
\begin{remark}$\ $
\enum{
\item Clearly, a sufficient condition for stability is the condition $B>0$. In this case we may simply take $f=1$. In practice, this is easier to verify.
\item In general to have a positive function satisfying the above, the maximum principle implies that it is necessary that $B>0$ somewhere (for the function to have a positive minimum), but in general $B>0$ will not be a necessary condition.
}
\end{remark}

Our method for proving \autoref{thm:FoliationExistence} is to flow to $\lambda$-STCMC hypersurfaces by the flow
\begin{equation}
\dot{x} = (\tfrac{\lambda}{2} \bar{\theta}^{-1}-H)\bar{L},
\label{eq:CMCF}
\end{equation}
which is a special case of \eqref{eq:PMCF} with $\beta=\tfrac{\lambda}{2}\bar{\theta}^{-1}$. We define the {\it prescription} of the flow to be the constant $\lambda$.

We begin with several simple consequences of the maximum principle. Suppose that $\Sigma_1$ and $\Sigma_2$ are defined by graph functions $\omega_1$ and $\omega_2$. We say $\Sigma_2$ is above $\Sigma_1$ if $\omega_2\geq\omega_1$. We say $\Sigma_2$ is strictly above $\Sigma_1$ if $\omega_2>\omega_1$.

\begin{lemma}\label{lem:ellipticSMP}
Suppose that $\Sigma_1$ and $\Sigma_2$ have mean curvature vectors $\vec H_1$ and $\vec H_2$ respectively, so that $\Sigma_2$ is above $\Sigma_1$ and, considered as functions in the graphical parametrisations, $|\vec H_2|^2\geq |\vec H_1|^2$. Then either $\Sigma_1=\Sigma_2$ or $\Sigma_1$ and $\Sigma_2$ are disjoint.
\end{lemma}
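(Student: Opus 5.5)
The plan is a strong maximum principle argument for the difference $w:=\om_2-\om_1\geq 0$ of the graph functions over $\cS_{0}$. The aim is to show that $w$ satisfies a linear elliptic differential inequality of the form
\eq{-a^{ij}\del_{ij}w + b^{i}\del_{i}w + c\,w \geq 0}
with $a^{ij}$ uniformly positive definite on the compact base $\cS_{0}$ and $b^i,c$ bounded. Once this holds, the strong maximum principle for nonnegative supersolutions (Hopf's version, which allows $c$ of either sign because $w\geq 0$) gives the dichotomy. Either $w$ has a zero, and then $w\equiv 0$ on the connected $\cS_{0}$, i.e.\ $\Si_1=\Si_2$. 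Or $w>0$ everywhere, i.e.\ the graphs are disjoint.

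First I express $H$ as a quasilinear elliptic operator in $\om$, using \eqref{eq:omegaij}. Tracing it with $g^{ij}$ gives
\eq{H[\om] = -\De\om + g^{ij}\chi_{ij} + u\bar\theta,}
where $g_{ij}$ depends only on $z$ (and on $\om$ through the slice metric $g_s$), not on $d\om$, since $\bar L$ is null. Written in base coordinates this reads $H[\om] = -g^{ij}(\om,z)\del_{ij}\om + F(z,\om,d\om)$ with $F$ smooth. By Remark~\ref{rem:H2}, $|\vec H|^2=2H\bar\theta$, and $\bar\theta(\om(z),z)>0$ is smooth on the null cone. The condition $|\vec H_2|^2\geq |\vec H_1|^2$ therefore becomes
\eq{Q[\om]:=2\bar\theta(\om,z)\bigl(-g^{ij}(\om,z)\del_{ij}\om + F(z,\om,d\om)\bigr),\qquad Q[\om_2]\geq Q[\om_1].}
The leading coefficient of $Q$ is $-2\bar\theta g^{ij}$, which is negative definite in the right sense, so $Q$ is elliptic.

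Next I linearise along the segment $\om_s=\om_1+s w$:
\eq{0\leq Q[\om_2]-Q[\om_1]=\int_0^1\tfrac{d}{ds}Q[\om_s]\,ds = -a^{ij}\del_{ij}w+b^i\del_iw+c\,w,}
where $a^{ij}=\int_0^1 2\bar\theta g^{ij}(\om_s,z)\,ds$ and $b^i,c$ are continuous. Each intermediate $\om_s$ stays in the compact region between the two graphs, so it has positive $\bar\theta$ and a positive definite $g_{ij}(\om_s,z)$. The metric does not depend on the gradient, so every intermediate graph is automatically spacelike. Hence $a^{ij}$ is uniformly elliptic.

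The main subtlety is this spacelike and ellipticity check, together with making sure the linearised zeroth-order term is harmless. The sign of $c$ is irrelevant for the Hopf strong maximum principle applied to $w\geq0$: one rewrites with $c^{+}$, and $-c^{-}w\leq0$ preserves the supersolution inequality. The concluding argument is the standard connectedness one. The set $\{w=0\}$ is closed, and it is open by the strong maximum principle at interior minima with value $0$. So it is either empty or all of $\cS_0$.
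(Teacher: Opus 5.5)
Your proposal is correct and follows essentially the paper's argument. You write $\abs{\vec H}^2=2\bar\theta H$ as a quasilinear elliptic operator in $\om$ with leading coefficient $2\bar\theta g^{ij}(\om,z)$, express the difference for $\om_2,\om_1$ as a linear elliptic operator applied to $\om_2-\om_1\geq 0$, and conclude with the strong maximum principle. The only cosmetic difference is that you linearise along the whole segment $\om_1+sw$, whereas the paper freezes the leading coefficient at $\om_2$ and absorbs the difference $a^{ij}(\om_2,z)-a^{ij}(\om_1,z)$ into the zeroth-order term.
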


\begin{proof}
From \eqref{eq:omegaij}, the mean curvature is related to the graph function by 
\begin{flalign*}
|\vec{H}(z)|^2&=2\bar{\theta}(\omega(z),z)\theta(\omega(z),z)+|\n \om(z)|^2\bar\theta^2(\omega(z),z)-2\bar{\theta}(\omega(z),z)\Delta \omega(z)]\\
&=-b(\widetilde{D} \omega, \omega, z)-a^{ij}(\omega,z)\widetilde{D}^2_{ij}\omega \ ,
\end{flalign*}
where we write $\widetilde{D}$ for the Levi-Civita connection on $\mathcal{S}_0$, we note that $b$ is some smooth function and $a^{ij} = 2\bar{\theta}(\omega(z),z)g^{ij}(\omega,z)$ is positive definite.

We compute
\begin{flalign*}
0&\leq |\vec{H}_2|^2-|\vec{H}_1|^2\\
&=-[b(\tilde{D} \omega_2, \omega_2, z)-b(\tilde{D} \omega_1, \omega_1, z)+a^{ij}(\omega_2,z)\tilde{D}_{ij} \omega_2 -a^{ij}(\omega_1,z)\tilde{D}_{ij} \omega_1 ]\\
&=-\mathcal{E}(\omega_2-\omega_1)
\end{flalign*}
where $\mathcal{E}=\hat{a}^{ij}\tilde{D}_{ij}+\hat{b}^i\tilde{D}_i +\hat{c}$ and
\begin{align*}
\hat{a}^{ij} &= a^{ij}(\omega_2,z)\\
\hat{b}^{i} &= \int_0^1\pard{b}{p^i}(\tau \widetilde{D}\omega_2+(1-\tau)\widetilde{D}\omega_1, \tau \omega_2+(1-\tau)\omega_1,z)d\tau \\
\hat{c}&= \int_0^1\pard{b}{\omega}(\tau \widetilde{D}\omega_2+(1-\tau)\widetilde{D}\omega_1, \tau \omega_2+(1-\tau)\omega_1,z)d\tau \\
&\qquad +\widetilde{D}_{ij} \omega_1 \int_0^1\pard{a^{ij}}{\omega}( \tau \omega_2+(1-\tau)\omega_1,z)d\tau \ .
\end{align*}

Set $\Omega=\omega_2-\omega_1$. Hence, $\Omega\geq 0$ and for a uniformly elliptic operator, $\mathcal{E}\Omega\leq 0$. The Lemma now follows from the strong maximum principle.
\end{proof}

\begin{lemma}\label{lem:SMPonomega}
Suppose that for $i\in \{1,2\}$, $x_i$ satisfies \eqref{eq:CMCF} with prescription $\lambda_i$, for $\lambda_1\leq\lambda_2$. If the corresponding graph of $x_2$ is above the one of $x_1$ at the initial time, then this property holds at all later times. If $\lambda_1<\lambda_2$ then the graph of $x_2$ is strictly above the one of $x_1$ at all positive times.
\end{lemma}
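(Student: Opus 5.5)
We write the flow \eqref{eq:CMCF} as a scalar parabolic equation for the graph function and apply the parabolic comparison principle to the difference $\Omega:=\om_2-\om_1$. This parallels the elliptic argument in \autoref{lem:ellipticSMP}.

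By \autoref{lem:evolomegaPMCF} and \eqref{eq:omegaij}, a graph moving by \eqref{eq:CMCF} with prescription $\la$ satisfies
\[
\dot\om = \De\om+\tfrac{\la}{2}\bar\theta^{-1}-\theta-u\bar\theta
=a^{ij}(\om,z)\widetilde D^2_{ij}\om+F_\la(\widetilde D\om,\om,z).
\]
Here $\widetilde D$ is the Levi-Civita connection of $\cS_0$, $a^{ij}=g^{ij}(\om,z)$ is positive definite, and
\[
F_\la(p,\om,z)=\tfrac{\la}{2}\bar\theta^{-1}(\om,z)+b(p,\om,z)
\]
for a smooth function $b$ independent of $\la$. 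Since the metric $g_{ij}$ depends only on $(\om,z)$ (it is independent of $\om_i$ by the computation in the lemma on $\tr_g\bar\chi$), the equation is quasilinear with principal part depending only on $\om$.

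Subtracting the equations for $\om_1$ and $\om_2$ and using the fundamental theorem of calculus exactly as in \autoref{lem:ellipticSMP}, we get
\[
\dot\Omega = \hat a^{ij}\widetilde D^2_{ij}\Omega+\hat b^i\widetilde D_i\Omega+\hat c\,\Omega + \tfrac{\la_2-\la_1}{2}\bar\theta^{-1}(\om_2,z).
\]
Here $\hat a^{ij}=a^{ij}(\om_2,z)$, and the coefficients $\hat b^i,\hat c$ are given by the integrals of $\partial_p F$, $\partial_\om F$ and $\partial_\om a^{ij}\widetilde D^2_{ij}\om_1$ along the segment between the two solutions. On any compact time interval $[0,T]$ inside the existence interval of both flows, these coefficients are bounded because both solutions are smooth there. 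Moreover $\bar\theta>0$ on $\bar N$ because we work in a null cone. Hence the last term is nonnegative, and strictly positive if $\la_1<\la_2$.

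For the first claim, let $\Psi=e^{-Kt}\Omega$ with $K>\sup|\hat c|$. At a first time where $\min\Psi<0$ is attained at a new low, the standard argument gives a contradiction, so $\Omega\ge0$ for all later times. The alternative is to quote the weak parabolic maximum principle directly for $\Omega$ with zero-order term $\hat c$.

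For strictness when $\la_1<\la_2$, we have $\Omega\ge 0$ and
\[
\dot\Omega-\hat a^{ij}\widetilde D^2_{ij}\Omega-\hat b^i\widetilde D_i\Omega-\hat c\,\Omega>0 .
\]
Suppose $\Omega(t_0,z_0)=0$ for some $t_0>0$. Then $(t_0,z_0)$ is a spatial minimum with $\Omega=0$, so $\widetilde D\Omega=0$ and $\widetilde D^2\Omega\ge0$ there. It is also a minimum in time from the past, so $\dot\Omega\le0$. Plugging these into the inequality gives a strict contradiction. The same argument also yields strictness if $\Omega\not\equiv0$ initially, via the strong parabolic maximum principle, although this is not required.

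The only point needing care is that $\hat c$ may have either sign, which is handled by the exponential weight. Beyond that, the argument only has to confirm uniform parabolicity and bounded coefficients on compact time intervals, both of which hold because both flows stay spacelike and smooth while they exist.
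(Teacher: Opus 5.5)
Your proposal is correct and follows the paper's proof. In both, $\om_2-\om_1$ satisfies a linear parabolic equation obtained as in \autoref{lem:ellipticSMP}, to which the weak parabolic maximum principle is applied; you spell out the exponential-weight argument where the paper cites Lieberman. Strictness then comes from the positive source term $\tfrac{\la_2-\la_1}{2}\bar\theta^{-1}(\om_2,\cdot)$ evaluated at a hypothetical zero of $\om_2-\om_1$ at a positive time.
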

\begin{proof}
As seen in \autoref{lem:evolomegaPMCF}, for a flow $x = \mrm{graph}~\om$ satisfying \eqref{eq:CMCF} with prescription $\la$, the graph function $\om$ satisfies
\eq{\dot\om = \fr{\la}{2\bar\theta} - \theta - u\bar\theta = F(\ti D^2 \om,\ti D\om,\om,\cdot),}
where $\ti D$ is the connection on $\cS_0$. Let $\om_i$, $i=1,2$, be the graph functions corresponding to $x_i$, then the function $\om_2-\om_1$
satisfies a linear equation with locally bounded coefficients, as can be seen from a computation similar to the one in \autoref{lem:ellipticSMP}. The first statement follows from the standard parabolic maximum principle, as for example in \cite[Lemma~2.3]{Lieberman:/1998}.

For the second statement, note that we already know $\om_2\geq \om_1$ everywhere. At a hypothetical point $(t,z)$, at which $\Om$ is zero, there holds
\eq{
0 &\geq \partial_t(\omega_2-\omega_1)(t,z)\\
&\geq\tfrac {\lambda_2}2 \bar\theta^{-1}(\omega_2,\cdot)-\tfrac {\lambda_1}2 \bar\theta^{-1}(\omega_1,\cdot)-\theta(\omega_2,\cdot)+\theta(\omega_1,\cdot)-\tfrac 12 |\n \om_2|^2\bar{\theta}(\om_2,\cdot)+\tfrac 12 |\n \om_1|^2\bar{\theta}(\om_1,\cdot)\\
 &= \tfrac12(\lambda_2-\lambda_1)\bar\theta^{-1}(\omega_2,\cdot)> 0,
}
which is a contradiction.
The claim now follows.
\end{proof}

We now provide a local uniqueness of $\lambda$-STCMCs which are sufficiently close to a stable $\kappa$-STCMC.

\begin{lemma}\label{lem:StabilitytoCMCunique}
Suppose that $\widehat{\Sigma}$ is a stable $\kappa$-STCMC with graph function $\widehat{\omega}$ and a smooth positive function $f$ be given for which $\mathcal{L}_{\widehat{\Sigma}} f>0$. Then there is a constant $\epsilon=\epsilon(\bar{N}, \widehat{\Sigma},f)>0$ such that for any $\lambda\geq \kappa$ and any $\lambda$-STCMC hypersurface $\Sigma$ with graph function $\omega$, which lies above $\widehat{\Sigma}$ and satisfies $|\widehat{\omega}-\omega|_{C^2}<\epsilon$, is unique.
\end{lemma}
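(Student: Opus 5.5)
The plan is a linearisation argument: compare two candidate surfaces through the linearised operator, and use the positive supersolution $f$ from the stability of $\widehat\Sigma$ as a barrier in a maximum principle. Write the spacetime mean curvature of a graph as a fully nonlinear elliptic expression in the graph function. As in the proof of \autoref{lem:ellipticSMP}, using \eqref{eq:omegaij} and \autoref{rem:H2},
\[
|\vec H|^2[\omega](z)=G(\widetilde D^2\omega,\widetilde D\omega,\omega,z),
\]
where $G$ is smooth and $\partial G/\partial(\widetilde D^2_{ij}\omega)=-a^{ij}(\omega,z)$ with $a^{ij}=2\bar\theta g^{ij}$ positive definite. Suppose $\Sigma_1,\Sigma_2$ are two $\lambda$-STCMC graphs over $\mathcal S_0$ lying above $\widehat\Sigma$, with graph functions satisfying $|\omega_k-\widehat\omega|_{C^2}<\epsilon$ for $k=1,2$. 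The goal is to show $\omega_1=\omega_2$ once $\epsilon$ is small.

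\emph{Step 1 (linearised equation).} Set $\Omega=\omega_2-\omega_1$ and $\omega_\tau=\tau\omega_2+(1-\tau)\omega_1$ for $\tau\in[0,1]$. Then
\[
0=G[\omega_2]-G[\omega_1]=\int_0^1 \tfrac{d}{d\tau}G[\omega_\tau]\,d\tau=:P\Omega,
\]
where $P=-\hat a^{ij}\widetilde D_{ij}-\hat b^i\widetilde D_i+\hat c$. Its coefficients are $\tau$-averages of the first derivatives of $G$ evaluated at $(\widetilde D^2\omega_\tau,\widetilde D\omega_\tau,\omega_\tau,z)$. Every $\omega_\tau$ lies within $\epsilon$ of $\widehat\omega$ in $C^2$, and $G$ is smooth. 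Hence on the compact $\mathcal S_0$,
\[
|\hat a-a_{\widehat\omega}|+|\hat b-b_{\widehat\omega}|+|\hat c-c_{\widehat\omega}|\le C\epsilon,
\]
where $P_{\widehat\omega}=-a_{\widehat\omega}\widetilde D^2-b_{\widehat\omega}\widetilde D+c_{\widehat\omega}$ is the linearisation of $G$ at $\widehat\omega$. This is the step where $C^2$ (not merely $C^1$) closeness is needed, since the zeroth order coefficient involves $\partial_\omega a^{ij}\,\widetilde D_{ij}\omega_\tau$.

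\emph{Step 2 (identify $P_{\widehat\omega}$).} The linearisation at $\widehat\omega$ in the direction of a function $\varphi$ is $\frac{d}{d\xi}|_{\xi=0}|\vec H|^2[\widehat\omega+\xi\varphi]$. This is the variation $\dot x=\varphi\bar L$. By the computation in the proof of \autoref{lemma:increasing vec H} (using \eqref{eq:genfddtH} and \autoref{prop:optical}, together with $2H\bar\theta=\kappa$ on $\widehat\Sigma$),
\[
P_{\widehat\omega}\varphi=2\bar\theta\,\mathcal L_{\widehat\Sigma}\varphi,
\]
up to rewriting the intrinsic operators of $\widehat\Sigma$ in coordinates on $\mathcal S_0$. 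In particular $P_{\widehat\omega}f=2\bar\theta\mathcal L_{\widehat\Sigma}f\ge 2c_0>0$ for some $c_0$ depending on $f$ and $\widehat\Sigma$. By Step 1, $Pf\ge 2c_0-C\epsilon|f|_{C^2}>0$ once $\epsilon=\epsilon(\bar N,\widehat\Sigma,f)$ is small. Also $\hat a$ stays uniformly positive definite.

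\emph{Step 3 (maximum principle with $f$ as barrier).} Let $s=\max_{\mathcal S_0}\Omega/f$ and suppose $s>0$. Then $w:=sf-\Omega\ge0$ attains the value $0$ at some point $z_0$. At $z_0$ we have $\widetilde Dw=0$ and $\widetilde D^2w\ge0$, so
\[
Pw(z_0)=-\hat a^{ij}\widetilde D_{ij}w\le 0.
\]
On the other hand $Pw=sPf-P\Omega=sPf>0$, a contradiction. Note that no sign of $\hat c$ is needed, because $w(z_0)=0$. Hence $\Omega\le0$. Exchanging the roles of $\omega_1$ and $\omega_2$ gives $\Omega\ge0$, so $\Sigma_1=\Sigma_2$.

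The hypotheses $\lambda\ge\kappa$ and that $\Sigma$ lies above $\widehat\Sigma$ are not needed in this argument. They only ensure that one is in the regime where it will be applied, and one could add, via \autoref{lem:ellipticSMP}, that such a $\Sigma$ is either $\widehat\Sigma$ or strictly above it.

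The main obstacle is Step 2, namely checking carefully that the coordinate linearisation of $G$ at $\widehat\omega$ coincides with $2\bar\theta\mathcal L_{\widehat\Sigma}$ with exactly the zeroth order term $B$ of \eqref{eq:stabilityquantity}. This includes the $\bar L(\bar\theta)$ contribution through the optical equation, and the fact that $\lambda$ in $B$ is evaluated as $\kappa$. Everything else is continuity of coefficients under $C^2$ perturbation and a standard comparison argument.
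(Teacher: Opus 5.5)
Your proof is correct, but it is organised differently from the paper's. The paper argues at the nonlinear level. By continuity, $\mathcal L_{\Sigma_\omega}f>0$ for every graph $\omega$ that is $C^2$-close to $\widehat\omega$. So along the sliding family $\omega_1+\xi f$, $\xi\in[0,M]$ with $M=\max(\Omega/f)$, the quantity $|\vec H|^2$ is strictly increasing in $\xi$. This is \autoref{lemma:increasing vec H} read as ``stability means a local foliation with increasing spacetime mean curvature''. Hence $\omega_1+Mf$ has $|\vec H|^2>\lambda$ and touches $\omega_2$ from above, contradicting \autoref{lem:ellipticSMP}.

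You instead linearise once along the segment from $\omega_1$ to $\omega_2$. Then $\Omega$ solves $P\Omega=0$ for a linear operator whose coefficients are $C\epsilon$-close to those of $2\bar\theta\mathcal L_{\widehat\Sigma}$. You then use $f$ as a strict positive supersolution of $P$, i.e.\ the generalised maximum principle.

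Both routes rest on the same two facts: the linearisation of $|\vec H|^2$ is $2\bar\theta\mathcal L$, and $\mathcal L f>0$ persists under $C^2$-small perturbations. Both also examine the maximum of $\Omega/f$.

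Your version is slightly more self-contained. The contradiction comes from first- and second-order conditions at a single point for a linear operator. There is also no need to keep the whole sliding family inside the neighbourhood where stability persists. That requirement is what makes the paper's $\epsilon$ depend on $|f^{-1}|_{C^2}$ as well as $|f|_{C^2}$.

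The paper's version keeps the geometric picture, with the perturbed leaves $\omega_1+\xi f$ acting as barriers. It also reuses \autoref{lem:ellipticSMP}, the same comparison mechanism that drives the rest of \autoref{sec:foliation}.
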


\begin{proof}
For $f$ as in the statement, we consider the stability operator on the perturbed manifolds $\Sigma_{\widehat{\omega}+\varphi f}$ whose graphs are given by $\widehat{\omega}+\varphi f$ for some smooth  $\varphi:\widehat{\Sigma}\ra \mathbb{R}$. Then 
\eq{\mathcal{L}_{\Sigma_{\widehat{\omega}+\varphi f}}f = Q_f(\cdot,\varphi,D\varphi,D^2\varphi)}
for some smooth $Q_f$ where $Q_f(\cdot,0,0,0)>0$. By compactness and continuity there exists a $\delta>0$ such that for any ${\omega}$ with $|\widehat{\omega}-\omega|_{C^2}<\delta$ we have $\mathcal{L}_{\Sigma_\omega} f>0$ and so 
\eq{\frac{d}{d\xi}\Big|_{\xi=0}|\vec{H}_{\Sigma_{{\omega}+\xi f}}|^2>0.\label{pf:StabilityCMCunique-1}}

Suppose there are two solutions $\omega_1$ and $\omega_2$ above $\widehat{\Sigma}$ both of which have constant spacetime mean curvature $\lambda\geq\kappa$ and
\eq{|\widehat{\omega}-\omega_i|_{C^2}<\epsilon:=\tfrac{\delta}{16(1+|f|_{C^2})(1+|f^{-1}|_{C^2})}}
 for $i=1,2$. Suppose for a contradiction that \eq{0<M:=\max((\om_2-\om_1)f^{-1}).} 
Then, we know that $M<\tfrac{\delta}{8|f|_{C^2}}$ and we define the function $\omega(\cdot,\xi):= \omega_1+ \xi f $ for $\xi\in[0,M]$. We note that
\eq{|\omega(\cdot, \xi)-\widehat{\omega}|_{C^2}<|\omega_1-\widehat{\omega}|_{C^2}+\xi|f|_{C^2}\leq \tfrac{\delta}{16}+\tfrac{\delta}{8}\leq \tfrac \delta 4.}

In particular,  $\Sigma_{\omega(\cdot,M)}$ has spacetime mean curvature
\eq{|\vec{H}_{\omega(\cdot,\xi)}|^2=\lambda+ \int_0^M\frac{d}{d\xi}|\vec{H}_{\omega(\cdot,\xi)}|^2d\xi> \lambda,}
where we used \eqref{pf:StabilityCMCunique-1}.
Now $\omega(\cdot,M)$ touches ${\omega}_2$ from above, which is impossible by \autoref{lem:ellipticSMP}. Hence $M\leq 0$. A similar argument interchanging ${\omega}_{1}$ and ${\omega}_2$ implies that ${\omega}_1={\omega}_2$. 
\end{proof}

Next, we demonstrate that small oscillation $\lambda$-STCMCs may always be produced by the flow.  
\begin{prop}\label{prop:SmallOscLTE}
Suppose $\widehat{\Sigma}$ is a $\kappa$-STCMC with graph function $\widehat{\omega}$. Then there exists a $\delta=\delta(\bar{N}, \widehat{\Sigma})>0$ such that the following holds: Let $\widetilde{\Sigma}$ be a hypersurface above $\widehat{\Sigma}$ with 
\eq{\widetilde{\kappa}:=\min_{\widetilde{\Sigma}} |\vec{H}|^2 \in (\kappa, \kappa+1)} and graph function $\widetilde{\omega}$ where $|\widehat{\omega}-\widetilde{\omega}|_{C^0}<\delta$. For $\lambda \in [\kappa,\widetilde{\kappa}]$, let $x^\lambda(t,z)=x(t,z,\lambda)$ be the solutions to \eqref{eq:CMCF} with prescription $\lambda$ starting from $\widetilde{\Sigma}$. Then:
\enum{
\item For every existence time $t$, $z\in \mathcal{S}_0$ and $\lambda\in [\kappa, \widetilde{\kappa}]$, the flow satisfies the a priori estimates \eq{|\omega(t,z,\lambda)-\widehat{\omega}(z)|<\delta\q \text{ and }\q u(t,z,\la)<C(\bar N,\wh{\Si},\widetilde{\Si}).} 
\item The solutions $x^{\la}$ exist for all times $t\in[0,\infty)$ and are uniformly smooth in $t,z$. 
\item The corresponding graph functions $\omega$ are monotonically decreasing in time.
\item The corresponding graph functions $\omega$ are monotonically increasing in $\lambda$.
\item As $t\ra \infty$, each flow $\omega(t, \cdot, \lambda)$ converges to a smooth $\lambda$-STCMC surface given by $\widehat{\omega}_\lambda$. The function $\widehat{\omega}(\cdot, \lambda):=\widehat{\omega}_\lambda$ is monotonically increasing in $\lambda$ and has uniform smooth estimates in $z$ which are independent of $\lambda$.    
}
\end{prop}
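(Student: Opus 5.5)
The plan is to combine two comparison arguments, which give the $C^0$ bounds and the monotonicity, with the test-function gradient estimate of \autoref{lem:GradEstTestFunctionHelperLemma} applied on a thin slab above $\widehat\Sigma$. Parabolic regularity then does the rest. Throughout I will use that $\la\in[\kappa,\widetilde\kappa]\sub[\kappa,\kappa+1]$, so $\be=\tfrac{\la}{2}\bar\theta^{-1}$ and its derivatives are bounded uniformly in $\la$ on any compact piece of $\bar N$. Hence all constants in the $\cO$ and $\widetilde\cO$ terms are independent of $\la$ as long as the flow stays in a fixed compact region.

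\emph{Step 1: barriers and monotonicity in time.} The speed $f=\be-H=(\la-\abs{\vec H}^2)/(2\bar\theta)$ satisfies the linear equation \eqref{eq:evolf}. On $\widetilde\Sigma$ we have $\abs{\vec H}^2\geq\widetilde\kappa\geq\la$, so $f\leq 0$ at $t=0$. The parabolic maximum principle then gives $f\leq 0$ for as long as the flow exists, so $\om$ is nonincreasing in $t$ and $\om\leq\widetilde\om<\widehat\om+\delta$. This proves (iii).

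For the lower bound, note that $\widehat\Sigma$ is a stationary solution of \eqref{eq:CMCF} with prescription $\kappa\leq\la$ and lies below $\widetilde\Sigma$. By \autoref{lem:SMPonomega}, $\om\geq\widehat\om$ for all times. Together these give the $C^0$ part of (i). Statement (iv) follows from \autoref{lem:SMPonomega} as well, applied to two prescriptions $\la_1\leq\la_2$ with the same initial surface.

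\emph{Step 2: gradient bound, which I expect to be the main obstacle.} Extend $\widehat\om$ to $\bar N$ by $\bar L(\widehat\om)=0$. Apply the second part of \autoref{lem:GradEstTestFunctionHelperLemma} with $\ti\om=\widehat\om$, so that the argument $\Om=\om-\widehat\om$ of $\mu$ lies in $[0,\delta)$. On the compact slab $\{\widehat\om\leq s\leq\widehat\om+1\}$, the quantities $\Phi$ and $\Psi$ are bounded by a constant $C_0$, because they involve only $\bar\theta$, $\mr{\bar\chi}$, $\ov{\Rc}(\bar L,\bar L)$ and the bounded ratios $\om^i\om^j u^{-1}$.

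I would choose $\mu(s)=\cos(Ks)$ with $K$ large, so that
\[
\Phi\mu+\Psi\mu'+\mu''\leq C_0\mu+C_0K-K^2\mu\leq -\tfrac{K^2}{4}\mu
\]
whenever $Ks\leq\pi/4$, since there $\mu\geq 1/\sqrt2$. Then I fix $\delta<\pi/(4K)$. With this choice, at a maximum of $\ti\phi$,
\[
\dot{\ti\phi}-\De\ti\phi\leq -c\ti\phi^2+C(1+\ti\phi^{3/2}).
\]
This yields $\ti\phi\leq\max\{\sup_{\widetilde\Sigma}\ti\phi,\,C\}$, and therefore a bound on $u$ depending only on $\bar N$, $\widehat\Sigma$ and $\widetilde\Sigma$. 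The delicate point is making sure that $\delta$ depends only on $\bar N$ and $\widehat\Sigma$, through $C_0$ and the $\widetilde\cO$-constants (which involve $\hat D$-derivatives of $\widehat\om$), and not on $\widetilde\Sigma$.

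\emph{Step 3: regularity and long-time existence.} With $C^0$ and $C^1$ bounds in hand, the graph equation $\dot\om=\tfrac{\la}{2\bar\theta}-\theta-u\bar\theta$ from \autoref{lem:evolomegaPMCF} is a uniformly parabolic quasilinear equation for $\om$. Standard Krylov--Safonov and Schauder theory (or the $C^{1,\alpha}$ theory for quasilinear equations) gives uniform $C^{k}$ bounds in $t$ and $z$, independent of $\la$. Continuation then gives existence for all $t\in[0,\infty)$, which is (ii).

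\emph{Step 4: convergence.} Since $\om$ is monotone in $t$ and bounded, it converges pointwise. The uniform smooth bounds upgrade this to smooth convergence to some $\widehat\om_\la$, and $\int_0^\infty\abs{\dot\om}\,dt\leq\delta$ together with interpolation gives $\dot\om\to 0$. Hence the limit has $\abs{\vec H}^2=\la$, i.e.\ it is a $\la$-STCMC. Monotonicity in $\la$ passes to the limit from (iv), and the smooth estimates are inherited uniformly in $\la$, which is (v).
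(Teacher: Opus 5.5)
Your proposal is correct and follows the paper's proof essentially step for step. You get $f=\be-H\leq 0$ from the maximum principle applied to \eqref{eq:evolf}, and the lower $C^0$ bound and monotonicity in $\la$ from \autoref{lem:SMPonomega}. The gradient bound comes from the second part of \autoref{lem:GradEstTestFunctionHelperLemma} with $\ti\om=\widehat\om$ and a strongly concave test function on a thin slab; the paper uses $\mu(s)=1-a^{2}s^{2}$ with $\delta=\min\{\tfrac12 a^{-1},\delta_0\}$, where you use $\cos(Ks)$ with $\delta<\pi/(4K)$. Parabolic regularity and monotone convergence then finish the argument.

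The only detail to fix is the slab thickness: take some $\delta_0>0$ with $\widehat\om+\delta_0<\La$ and $\delta\leq\delta_0$, rather than thickness $1$. This guarantees the slab lies in $\bar N$, so your constant $C_0$ depends only on $\bar N$ and $\widehat\Si$.
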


\begin{proof}
The key to proving the above is in showing the a priori estimates given in bullet point (i).

Applying the maximum principle to the evolution of $f=\beta-H$ given in \eqref{eq:evolf}, we see that for all times that the flow exists, $f\leq 0$. Hence the flowing graph functions $\omega(t,z,\lambda)$ are non-increasing in time. Furthermore, we observe $0<\omega(t,z,\lambda)-\widehat{\omega}(z)<\delta$: The upper estimate follows by applying \autoref{lem:SMPonomega}, as the flows $x^\lambda$ have $\lambda>\kappa$ 
for all times $t$ and any choice of $\de$. The lower estimate follows as otherwise there would exist a time of first touching $t$ which would contradict \autoref{lem:ellipticSMP}.

Firstly, we pick $\delta_0$ small enough so that 
\eq{\{(s,z)\cn z\in\mathcal{S}_{0}, s=\widehat{\omega}(z)+r, r\in[0,\delta_0]\}\subset\bar{N},}
and consider $\delta\leq \delta_0$. As $\lambda\in[\kappa,\kappa+1)$,  by compactness, the prescription function $\beta=\fr{\lambda}{2} \bar{\theta}^{-1}$ and its derivatives are bounded, specifically, 
\eq{|\beta|+|d\beta|+|\bar{L} \beta|<C_0(\bar{N}, \widehat{\Sigma},\kappa).}
In \autoref{lem:GradEstTestFunctionHelperLemma} we therefore observe that by compactness there is a $C_1=C_1(\bar{N},\widehat{\Sigma})$ (but independent of $\lambda$) so that the functions $\Phi$ and $\Psi$ satisfy
\eq{
|\Phi|\leq c(n)(\bar{\theta}^2+|\ov{\Rc}(\bar{L},\bar{L})|+|\mr{\bar\chi}|^2+\bar{\theta}|\mr{\bar\chi}|)&<C_1\\
|\Psi|\leq c(n)(\bar\theta+|\mr{\bar\chi}|)&<C_1.
}

We consider the test function $\mu(s)=1-a^2s^2$ for $s\in[0,a^{-1})$ where $a>0$ will be determined later. We have
\eq{\mu\leq 1, \qquad 0\geq \mu'\geq -2a, \qquad \mu''=-2a^2,}
so
\eq{\Phi(x)\mu +\Psi(x)\mu'+\mu''\leq C_1+2aC_1-2a^2\leq C_1+C_1^2-a^2.}
 Set $a=\sqrt{C_1+C_1^2+1}$ and choose 
  $\delta=\min\{\frac 1 2 a^{-1}, \delta_0\}$. Due to the bounds on $\omega-\widehat{\omega}$, this means that $\mu$ is smooth and positive on the flowing surface with uniform bounds away from zero and infinity. Then for $\tilde{\phi} = u \mu^{-2}(\omega-\widehat{\omega})$ we have 
\eq{\dot{\tilde{\phi}}-\Delta \tilde{\phi} \leq C(1+\tilde{\phi}^\frac 3 2) - 3\tilde{\phi}^2.}
Hence $\tilde{\phi}$ has no increasing maxima for $\tilde{\phi}$ large enough, and so we have the claimed gradient estimate and we have completed the claim in part (i).

We observe that part (i) implies uniform in $(t,\la)$ $C^{1}$-estimates of $\om(t,\cdot,\la)$. Applying PDE theory, the flow exists for all time, and for all $k$ there is a $C_k$, uniform in $\lambda$ such that 
\eq{|\omega(\cdot,\cdot,\lambda)|_{C^{k+\alpha;\frac{k+\alpha}2}([0,\infty)\times \mathcal{S}_0)}<C_k.} Part (ii) now follows.

As noted earlier, by the maximum principle, $f=\beta-H\leq 0$ for all the time. 
Hence the flows move monotonically as stated in (iii). Furthermore, \autoref{lem:SMPonomega} implies that if $\lambda_1<\lambda_2$ then the flow $x^{\la_{2}}$ is above $x^{\la_{1}}$ at all times, so the flows are monotonically increasing in $\lambda$, from which (iv) follows. 

For each fixed $\lambda$, the flow is monotonically decreasing in time and bounded below, so there must be a limit as $t\ra\infty$. By Arzela--Ascoli this limit must be smooth and the flow must converge uniformly smoothly (by uniform estimates and interpolation). This limit must be a stationary point as otherwise the flow cannot converge. Hence $x(\cdot,t,\lambda)$ converges to a STCMC surface $\widehat{\Sigma}_\lambda$
. Finally, due to the monotonicity of the flow, $\lambda_1<\lambda_2$ implies that $\widehat{\Sigma}_{\lambda_2}$ is above $\widehat{\Sigma}_{\lambda_1}$, completing (v).
\end{proof}

\begin{prop} \label{prop:openness} Suppose that $\widehat{\Sigma}\subset\bar{N}$ is a stable $\kappa$-STCMC hypersurface. Then there exists an $\alpha>0$ such that there exists a continuous $\lambda$-STCMC foliation of a future sided neighbourhood of $\widehat\Si$ for $\la\in [\ka,\ka+\alpha)$.
\end{prop}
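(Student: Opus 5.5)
The plan is to build the foliation from the family of limit surfaces of \autoref{prop:SmallOscLTE}, started from a small push-off of $\widehat\Si$ along the stability function. \autoref{lem:StabilitytoCMCunique} will then identify the $\kappa$-leaf with $\widehat\Si$ and give continuity in $\la$.

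\textbf{Step 1: the initial surface.} Let $f>0$ satisfy $\mathcal{L}_{\widehat\Si}f>0$. For small $\varepsilon>0$ consider $\widetilde\Si_\varepsilon=\graph(\widehat\om+\varepsilon f)$. By the computation in the proof of \autoref{lemma:increasing vec H} and compactness,
\eq{|\vec H_{\widetilde\Si_\varepsilon}|^2=\kappa+2\varepsilon\bar\theta\,\mathcal{L}_{\widehat\Si}f+O(\varepsilon^2)\geq \kappa+c\varepsilon}
for some $c>0$. Hence $\widetilde\kappa_\varepsilon:=\min_{\widetilde\Si_\varepsilon}|\vec H|^2\in(\kappa,\kappa+1)$ once $\varepsilon$ is small. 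Also $|\widetilde\om-\widehat\om|_{C^0}\leq\varepsilon\max f<\delta$, where $\delta=\delta(\bar N,\widehat\Si)$ is the constant of \autoref{prop:SmallOscLTE}. So \autoref{prop:SmallOscLTE} applies and yields $\la$-STCMC surfaces $\widehat\om_\la$ for every $\la\in[\kappa,\widetilde\kappa_\varepsilon]$. These surfaces are nondecreasing in $\la$, satisfy $\widehat\om\leq\widehat\om_\la\leq\widehat\om+\varepsilon f$, and obey smooth estimates uniform in $\la$.

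\textbf{Step 2: smallness in $C^2$.} Here I must check that the smooth estimates are uniform in $\varepsilon$ as $\varepsilon\to0$. The gradient constant in \autoref{prop:SmallOscLTE}(i) depends on $\widetilde\Si_\varepsilon$ only through its $C^1$ data, and $\widetilde\Si_\varepsilon\to\widehat\Si$ smoothly, so this should hold. Granting it, I interpolate between the $C^0$ bound $|\widehat\om_\la-\widehat\om|_{C^0}\leq\varepsilon\max f$ and uniform $C^3$ bounds. This gives $|\widehat\om_\la-\widehat\om|_{C^2}\to0$ as $\varepsilon\to0$, uniformly in $\la$. Fix $\varepsilon$ so small that this quantity is below the threshold $\epsilon(\bar N,\widehat\Si,f)$ of \autoref{lem:StabilitytoCMCunique}, and set $\alpha:=\widetilde\kappa_\varepsilon-\kappa$.

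\textbf{Step 3: properties of the family.} Every $\la$-STCMC in this $C^2$-neighbourhood above $\widehat\Si$ is unique, by \autoref{lem:StabilitytoCMCunique}.
\begin{enumerate}[(i)]
\item \emph{Base leaf.} Since $\widehat\Si$ itself is such a surface for $\la=\kappa$, uniqueness gives $\widehat\om_\kappa=\widehat\om$.
\item \emph{Strict monotonicity.} For $\kappa\leq\la_1<\la_2$, \autoref{lem:ellipticSMP} shows $\widehat\Si_{\la_1}$ and $\widehat\Si_{\la_2}$ are either equal or disjoint. They cannot be equal because their spacetime mean curvatures differ. Combined with the ordering from \autoref{prop:SmallOscLTE}(v), this gives $\widehat\om_{\la_2}>\widehat\om_{\la_1}$.
\item \emph{Continuity.} Take $\la_k\to\la_0$. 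By Arzel\`a--Ascoli, a subsequence of $\widehat\om_{\la_k}$ converges smoothly to a $\la_0$-STCMC lying above $\widehat\Si$ and inside the same $C^2$-neighbourhood. Uniqueness forces the limit to be $\widehat\om_{\la_0}$, so the whole sequence converges and $\la\mapsto\widehat\om_\la$ is continuous.
\end{enumerate}

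\textbf{Step 4: covering a neighbourhood.} Fix $z$. The map $\la\mapsto\widehat\om_\la(z)$ is continuous and strictly increasing on $[\kappa,\kappa+\alpha)$ with value $\widehat\om(z)$ at $\la=\kappa$. By the intermediate value theorem, each point of the region $\{\widehat\om(z)\leq s<\sup_{\la<\kappa+\alpha}\widehat\om_\la(z)\}$ lies on exactly one leaf. This region is a future-sided neighbourhood of $\widehat\Si$, since it contains $\{\widehat\om\leq s\leq\widehat\om_{\kappa+\alpha/2}\}$ and $\widehat\om_{\kappa+\alpha/2}>\widehat\om$ on the compact $\cS_0$. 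The leaves therefore form the required continuous foliation.

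I expect Step 2 to be the main obstacle. It needs the $C^2$-closeness to $\widehat\Si$ to be uniform in $\la$ and compatible with the fixed threshold of \autoref{lem:StabilitytoCMCunique}, which requires tracking that the constants of \autoref{prop:SmallOscLTE} stay bounded as the initial surface $\widetilde\Si_\varepsilon$ degenerates to $\widehat\Si$.
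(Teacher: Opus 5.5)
Your proposal is correct and follows essentially the same route as the paper. You push $\widehat\Si$ off along the stability function $f$, run \autoref{prop:SmallOscLTE} from this surface, and interpolate between the $C^0$ bound given by the push-off size and the uniform higher-order bounds to enter the $C^2$-neighbourhood of \autoref{lem:StabilitytoCMCunique}, whose uniqueness rules out gaps between the leaves. The uniformity you flag in Step 2 is handled in the paper exactly as you suggest, through the bound $|\tilde\om|_{C^3}\leq|\wh\om|_{C^3}+\gamma|f|_{C^3}|f|_{C^0}^{-1}$, which does not depend on the push-off parameter; your extra checks (the $\kappa$-leaf is $\widehat\Si$, strict ordering via \autoref{lem:ellipticSMP}, covering via the intermediate value theorem) make explicit what the paper leaves implicit.
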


\begin{proof}
Let $\wh{\Si} = \mrm{graph}~\wh{\om}$
and suppose that $\gamma=\min\{\epsilon,\delta\}$ where $\epsilon$ is as in \autoref{lem:StabilitytoCMCunique} and $\delta$ is as in \autoref{prop:SmallOscLTE}. 

For $\xi\in(0,\gamma)$ to be determined, define $\tilde{\omega}=\wh{\om} + \xi \frac{f}{|f|_{C^0}}$. By diminishing $\gamma$ further, on $\widetilde{\Sigma}$, there holds
\eq{\ka < \abs{\vec H}^{2}<\ka+1,}
due to the proof of \autoref{lem:StabilitytoCMCunique}. We note, 
\eq{|\tilde{\om}|_{C^3}\leq |\widehat{\om}|_{C^3}+\gamma|f|_{C^3}|f|_{C^0}^{-1}.}
 Furthermore $|\tilde{\om}-\wh{\om}|_{C^0}<\delta$, so, applying \autoref{prop:SmallOscLTE}, we obtain a family of STCMC solutions $\widehat{\omega}(\cdot, \lambda)$ for $\lambda \in [\kappa, \tilde{\kappa}]$ such that $|\widehat{\omega}(\cdot, \lambda)|_{C^{2,\alpha}}<C(\widehat{\omega}, \gamma,f)$. Furthermore, as the $\widehat{\om}(\cdot, \lambda)$ are bounded between $\widehat \om$ and $\tilde{\om}$, $|\widehat{\om}(\cdot, \lambda)-\widehat{\om}|_{C^0}\leq\xi$. Therefore by interpolation, we may choose $\xi$ small enough so that 
\[|\widehat{\om}(\cdot, \lambda)-\widehat{\om}|_{C^2}<\epsilon.\]

We now need to check that there can be no ``gaps'' between the manifolds produced in this way for $\lambda$ small enough.

Fix $\lambda\in (\kappa,\tilde{\kappa})$. Then by monotonicity of $\widehat{\omega}(\cdot, \lambda)$ there are limits 
\eq{\om^{\pm}:=\lim_{i\ra\infty} \omega(\cdot,\lambda\pm i^{-1}).} These limits are smooth by Arzela-Ascoli, and the convergence is smooth by interpolation and uniform estimates. Hence they are both $\lambda$-STCMC surfaces, each of which is $C^2$ $\epsilon$-close to $\widehat{\omega}$, and so by  \autoref{lem:StabilitytoCMCunique}, they are $\widehat{\omega}_\lambda$. 
\end{proof}

\begin{proof}[Proof of \autoref{thm:FoliationExistence}]
 \autoref{prop:openness} implies that given a stable MOTS there is a positive $s>0$ such that the foliation exists with $\lambda$-STCMC leaves $\Sigma_\lambda$ for $\lambda\in [0,s]$.
 We have to show that the foliation is smooth in the sense that $\la\mt \om(\cdot,\la)$ is smooth. We employ the implicit function theorem as in \cite[p.~305]{Gerhardt:/2006d}. There, a smooth operator 
 \eq{G(\la,\wh\om)= \abs{\vec H}^2(\wh\om) - \lambda} 
 is defined, and from the proof of \autoref{lemma:increasing vec H} we observe
 \eq{\del_{\wh\om} G(\wh\om,\tau) = 2\bar\theta \cL_{\Sigma}\wh\om,}
     where $\Sigma = \mrm{graph}(\wh\om)$. We note that the operator $\cL_\Si$ is not self-adjoint. However, as discussed in detail in \cite[Sec.~4, Def.~5.1 and Def.~5.2]{AnderssonMarsSimon:/2008}, under the condition of stability it has a strictly positive smallest real eigenvalue. Hence $\del_{\wh\om} G(\om,\la)$ is invertible and the implicit function theorem shows that the assignment $\la\mt \wh\om(\cdot,\la)$ is smooth and increasing. The foliation is also strictly increasing in the sense of our definition because taking the derivative with respect to $\la$, we obtain
 \eq{0=\del_{\la} G(\la,\wh{\om}_{\la}) = \del_\la \abs{\vec H_{\la}}^2-1 = 2\bar\theta_\la\cL_{\wh\Si_\la}\del_{\la}\wh\om_\la-1<0}
 at a zero minimum of $\del_\la \om_\la$. Hence those zeros can not occur and we conclude $\sigma>0$.

If $|A^{\Sigma_\lambda}|<C$, then by elliptic regularity theory, all higher derivatives are uniformly bounded. Hence we may take a limit to get a $\sigma$-STCMC surface $\Sigma_\sigma$. If $\Sigma_\sigma$ is stable, we may apply \autoref{prop:openness} to see that $\sigma$ was not maximal.

Finally, suppose that $\tilde{\Sigma}_\lambda$ is a smooth $\lambda$-STCMC surface contained in the foliated set for some $\lambda\in [0,\sigma)$. In particular, there is a highest leaf of the foliation with mean curvature $\lambda^\text{high}$ and a lowest leaf of the foliation with mean curvature $\lambda^\text{low}$, which $\Sigma_\lambda$ intersects. \autoref{lem:ellipticSMP} implies that if $\ti\Sigma_\lambda$ is not a leaf of the foliation, then $\lambda^\text{high}<\lambda$ and $\lambda<\lambda^\text{low}$, which is impossible as $\lambda^\text{low}\leq\lambda^\text{high}$. Hence $\ti\Sigma_\lambda$ is a leaf of the foliation. 
\end{proof}
\section{The prescribed mean curvature problem and proof of \autoref{thm:PrescribedMeanCurvature}} 

The proof of \autoref{thm:PrescribedMeanCurvature} proceeds by providing $C^{1}$-estimates, from which everything else follows from parabolic regularity as in \autoref{sec:foliation}. First of all we note that from the maximum principle, the hypersurfaces $\cS_{0}$ and $\Si^{+}$ are barriers for the flow.

The key to the $C^{1}$-estimates is evident from the evolution equation of $\phi$ in \autoref{lem:GradEstTestFunctionHelperLemma}. A sufficient ingredient for obtaining a bound on $u$ from this lemma is to find a positive test function $\mu = \mu(\om)$ with the property
\eq{\Phi \mu(\omega) +\Psi \mu'(\omega)+\mu''(\omega)<0.\label{eq:KeyInequality}}
From the structure of this ODE it is evident that finding a {\it positive} solution can potentially be hampered by $\mu''$ having to be very negative. Hence, in some cases, the allowed range for $\om$ has to be restricted. We give the details in the following and prove that under the current conditions, the required test functions can be found. Before we can do so, we have to control $\bar\theta$ on $\bar N$ given the validity of \eqref{eq:Nestimatedbytheta}.

\begin{lemma}\label{lem:thetabarest}
Suppose that equation \eqref{eq:Nestimatedbytheta} holds on $\bar{N}$.
Then for all $s\in [0,\La)$,
\eq{\frac{(n^{-1}+C_R+c_{\mr{\bar{\chi}}}^2)^{-1}}{(n^{-1}+C_R+c_{\mr{\bar{\chi}}}^2)^{-1}+s} \leq\bar{\theta}(s,\cdot)\leq \frac{(n^{-1}+c_R)^{-1}}{(n^{-1}+c_R)^{-1}+s}.}
\end{lemma}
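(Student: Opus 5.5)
The plan is to integrate the Raychaudhuri equation (\autoref{prop:optical}) along each null generator $s\mapsto(s,z)$ of $\bar N$ and compare with explicit ODE solutions. Fix $z\in\cS_0$ and set $y(s)=\bar\theta(s,z)$, so that $\bar L\bar\theta=y'(s)$ because $\del_s=\bar L$ in the canonical splitting. By the normalisation of the null cone, $y(0)=1$. \autoref{prop:optical} gives
\[
y'=-\abs{\mr{\bar\chi}}^{2}-\tfr1n y^{2}-\ov{\Rc}(\bar L,\bar L).
\]
Inserting \eqref{eq:Nestimatedbytheta} yields the two-sided Riccati inequality
\[
-(n^{-1}+C_R+c_{\mr{\bar\chi}}^2)\,y^2\leq y'\leq -(n^{-1}+c_R)\,y^2 .
\]
For the upper bound we used $\abs{\mr{\bar\chi}}^2\geq0$ and $\ov{\Rc}(\bar L,\bar L)\geq c_R y^2$. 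For the lower bound we used $\abs{\mr{\bar\chi}}^2\leq c_{\mr{\bar\chi}}^2y^2$ and $\ov{\Rc}(\bar L,\bar L)\leq C_Ry^2$.

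Next I would check that $y$ stays positive on all of $[0,\La)$. The upper inequality shows $y$ is nonincreasing. The lower inequality is what prevents $y$ from reaching zero: if $s_0$ were a first zero, then $y\equiv0$ would be a solution of $y'=-ay^2$ through $(s_0,0)$, and ODE comparison (uniqueness for the locally Lipschitz right-hand side) would force $y\equiv0$ on $[0,s_0]$, contradicting $y(0)=1$. Alternatively, positivity follows from the assumption that $\bar N$ is a null cone on $\cS_0$, where $\bar\theta>0$. The constants in \eqref{eq:Nestimatedbytheta} already implicitly use $\bar\theta>0$.

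With $y>0$ we may pass to $w=1/y$, which satisfies $w'=-y'/y^2$. The differential inequality becomes
\[
n^{-1}+c_R\leq w'\leq n^{-1}+C_R+c_{\mr{\bar\chi}}^2, \qquad w(0)=1.
\]
Integrating from $0$ to $s$ gives
\[
1+(n^{-1}+c_R)s\leq \frac{1}{\bar\theta(s,z)}\leq 1+(n^{-1}+C_R+c_{\mr{\bar\chi}}^2)s .
\]
Inverting and writing $\frac{1}{1+as}=\frac{a^{-1}}{a^{-1}+s}$ produces exactly the claimed bounds.

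The only delicate point is the positivity of $\bar\theta$ along the whole generator, which is needed to divide by $y$. I expect it to be handled by the comparison argument above, since the lower Riccati bound $y'\geq -a y^2$ keeps $y\geq 1/(1+as)>0$ as long as $y$ is positive. A continuity argument starting from $y(0)=1$ therefore closes the loop.
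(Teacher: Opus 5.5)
Your proposal is correct and follows the paper's proof essentially verbatim: insert \eqref{eq:Nestimatedbytheta} into the Raychaudhuri equation of \autoref{prop:optical} to obtain the two-sided Riccati inequality, pass to bounds on $\partial_s(\bar\theta^{-1})$, and integrate from $\bar\theta(0,\cdot)=1$. Your explicit argument that $\bar\theta$ stays positive along each generator is a useful addition. The paper divides by $\bar\theta^{2}$ without comment, relying on the standing assumption that $\bar N$ is a null cone with $\bar\theta>0$.
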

\begin{proof}
By the Raychaudhuri equations in \autoref{prop:optical} we know that
\eq{-(n^{-1}+C_R+c_{\mr{\bar{\chi}}}^2) \bar{\theta}^2\leq\partial_s (\bar\theta)=\bar{L}(\bar{\theta})= -\tfrac 1 n \bar{\theta}^2-|\mr{\bar\chi}|^2-\ov{\Rc}(\bar{L},\bar{L})\leq -(n^{-1}+c_R) \bar{\theta}^2.}
Hence,
\eq{(n^{-1}+c_R) \leq\partial_s (\bar{\theta}^{-1})\leq (n^{-1}+C_R+c_{\mr{\bar{\chi}}}^2).}
As $\bar{\theta}(0,\cdot)=1$, by integrating we obtain
\eq{\frac{(n^{-1}+C_R+c_{\mr{\bar{\chi}}}^2)^{-1}}{(n^{-1}+C_R+c_{\mr{\bar{\chi}}}^2)^{-1}+s} \leq\bar{\theta}(s,\cdot)\leq \frac{(n^{-1}+c_R)^{-1}}{(n^{-1}+c_R)^{-1}+s}.}
\end{proof}

Given this control on $\bar\theta$, we can solve \eqref{eq:KeyInequality}.

\begin{lemma}\label{lem:ODIsolution}
Suppose the validity of \eqref{eq:Nestimatedbytheta} on $\bar{N} = [0,\La)\x \cS_{0}$
and suppose that we are given an interval $[a,b]\subset [0,\Lambda)$. Additionally, suppose that one of the following two cases hold: 
\begin{enumerate}
\item Either $n\leq 6$ and
\eq{c_{\mr{\bar{\chi}}}\leq \frac{c_R}{2}+\frac{6-n}{4n},\label{eq:smallnassumption}}
\item or $n\geq 7$ and 
\eq{2c_{\mr{\bar{\chi}}}+c_{\mr{\bar{\chi}}}^2+C_R\leq \frac{n-6}{2n}.\label{eq:largenassumption}}
\end{enumerate}
Then there is an explicit constant $\mathcal{D}(n,c_R,c_{\mr{\bar{\chi}}}, C_R-c_R)$ which is smooth in its last three entries where \[\mathcal{D}(n,c_R,0,0)= \tfrac{n^2}{(1+nc_R)^2}\left(\tfrac{(n-2)(n-10)}{4n^2} + \tfrac{n+6}{n}c_R+c_R^2\right)\ ,\] 
such that the following holds:
\begin{itemize}
\item If $\mathcal{D}\geq 0$ then there is a positive solution $\mu\cn [a,b]\ra \mathbb{R}$ to $\eqref{eq:KeyInequality}$.
\item If $\mathcal{D}< 0$ and $b<\frac{n}{(1+nc_R)}\left(e^{\frac{\pi}{\sqrt{-\mathcal{D}}}}-1\right)$, then there exists a positive solution $\mu\cn [a,b]\ra \mathbb{R}$ to \eqref{eq:KeyInequality}.
\end{itemize}

\end{lemma}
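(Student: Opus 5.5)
We reduce \eqref{eq:KeyInequality} to a scalar ODE in $s$ with explicit coefficients, by bounding $\Phi$ and $\Psi$ through $\bar\theta$ and then controlling $\bar\theta(s)$ via \autoref{lem:thetabarest}. We look for $\mu$ with $\mu'\leq 0$; the sign of $\mu'$ then tells us which side of the bound on $\Psi$ we need.

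\textbf{Bounds on $\Phi$ and $\Psi$.} Using \eqref{eq:Nestimatedbytheta}, $|\om^k\om^j u^{-1}|\leq 2$ (since $u=\tfrac12|\n\om|^2$) and $|\mr{\bar\chi}_{lj}\mr{\bar\chi}^l_k\om^j\om^k u^{-1}|\leq 2|\mr{\bar\chi}|^2$, we get
\[\Phi\leq \bar\theta^2\Big(\tfrac1{n^2}-\tfrac{c_R}{2}+\tfrac32 c_{\mr{\bar\chi}}^2+\tfrac{|n-4|}{2n}c_{\mr{\bar\chi}}\Big)=:A\,\bar\theta^2,\]
\[\Psi\geq \bar\theta\Big(\tfrac{n-4}{2n}-2c_{\mr{\bar\chi}}\Big)=:P\,\bar\theta .\]
With $\mu'\leq0$ this gives $\Psi\mu'\leq P\bar\theta\mu'$. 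If $A\leq 0$ we may also need the lower bound $\bar\theta\geq\underline\theta(s)$; if $A>0$ we use $\bar\theta\leq \bar\theta_{\max}(s)=\frac{m}{m+s}$ with $m=(n^{-1}+c_R)^{-1}$. The regime conditions \eqref{eq:smallnassumption} and \eqref{eq:largenassumption} should be exactly what fixes the sign of $P$ and of the combination below, so that only one-sided bounds on $\bar\theta$ are needed. The leading case $c_{\mr{\bar\chi}}=0$, $C_R=c_R$ fixes $\bar\theta=\frac{m}{m+s}$, and the general constants are perturbations of it; this is why $\mathcal D$ depends smoothly on $c_{\mr{\bar\chi}}$ and $C_R-c_R$.

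\textbf{Reduction to an Euler equation.} In the leading case it suffices that
\[\mu''+\frac{Pm}{m+s}\mu'+\frac{Am^2}{(m+s)^2}\mu<0 .\]
Substituting $r=\log\frac{m+s}{m}$ and $\mu(s)=v(r)$ gives $\mu'=v'/(m+s)$ and $\mu''=(v''-v')/(m+s)^2$, so the inequality becomes the constant-coefficient condition
\[v''+(Pm-1)v'+Am^2v<0 .\]
Its characteristic discriminant $(Pm-1)^2-4Am^2$, normalised appropriately, should reproduce $\mathcal D$. As a check, for $c_{\mr{\bar\chi}}=0$ we have $m=\frac{n}{1+nc_R}$, $Pm-1=\frac{n-4}{2(1+nc_R)}-1$ and $Am^2=\frac{1-n^2c_R/2}{(1+nc_R)^2}$, and expanding gives the stated quadratic in $c_R$ with constant term $\frac{(n-2)(n-10)}{4n^2}$.

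\textbf{Case $\mathcal D\geq0$.} The roots are real, so we take $v=e^{\gamma r}$ with $\gamma$ strictly between them, or at the double root, perturbed slightly to get strict inequality. Then $v>0$ on all of $[a,b]$. The sign $\mu'\leq0$ used above must hold, and arranging this, or replacing it by the appropriate opposite bound on $\Psi$ for the root chosen, is where the regime conditions \eqref{eq:smallnassumption}/\eqref{eq:largenassumption} and the hypothesis $c_{\mr{\bar\chi}}+C_R<C_0+C_1c_R$ enter.

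\textbf{Case $\mathcal D<0$.} Here the solutions are $e^{-(Pm-1)r/2}\cos\big(\tfrac{\sqrt{-\mathcal D}}{2}\,r\cdots\big)$, up to the normalising factor. A shifted cosine is positive on an $r$-interval of length just under $\pi/\sqrt{-\mathcal D}$ (after normalisation). Perturbing the frequency slightly gives strict inequality. Positivity on $[0,\log\frac{m+b}{m}]$ then translates exactly into $b<m\big(e^{\pi/\sqrt{-\mathcal D}}-1\big)$.

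\textbf{Main obstacle.} The hard step is the bookkeeping in the first step: choosing the signs and the correct one-sided bounds on $\bar\theta$ when $c_{\mr{\bar\chi}}\neq0$ or $C_R\neq c_R$. I would handle this by bounding $\bar\theta$ above and below by the explicit profiles from \autoref{lem:thetabarest}, and absorbing the mismatch between the two profiles into the perturbed constant $\mathcal D$, keeping careful track of its smooth dependence on the last three entries.
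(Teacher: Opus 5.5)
There is a genuine gap. Your argument is complete only in the exactly solvable case $c_{\mr{\bar\chi}}=0$, $C_R=c_R$. There $\bar\theta=\frac{m}{m+s}$, $\Phi=A\bar\theta^2$ and $\Psi=P\bar\theta$ hold with equality, so the sign of $\mu'$ plays no role, and your value of $\mathcal D(n,c_R,0,0)$ is correct.

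The lemma is not a perturbative statement, though. For $n\le 6$ the hypothesis allows $c_{\mr{\bar\chi}}$ up to $\frac{c_R}{2}+\frac{6-n}{4n}$ and places no restriction on $C_R-c_R$. The one concrete choice you make for the general case, $\mu'\le 0$ throughout, fails for $n\le 6$. Whatever one-sided bounds on $\bar\theta$ you insert, the comparison operator in $r=\log(1+s/m)$ is $v''+(B-1)v'+A_0v$ with $B\le 1$ and $A_0=m^2A$. If $A_0>0$, both characteristic roots are positive. Moreover, any $v>0$ with $v'\le 0$ and $v''+(B-1)v'+A_0v<0$ satisfies $v''+A_0v<(1-B)v'\le 0$, which by Sturm comparison cannot hold on an $r$-interval of length $\ge \pi/\sqrt{A_0}$. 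So your reduction cannot give the claim for $\mathcal D\ge 0$ with arbitrary $b<\Lambda$. In the oscillatory case, the arc on which $\mu'\le 0$ is strictly shorter than $\pi/\sqrt{-\mathcal D}$ when $B<1$.

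The missing idea is to split by dimension:
\begin{itemize}
\item for $n\le 6$, take $\mu'>0$ together with the upper bound $\Psi\le(\frac{n-4}{2n}+2c_{\mr{\bar\chi}})\bar\theta$;
\item for $n\ge 7$, take $\mu'<0$ together with the lower bound $\Psi\ge(\frac{n-4}{2n}-2c_{\mr{\bar\chi}})\bar\theta$.
\end{itemize}
With $W=m$ and $Z=(n^{-1}+C_R+c_{\mr{\bar\chi}}^2)^{-1}$, the regime conditions are then exactly $B_1=W(\frac{n-4}{2n}+2c_{\mr{\bar\chi}})\le 1$ (with $Z$ in place of $W$ if the bracket is negative), respectively $B_2=Z(\frac{n-4}{2n}-2c_{\mr{\bar\chi}})\ge 1$. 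This is precisely what makes the Euler solutions monotone in the direction compatible with the chosen bound on $\Psi$.

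The second missing ingredient is how to get an Euler--Cauchy comparison equation at all in the general case. The two profiles from \autoref{lem:thetabarest}, $\frac{Z}{s+Z}$ and $\frac{W}{s+W}$, have different centres, and ``absorbing the mismatch into $\mathcal D$'' is not a construction. The paper weakens the lower bound to the same centre, $Z\le (s+Z)\bar\theta\le (s+W)\bar\theta\le W$. Then $\Phi\le A_0/(s+W)^2$, and $(s+W)\Psi$ is compared with the constant $B_i$ from the correct side. This defines $\mathcal D=(B_i-1)^2-4A_0$ explicitly. Strictness comes from solving with $A_0+\delta$, which gives $\Phi\mu+\Psi\mu'+\mu''\le -\delta\mu/(s+W)^2$.

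Finally, since $\mu'$ must keep its sign on all of $[a,b]$, in the oscillatory case you may only use an arc on which the trigonometric factor is monotone. The paper uses $\sin(\tfrac{\sqrt{-\mathcal D}}{2}r+\eta)$ with argument running from $\eta$ to $\tfrac{\pi}{2}$, and that is the source of the threshold $r<\pi/\sqrt{-\mathcal D}$. Your justification via positivity of $\cos(\tfrac{\sqrt{-\mathcal D}}{2}r+\cdot)$ is off by a factor $2$, since that function is positive on a length $2\pi/\sqrt{-\mathcal D}$, and positivity is not the binding constraint anyway. Also, the condition $c_{\mr{\bar\chi}}+C_R<C_0+C_1c_R$ is a hypothesis of \autoref{thm:PrescribedMeanCurvature}, not of this lemma.
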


\begin{proof}
Our aim will be to solve \eqref{eq:KeyInequality} by comparing this with solutions of the Euler-Cauchy equations, using our estimates on $\bar{\theta}$ from \autoref{lem:thetabarest} to estimate the coefficients in \eqref{eq:KeyInequality}. 

If $\Phi<0$ then it suffices to pick $\mu=1$. Hence, from now on we assume that there are points with $\Phi\geq 0$.
Set
\eq{W:=(n^{-1}+c_{R})^{-1}=\frac{n}{1+nc_R},\qquad Z:=(n^{-1}+C_R+c_{\mr{\bar{\chi}}}^2)^{-1}=\frac{n}{1+n(C_R+c_{\mr{\bar{\chi}}}^2)}\leq W}
so that from \autoref{lem:thetabarest} we see that
\eq{Z\leq (s+Z)\bar\theta\leq (s+W) \bar{\theta}\leq W.\label{eq:GetRidofthetabar}}

Suppose first that $n\leq 6$. We define the constants
\eq{B_1:=\begin{cases} W\left(\frac{n-4}{2n}+2c_{\mr{\bar{\chi}}}\right)& \text{if }\frac{n-4}{2n}+2c_{\mr{\bar{\chi}}}>0\\ Z\left(\frac{n-4}{2n}+2c_{\mr{\bar{\chi}}}\right)&\text{otherwise}, \end{cases}}
\eq{A_\delta:=W^2\left(n^{-2}-\frac{c_R}{2}+\frac{3}{2}c_{\mr{\bar{\chi}}}^2+\frac{|4-n|}{2n}c_{\mr{\bar{\chi}}}\right)+\delta,}
where $\delta\geq 0$ will be chosen later. Note that $A_{\de}$ has been chosen so that
\eq{\Phi\leq \frac{A_{\de}-\delta}{(s+W)^2},}
so as $\Phi\geq 0$, we see that $A_\delta\geq \delta$.
We will shortly find $\mu = \mu(s)>0$ which solves
\eq{0=\frac{A_\delta}{(s+W)^2}\mu + \frac{B_1}{s+W} \mu' +\mu''.\label{eq:EulerCauchyEq}}
Given such $\mu$, by our choice of $A_{\de}$ and the positivity of $\mu$, we then estimate
\eq{\Phi \mu+\Psi \mu'+\mu'' \leq \frac{A_\delta-\delta}{(s+W)^2}\mu+\Psi \mu'+\mu''= ((s + W)\Psi-B_1)\frac{\mu'}{s+W}-\delta \frac{\mu}{(s+W)^2} .\label{eq:stepinproof1}}
Note that by our choice of $B_1$ and \eqref{eq:GetRidofthetabar}, 
\eq{(s + W)\Psi-B_1\leq (\tfrac{n-4}{2n}+2c_{\mr{\bar{\chi}}})(s+W)\bar{\theta}-B_1\leq 0.}
Hence, our aim is to show that we may find a solution $\mu$ to \eqref{eq:EulerCauchyEq} with $\delta>0$,  $\mu>0$ and $\mu'>0$, so that the right hand side of \eqref{eq:stepinproof1} is negative. 

We note that by our assumption \eqref{eq:smallnassumption}
\eq{B_1\leq (n^{-1}+c_R)W=1.}
We set $D_{1,\delta}=(B_1-1)^2-4A_\delta$ then:
\begin{itemize}
\item If $D_{1,\delta} \geq 0$, let $\mu\cn[0,\infty) \ra \mathbb{R}$ be given by
\eq{\mu(s)=(s+W)^{\frac 1 2(1-B_1+\sqrt{D_{1,\delta}})}}
which solves \eqref{eq:EulerCauchyEq} and satisfies the required properties $\mu>0$ and $\mu'>0$, because $B_1\leq 1$.
\item If $D_{1,\delta} <0$, then for some $\eta\in(0, \tfrac \pi 2)$ define 
\[I_{1,\delta,\eta}=[0,W(e^{(\pi-2\eta)/\sqrt{-D_{1,\delta}}}-1))\] 
and let $\mu\cn I_{1,\delta,\eta}\ra \mathbb{R}$ be given by
\eq{\mu(s)=(s+W)^{\frac 12(1-B_1)}\sin\left(\tfrac{\sqrt{-D_{1,\delta}}}{2}\log(W^{-1}s+1)+\eta\right),}
which solves \eqref{eq:EulerCauchyEq} and, as $B_1 \leq 1$, has both $\mu>0$ and $\mu'>0$ on $I_{1,\delta,\eta}$.
\end{itemize}
We now choose $\delta>0$ to ensure the strict inequality on the claimed interval. We define 
\[\mathcal{D}=\lim_{\delta\ra0} D_{1,\delta}=(B_1-1)^2-4A_0\] and note that $D_{1,\delta}$ is monotonically decreasing in $\delta$. 
\begin{itemize}
\item If $\mathcal{D}>0$, pick $\delta>0$ sufficiently small that $D_{1,\delta}>0$, then $\mu$ as described above will solve \eqref{eq:KeyInequality}. 
\item If $\cD\leq 0$, then by a suitable choice of $\de$ we can achieve $W(e^{\pi/\sqrt{-D_{1,\delta}}}-1)>b$. As the ends of the interval vary continuously with $\eta$, we may pick $\eta>0$ small enough so that $[0,b]\subset I_{1,\delta,\eta}$.  Given these choices, $\mu$ restricted to $[a,b]$ with $D_{1,\delta}<0$ as above satisfies \eqref{eq:KeyInequality} as required.
\end{itemize}

Now suppose that $n\geq 7$. Our aim is to follow an identical process with $A_\delta$, $W$ and $Z$ defined as above, 
but we will replace $B_1$ with
\eq{B_2:=Z\left(\frac{n-4}{2n}-2c_{\mr{\bar{\chi}}}\right).}
Similarly to the previous cases we will shortly choose $\mu>0$ to be a solution of 
\eq{0=\frac{A_\delta}{(s+W)^2}\mu + \frac{B_2}{s+W} \mu' +\mu'',\label{eq:EulerCauchyEq2}}
so that, as in \eqref{eq:stepinproof1},
\eq{\Phi \mu+\Psi \mu'+\mu'' \leq  ((s+W)\Psi-B_2)\frac{\mu'}{s+W}-\delta \frac{\mu}{(s+W)^2} .\label{eq:stepinproof2}}
Note that \eqref{eq:largenassumption} implies that $2c_{\mr{\bar{\chi}}}\leq \tfrac{n-4}{2n}$. Hence using equation \eqref{eq:GetRidofthetabar},
\eq{(s+W)\Psi-B_2\geq (\tfrac{n-4}{2n}-2c_{\mr{\bar{\chi}}})(s+W)\bar{\theta}-B_2\geq \left(\tfrac{n-4}{2n}-2c_{\mr{\bar{\chi}}}\right)Z-B_2=0.}
Therefore this time, we search for solutions of \eqref{eq:EulerCauchyEq2} with $\delta>0$, $\mu>0$ and $\mu'<0$ to render the right hand side of \eqref{eq:stepinproof1} negative. 

We note that \eqref{eq:largenassumption} implies
\eq{B_2\geq Z\left(\frac 1 n +c_{\mr{\bar{\chi}}}^2+C_R\right)=1.}
We set $D_{2,\delta}=(B_2-1)^2-4A_\delta$ then:
\begin{itemize}
\item If $D_{2,\delta} \geq 0$, let $\mu\cn[0,\infty) \ra \mathbb{R}$ be given by
\eq{\mu(s)=(s+W)^{\frac 1 2(1-B_2+\sqrt{D_{2,\delta}})},}
which solves \eqref{eq:EulerCauchyEq} and satisfies the required properties $\mu>0$ and $\mu'<0$.
\item If $D_{2,\delta} <0$, then for some $\eta\in (0, \tfrac\pi 2)$ define 
\[I_{2,\delta,\eta}=[0,W(e^{(\pi-2\eta)/\sqrt{-D_{1,\delta}}}-1))\] 
and let $\mu\cn I_{2,\delta,\eta}\ra \mathbb{R}$ be given by
\eq{\mu(s)=(s+W)^{\frac 12(1-B_2)}\cos\left(\tfrac{\sqrt{-D_{1,\delta}}}{2}\log(W^{-1}s+1)+\eta\right),}
which solves \eqref{eq:EulerCauchyEq} and, as $B_2\geq 1$, has both $\mu>0$ and $\mu'<0$ on the stated interval.
\end{itemize}
As in the previous case, set $\mathcal{D}:=\lim_{\delta\ra 0} D_{2,\delta}=(B_2-1)^2-4A_0$ and note that $D_{2,\delta}$ is monotonically decreasing in $\delta$. Picking $\delta,\eta>0$, a continuity argument as in the case $n\leq 6$ completes the proof.

Finally, we check that in either of the cases, $\mathcal{D}$ has the claimed form. Note that $W$, $Z$, $B_1$, $B_2$, $A_0$ are all smooth functions of $c_R$, $C_R-c_R$ and $c_{\mr{\bar{\chi}}}$ (for $0\leq c_R, C_R-c_R, c_{\mr{\bar{\chi}}}$), and hence, $\mathcal{D}$ is smooth in each of its last three entries. If $C_R-c_R=c_{\mr{\bar{\chi}}}=0$ then $W=Z=\frac{n}{1+nc_R}$, $B_1=B_2=\frac{n-4}{2n}W$ and $A_0=W^2(n^{-2}-\frac{c_R}{2})$. Hence
\begin{flalign*}
W^{-2}\mathcal{D} &= (W^{-1}B_1-W^{-1})^2-4W^{-2}A_0= (\tfrac{n-6}{2n} -c_R)^2-4n^{-2}+2c_R\\
&=\frac{(n-2)(n-10)}{4n^2}+\frac{6+n}{n}c_R+c_R^2,
\end{flalign*}
as claimed.
\end{proof}

We proceed with the $C^{1}$-estimates.

\begin{lemma}
Under the assumptions of \autoref{thm:PrescribedMeanCurvature}, the flow \eqref{eq:PMCF} with $\be = \rho/(2\bar\theta)$ satisfies uniform $C^{1}$-estimates during the evolution and, in turn, satisfies smooth estimates and converges to a solution to
\eq{\abs{\vec H}^{2} = \rho.}
\end{lemma}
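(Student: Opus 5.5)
The plan has three stages: confine the flow between two barriers, derive a gradient bound from \autoref{lem:GradEstTestFunctionHelperLemma} with the test function of \autoref{lem:ODIsolution}, and then get smooth estimates and convergence from monotonicity, as in \autoref{prop:SmallOscLTE}.

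\textbf{Step 1: barriers and monotonicity.} Let $f=\beta-H$. Since $\rho\leq\abs{\vec H}^2=2H\bar\theta$ on $\Sigma^+$, we have $f\leq 0$ initially. The evolution equation \eqref{eq:evolf} is linear in $f$, with coefficients that stay bounded as long as the flow stays in a compact set. Hence the maximum principle keeps $f\leq 0$, and $\omega$ is non-increasing in time. This gives $\omega\leq\sup_{\Sigma^+}\omega$ throughout.

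For the lower bound, $\cS_0$ (the graph $\omega\equiv0$) satisfies $\rho\geq\abs{\vec H}^2$. I would argue by first touching, as in \autoref{lem:SMPonomega} and \autoref{lem:ellipticSMP}. At a first time where $\omega$ touches $0$ from above, we have $\dot\omega\leq0$ there, while $\dot\omega=\tfrac{1}{2\bar\theta}(\rho-\abs{\vec H}^2_{\mathrm{graph}})$. Comparing with $\cS_0$ through the elliptic difference operator gives a contradiction, via the strong maximum principle.

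Thus $\omega(t,\cdot)\in[0,b]$ with $b=\sup_{\Sigma^+}\omega<\Lambda$. The flow therefore stays in the compact set $[0,b]\times\cS_0$, so $\beta$, $d\beta$ and $\bar L\beta$ are uniformly bounded. These bounds control the $\cO$-terms in \autoref{lem:GradEstTestFunctionHelperLemma}.

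\textbf{Step 2: the gradient bound.} This is the main point. I first need to check that the hypotheses of \autoref{thm:PrescribedMeanCurvature}, namely $c_{\mr{\bar\chi}}+C_R<C_0+C_1c_R$ with suitable $C_0(n),C_1(n)$, imply either \eqref{eq:smallnassumption} or \eqref{eq:largenassumption}. This should amount to choosing $C_0,C_1$ appropriately; for $n\geq7$ the quadratic term $c_{\mr{\bar\chi}}^2$ must be absorbed by a smallness condition, so $C_1$ may need to be $0$ there.

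With this in hand, \autoref{lem:ODIsolution} applied on $[a,b]=[0,b]$ produces $\mu>0$ with $\Phi\mu+\Psi\mu'+\mu''<0$. If $\cD<0$, this uses the assumed bound on $\sup_{\Sigma^+}\omega$. One caveat is that $\Phi$ and $\Psi$ depend on the direction $\om^k/\abs{\n\om}$. However, the estimates used in \autoref{lem:ODIsolution} are uniform over directions, since $\abs{\mr{\bar\chi}_{kj}\om^k\om^j}u^{-1}\leq 2\abs{\mr{\bar\chi}}$. By compactness of $[0,b]$ we then get
\[
\Phi\mu+\Psi\mu'+\mu''\leq-\varepsilon<0 .
\]

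Setting $\phi=u\mu^{-2}(\omega)$, \autoref{lem:GradEstTestFunctionHelperLemma} gives
\[
\dot\phi-\De\phi\leq-4\varepsilon(\inf\mu)\phi^2+C(1+\phi^{3/2})
\]
at a positive maximum. Hence $\phi$, and therefore $u$, is bounded by a constant depending on the initial data and the constants above.

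\textbf{Step 3: regularity and convergence.} Since the graph equation for $\omega$ is uniformly parabolic once $u$ is bounded, Krylov--Safonov, Schauder theory and bootstrapping give uniform $C^{k}$ estimates in time, as in \autoref{prop:SmallOscLTE}. So the flow exists for all time. Because $\omega$ is monotone in $t$ and bounded below, it converges; by the uniform estimates and interpolation the convergence is smooth. The limit is stationary, so $f=0$, i.e.\ $\abs{\vec H}^2=2\bar\theta H=\rho$.

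I expect the main obstacle to be Step 2. One part is verifying that the direction-dependent terms in $\Phi$ and $\Psi$ are dominated uniformly, so the ODE inequality from \autoref{lem:ODIsolution} applies pointwise along the flow. The other is matching the theorem's hypothesis $c_{\mr{\bar\chi}}+C_R<C_0+C_1c_R$ to the two case conditions of that lemma.
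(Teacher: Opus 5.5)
Your proposal is correct and follows the paper's proof essentially step by step. The steps are: $\cS_0$ and $\Sigma^+$ as barriers via the maximum principle, and the test function from \autoref{lem:ODIsolution} inserted into \autoref{lem:GradEstTestFunctionHelperLemma} for the $C^1$ bound, followed by bootstrapping and monotone convergence to a stationary limit; your extra checks (the choice of $C_0,C_1$, and the direction-uniform bound $\abs{\mr{\bar\chi}_{kj}\om^k\om^j}u^{-1}\leq 2\abs{\mr{\bar\chi}}$) only make explicit what the paper leaves implicit, and for the lower barrier the weak parabolic comparison principle is what you actually need, since a first touching point by itself gives no contradiction.
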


\pf{We pick $C_0(n)$ and $C_1(n)$ so that conditions \eqref{eq:smallnassumption} and \eqref{eq:largenassumption} in \autoref{lem:ODIsolution} hold. 
By the maximum principle, the flow remains between $\mathcal{S}_0$ and $\Sigma^+$. 
The assumption on $\Si^{+}$ is precisely there to ensure that, using \autoref{lem:ODIsolution}, we can build a test function $\phi = u\mu^{-2}(\om)$ as in \autoref{lem:GradEstTestFunctionHelperLemma}, which ensures a strictly negative sign on the highest order term in the evolution equation of $\phi$, which in turn yields
a gradient bound on the flowing manifolds. Here we also crucially use that the flow remains in the region between $\cS_{0}$ and $\Si^{+}$. Using the $C^{1}$-bounds, standard bootstrapping gives smooth estimates and, from the monotonicity of the flow, converges to a stationary limit of the flow. 
}

\begin{rem}\label{rmk:foliationsize}
Suppose that $\mathcal{S}_0$ is a MOTS. Then for $c_{\mr{\bar{\chi}}}, c_R, C_R, \mathcal{D}$ as in \autoref{thm:PrescribedMeanCurvature}. Define 
\eq{M=\begin{cases}
\Lambda &\text{ if }\mathcal{D}\geq 0\\
\min\left\{\frac{n}{1+nc_R}(e^\frac{\pi}{\sqrt{-\mathcal{D}}}-1), \Lambda\right\} & \text{ otherwise.}
\end{cases}}
Then, either
\begin{itemize}
\item the foliation of \autoref{thm:FoliationExistence} leaves every compact subset of $\mathcal{S}_0\times[0,M)$, or
\item there is a smooth unstable $\sigma$-STCMC $\Sigma_\sigma\subset \mathcal{S}_0\times[0,M)$ and the region between $\mathcal{S}_0$ and $\Sigma_\sigma$ is smoothly foliated by STCMC hypersurfaces.
\end{itemize}
\end{rem}
\begin{proof}
In the construction, replace \autoref{prop:SmallOscLTE}(i) with the following gradient estimate, which depends only on $c_{\mr{\bar{\chi}}}, C_R, C_R-c_R, \mathcal{D}$, but is independent of initial data.

As in the previous Lemma, we can build a test function $\phi = u\mu^{-2}(\om)$ as in \autoref{lem:GradEstTestFunctionHelperLemma}, with evolution given by
\[\dot{\phi} - \Delta \phi = -2c\phi^2+C(\phi^\frac 32+1)\ . \]
for some constants $c,C>0$ depending only on $c_{\mr{\bar{\chi}}}, C_R, C_R-c_R, \mathcal{D}$. We start by estimating $\phi$ for small times. By ODE comparison, we have $\phi\leq \Theta(t)$ where 
\[\dot\Theta\leq -2c\Theta^2+C(\Theta^\frac 32+1)\leq -c\Theta^2+C\]
 and $\Theta(0)=\max_{M_0} \phi$. Define $\tilde{\Theta}:=\max\left\{0, \Theta - 2\sqrt{\frac{C}{2c}}\right\}$ so that when the derivative exists,
 \eq{\partial_t\tilde{\Theta}\leq \begin{cases}
 -2c\Theta^2 +C, & \text{if}~\Theta>2\sqrt{\frac{C}{2c}}\\
 0&\text{otherwise}
 \end{cases} 
 &\leq\begin{cases}
 -c\Theta^2, & \text{if}~\Theta>2\sqrt{\frac{C}{2c}}\\
 0&\text{otherwise}
 \end{cases}\\
 &\leq \begin{cases}
 -c\tilde{\Theta}^2, &\text{if}~ \Theta>2\sqrt{\frac{C}{2c}}\\
 0&\text{otherwise}
 \end{cases}\\
 &=-c\tilde{\Theta}^2,}
 so solving this we may estimate
\[\phi(\cdot,t)\leq 2\sqrt{\frac{C}{2c}} + \frac{1}{ct},\]
independently of initial data. Therefore, we have an estimate on $\phi$ at time $t=1$ which is independent of initial data. Applying the maximum principle beyond this point implies a bound on $\phi$ depending only on $c_{\mr{\bar{\chi}}}, C_R, C_R-c_R, \mathcal{D}$ for $t\geq 1$. Standard PDE estimates imply $C^k$ estimates on the flow. 

Hence the limit surfaces $\Sigma_\lambda$ satisfy uniform smooth estimates independently of initial data. 
Hence if the foliation doesn't leave every compact set of $\mathcal{S}_0\times [0,M)$, then all leaves satisfy uniform smooth bounds. Hence the only possibility is that the foliation terminates in a smooth unstable $\sigma$-STCMC hypersurface. 
\end{proof}

\providecommand{\bysame}{\leavevmode\hbox to3em{\hrulefill}\thinspace}
\providecommand{\MR}{\relax\ifhmode\unskip\space\fi MR }
\providecommand{\MRhref}[2]{%
  \href{http://www.ams.org/mathscinet-getitem?mr=#1}{#2}
}
\providecommand{\href}[2]{#2}


\begin{thebibliography}{10}

\bibitem{AnderssonMarsSimon:/2008}
Lars Andersson, Marc Mars, and Walter Simon, \emph{Stability of marginally
  outer trapped surfaces and existence of marginally outer trapped tubes}, Adv.
  Theo. Math. Phys. \textbf{12} (2008), no.~4, 853--888.

\bibitem{BourniMoore:/2019}
Theodora Bourni and Kristen Moore, \emph{Null mean curvature flow and outermost
  {MOTS}}, J. Differ. Geom. \textbf{111} (2019), no.~2, 191--239.

\bibitem{BrendleEichmair:09/2014}
Simon Brendle and Michael Eichmair, \emph{Large outlying stable constant mean
  curvature spheres in initial data sets}, Invent. Math. \textbf{197} (2014),
  no.~3, 663--682.

\bibitem{CederbaumSakovich:/2021}
Carla Cederbaum and Anna Sakovich, \emph{On center of mass and foliations by
  constant spacetime mean curvature surfaces for isolated systems in general
  relativity}, Calc. Var. Partial Differ. Equ. \textbf{60} (2021), no.~6, art.
  214.

\bibitem{EichmairKoerber:11/2024}
Michael Eichmair and Thomas Koerber, \emph{Foliations of asymptotically flat
  manifolds by stable constant mean curvature spheres}, J. Differ. Geom.
  \textbf{128} (2024), no.~3, 1037--1083.

\bibitem{Gerhardt:12/1983}
Claus Gerhardt, \emph{H-surfaces in {L}orentzian manifolds}, Commun. Math.
  Phys. \textbf{89} (1983), no.~4, 523--553.

\bibitem{Gerhardt:/2000}
\bysame, \emph{Hypersurfaces of prescribed curvature in {L}orentzian
  manifolds}, Indiana Univ. Math. J. \textbf{49} (2000), no.~3, 1125--1153.

\bibitem{Gerhardt:09/2000}
\bysame, \emph{Hypersurfaces of prescribed mean curvature in {L}orentzian
  manifolds}, Math. Z. \textbf{235} (2000), no.~1, 83--97.

\bibitem{Gerhardt:/2006d}
\bysame, \emph{On the {CMC} foliation of future ends of a spacetime}, Pac. J.
  Math. \textbf{226} (2006), no.~2, 297--308.

\bibitem{Huang:12/2010}
Lan-Hsuan Huang, \emph{Foliations by stable spheres with constant mean
  curvature for isolated systems with general asymptotics}, Commun. Math. Phys.
  \textbf{300} (2010), no.~2, 331--373.

\bibitem{HuiskenWolff:07/2025}
Gerhard Huisken and Markus Wolff, \emph{On the evolution of hypersurfaces along
  their inverse space-time mean curvature}, J. Differ. Geom. \textbf{130}
  (2025), no.~3, 571--633.

\bibitem{HuiskenYau:/1996}
Gerhard Huisken and Shing-Tung Yau, \emph{Definition of center of mass for
  isolated physical systems and unique foliations by stable spheres with
  constant mean curvatre}, Invent. Math. \textbf{124} (1996), 281--311.

\bibitem{KronckeWolff:12/2024}
Klaus Kr\"oncke and Markus Wolff, \emph{Foliations of asymptotically
  {S}chwarzschildean lightcones by surfaces of constant spacetime mean
  curvature}, {\href{https://arxiv.org/abs/2412.17563}{arxiv:2412.17563}}, 12
  2024.

\bibitem{Lieberman:/1998}
Gary Lieberman, \emph{Second order parabolic differential equations}, World
  Scientific, Singapore, 1998.

\bibitem{Ma:10/2011}
Shiguang Ma, \emph{Uniqueness of the foliations of constant mean curvature
  spheres in asymptotically flat 3-manifolds}, Pac. J. Math. \textbf{252}
  (2011), no.~1, 145--179.

\bibitem{ONeill:/1983}
Barrett O'Neill, \emph{Semi-{R}iemannian geometry with applications to
  relativity}, Pure and applied mathematics, vol. 103, Academic Press, San
  Diego, 1983.

\bibitem{RoeschScheuer:02/2022}
Henri Roesch and Julian Scheuer, \emph{Mean curvature flow in null
  hypersurfaces and the detection of {MOTS}}, Commun. Math. Phys. \textbf{390}
  (2022), no.~3, 1149--1173.

\bibitem{Tod:/1991}
Paul Tod, \emph{Looking for marginally trapped surfaces}, Class. Quantum Grav.
  \textbf{8} (1991), no.~5, 115--118.

\bibitem{Wolff:04/2023}
Markus Wolff, \emph{Ricci flow on surfaces along the standard lightcone in the
  $3+1$-{M}inkowski spacetime}, Calc. Var. Partial Differ. Equ. \textbf{62}
  (2023), no.~3, art. 90.

\end{thebibliography}

\end{document}